\newtheorem{theorem}{Theorem}[section]
\newtheorem{corollary}[theorem]{Corollary}
\newtheorem{proposition}[theorem]{Proposition}
\newtheorem{lemma}[theorem]{Lemma}
\newtheorem{conjecture}[theorem]{Conjecture}
\theoremstyle{definition}
\newtheorem{definition}[theorem]{Definition}
\newtheorem{construction}[theorem]{Construction}
\newtheorem{ex}[theorem]{Example}
\newtheorem{notation}[theorem]{Notation}
\theoremstyle{remark}
\newtheorem{remark}[theorem]{Remark}
\definecolor{energy}{RGB}{114,0,172}
\definecolor{freq}{RGB}{45,177,93}
\definecolor{spin}{RGB}{251,0,29}
\definecolor{signal}{RGB}{203,23,206}
\definecolor{circle}{RGB}{217,86,16}
\definecolor{average}{RGB}{203,23,206}
\colorlet{shadecolor}{gray!20}
\pgfplotsset{compat=1.9}
\newcommand{\RR}{\mathbb{R}}
\newcommand{\ZZ}{\mathbb{Z}}
\newcommand{\GG}{\mathbb{G}}
\newcommand{\PP}{\mathbb{P}}
\newcommand{\bR}{\mathbf{R}}
\newcommand{\cA}{\mathcal{A}}
\newcommand{\cC}{\mathcal{C}}
\newcommand{\cE}{\mathcal{E}}
\newcommand{\cF}{\mathcal{F}}
\newcommand{\cG}{\mathcal{G}}
\newcommand{\cL}{\mathcal{L}}
\newcommand{\cM}{\mathcal{M}}
\newcommand{\cN}{\mathcal{N}}
\newcommand{\cO}{\mathcal{O}}
\newcommand{\cP}{\mathcal{P}}
\newcommand{\sC}{\mathscr{C}}
\newcommand{\sI}{\mathscr{I}}
\newcommand{\sX}{\mathscr{X}}
\newcommand{\Aut}{\operatorname{Aut}}
\newcommand{\Br}{\operatorname{Br}}
\newcommand{\cha}{\operatorname{char}}
\newcommand{\Coh}{\operatorname{Coh}}
\newcommand{\End}{\operatorname{End}}
\newcommand{\cEnd}{\mathscr{E}\text{\kern -3pt {\calligra\large nd}}\,}
\newcommand{\et}{\operatorname{\acute{e}t}}
\newcommand{\Ext}{\operatorname{Ext}}
\newcommand{\Gal}{\operatorname{Gal}}
\newcommand{\GL}{\operatorname{GL}}
\newcommand{\Hilb}{\operatorname{Hilb}}
\newcommand{\Hom}{\operatorname{Hom}}
\newcommand{\cHom}{\mathscr{H}\text{\kern -3pt {\calligra\large om}}\,}
\newcommand{\id}{\operatorname{id}}
\newcommand{\ind}{\operatorname{ind}}
\newcommand{\Mix}{\operatorname{Mix}}
\newcommand{\Norm}{\operatorname{Norm}}
\newcommand{\per}{\operatorname{per}}
\newcommand{\Pic}{\operatorname{Pic}}
\newcommand{\Prym}{\operatorname{Prym}}
\newcommand{\PGL}{\operatorname{PGL}}
\newcommand{\Spec}{\operatorname{Spec}}
\newcommand{\tw}{\operatorname{tw}}
\newcommand{\univ}{\operatorname{univ}}
\author[Ting Gong]{Ting Gong}
\email{tgong2@uw.edu}
\title[moduli of twisted vector bundles]{Moduli of vector bundles on $\mu_n$-gerbes over genus 2 curves and the period-index problem}
\begin{document}
\maketitle

\begin{abstract}
We develop a framework for describing vector bundles on $\mu_n$-gerbes over curves and illustrate the construction through two detailed examples.  Using the interpretation of Brauer classes as obstructions to descending determinantal line bundles from the algebraic closure, together with a geometric analysis of the moduli space of twisted sheaves, we prove that for genus $2$ curves there exist Brauer classes over the base field whose period equals their index.  Over $C_1$-fields, we further show that every $2$-torsion class in the Brauer group of a genus-$2$ curve satisfies the period–index problem.  As an application, we construct higher-dimensional varieties obtained as fibre products of genus $2$ curves over $C_1$-fields whose $2$-torsion algebraic Brauer classes also satisfy the period–index problem, providing new evidence toward the period–index conjecture.
\end{abstract}

\section{Introduction}
\subsection{Moduli problems}
The moduli of vector bundles on curves has been a central topic in algebraic geometry since its early development by authors such as Seshadri, Ramanan, Narasimhan, Mumford, and Newstead. Explicit computations for low-rank moduli spaces were carried out in the works of Narasimhan–Ramanan \cite{Narasimhan_Ramanan_69}, Newstead \cite{Newstead_68}, and Desale–Ramanan \cite{Desale_Ramanan_76}, among others. These computations revealed a close relationship between the moduli space of semistable vector bundles of fixed rank and determinant and certain universal line bundles on the moduli space, notably the Theta divisor. This perspective was further developed in the works of Beauville, Narasimhan, and Ramanan \cite{Beauville_88, Beauville_Narasimhan_Ramanan_89, Beauville_94}.

When the base field is not algebraically closed, in studying the moduli problem of Azumaya algebras, Lieblich \cite{Lieblich_07} constructed the moduli space of twisted sheaves and applied it to several cases of the period–index problem, thereby extending earlier work such as that of Căldăraru \cite{Caldararu_00} and Yoshioka \cite{Yoshioka_06}. 

In this work, we revisit the developments discussed above and apply their techniques to explicitly describe the moduli space of semistable twisted vector bundles of rank~$2$ with trivial determinant as twisted linear systems associated to the generalized Theta divisor in the sense of Kollár~\cite{Kollar_16}. Our approach proceeds by studying the determinantal line bundle on twisted moduli spaces. In particular, we show that 

\begin{theorem}\label{Intro1}
    Let $k$ be a field of characteristic not equal to $2$, and let $C$ be a smooth projective curve of genus~$2$ over~$k$. Let $\alpha \in H^2(C, \mu_2)$ be a $\mu_2$-gerbe such that $\alpha_{\bar{k}} = 0$. Then the moduli space of semistable rank-$2$ vector bundles with trivial determinant twisted by $\alpha$, denoted $M^{ss,\alpha}_C(2,\cO_C)$, is isomorphic to the twisted linear system $|\Theta|$. In particular, $M^{ss,\alpha}_C(2,\cO_C)$ is a Brauer–Severi variety over $k$.
\end{theorem}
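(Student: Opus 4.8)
The plan is to descend the classical Narasimhan--Ramanan description of the moduli space from $\bar{k}$ to $k$, while tracking the effect of $\alpha$ as a Galois descent datum. First I would work over the algebraic closure. Since $\alpha_{\bar{k}} = 0$, the pullback gerbe $\cC_{\bar{k}} \to C_{\bar{k}}$ is trivial, so an $\alpha$-twisted sheaf on $C_{\bar{k}}$ is canonically an ordinary coherent sheaf; hence the base change $M^{ss,\alpha}_C(2,\cO_C)_{\bar{k}}$ is identified with the usual moduli space of semistable rank-$2$ bundles with trivial determinant on $C_{\bar{k}}$. By the theorem of Narasimhan--Ramanan \cite{Narasimhan_Ramanan_69}, for a genus-$2$ curve this space is isomorphic to $\mathbb{P}^3$, realized as the complete linear system of the generalized theta divisor $\Theta$ (the ample generator of its Picard group, in the sense of \cite{Kollar_16}), with the associated theta/determinant line bundle corresponding to $\cO_{\mathbb{P}^3}(1)$.

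Next I would upgrade this to a statement over $k$. The moduli functor of semistable $\alpha$-twisted sheaves of rank $2$ with trivial determinant is defined over $k$, so its coarse moduli space $M^{ss,\alpha}_C(2,\cO_C)$ is a $k$-scheme whose base change to $\bar{k}$ is $\mathbb{P}^3$; it is therefore a form of $\mathbb{P}^3$, i.e. a Brauer--Severi variety over $k$, which already yields the final sentence of the statement. To identify this variety with the twisted linear system $|\Theta|$ -- and thereby pin down its class in $\Br(k)$ -- one must compute the descent datum. Since $\Pic(\mathbb{P}^3_{\bar{k}}) = \ZZ$ and the class of a Brauer--Severi variety is exactly the obstruction to descending the ample generator $\cO(1)$, the problem reduces to computing the obstruction to descending the theta line bundle from $\bar{k}$.

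Here is where the twist enters. The theta line bundle on $M^{ss,\alpha}_C(2,\cO_C)_{\bar{k}}$ is built from a universal (Poincar\'e) family, which for the $\alpha$-twisted problem exists only as an $\alpha$-twisted object; forming its determinant along the curve feeds the gerbe class into the Galois descent datum. Concretely, I would exhibit the space of global sections $V := H^0(M^{ss,\alpha}_C(2,\cO_C)_{\bar{k}}, \Theta) \cong \bar{k}^{4}$ as a Galois-semilinear projective representation whose obstruction to linearization is the class determined by $\alpha$, using $\alpha_{\bar{k}} = 0$ to place $\alpha$ in the appropriate step of the Hochschild--Serre filtration of $H^2(C, \mu_2)$ and transport it to $\Br(k)$. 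The projectivization of $V$ equipped with this twisted semilinear action is, by definition, the twisted linear system $|\Theta|$, and matching it with the descent datum carried by $M^{ss,\alpha}_C(2,\cO_C)$ produces the asserted isomorphism.

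The main obstacle will be precisely this matching step: showing that the semilinear structure induced on the Narasimhan--Ramanan $\mathbb{P}^3$ by the twisted moduli interpretation coincides with the one defining $|\Theta|$, equivalently that the obstruction to descending the determinant line bundle is exactly $\alpha$ and not some other class in $\Br(k)$. This is a compatibility between the determinant construction for twisted sheaves and the cohomological description of the gerbe class, and I expect it to require a careful functorial analysis of the universal twisted family and its determinant, rather than any further input from the special geometry of genus-$2$ curves.
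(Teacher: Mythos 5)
Your proposal is correct and takes essentially the same route as the paper's proof (Theorem~\ref{coarseisobs} together with Lemma~\ref{linear}, Theorem~\ref{main1}, and Proposition~\ref{brobs}): pass to $\bar{k}$ where Narasimhan--Ramanan identifies the moduli space with $|\Theta_{\bar{k}}|\cong\PP^3$, then Galois-descend, identifying the Brauer--Severi class with the obstruction to descending the determinantal theta bundle in Koll\'ar's twisted-linear-system framework. The ``matching step'' you single out as the main obstacle is exactly what the paper carries out, by showing that tensoring by the $2$-torsion cocycle $\sigma\mapsto\cL_\sigma$ acts on $\Theta_{\bar{k}}$ through $\mu_2$-scalars, so that the theta map is equivariant for the two descent data and descends to the asserted isomorphism.
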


\begin{remark}
    We note that \cite{Iyer_Parimala_22} provides a description of the moduli space of twisted sheaves and establishes the period--index result only for locally essentially trivial Brauer classes.  Our Theorems~\ref{Intro1} and~\ref{intro5} are more general, as they apply to all Brauer classes.
\end{remark}

Having an explicit description of the moduli space allows us to classify vector bundles of fixed rank and degree on $\mu_n$-gerbes over curves. These moduli spaces naturally decompose into unions of components corresponding to moduli spaces of untwisted, twisted, and mixed vector bundles of the same rank and degree. As concrete illustrations, we compute the cases of rank-$2$ vector bundles on $\mu_2$-gerbes and rank-$3$ vector bundles on $\mu_3$-gerbes. Note that these computations work over any smooth projective varieties, and we restrict them to curves. 

\begin{theorem}
Let $k$ be a field of characteristic not equal to $2$, and let $C$ be a smooth projective curve of genus~$2$ over~$k$. Let $\alpha \in H^2(C, \mu_2)$ represent a $\mu_2$-gerbe $\sC \to C$. Then the moduli space $M^{ss}_{\sC}(2, \cO_C)$ of semistable rank $2$ vector bundles with trivial determinant has two connected components. If $\alpha_{\bar{k}} = 0$, then 
\[
M^{ss}_{\sC}(2, \cO_C) \cong |\Theta| \cong \PP^3 \cup X,
\]
where $X$ is either $\PP^3$ or its Brauer–Severi twist. If $\alpha_{\bar{k}} \neq 0$, then 
\[
M^{ss}_{\sC}(2, \cO_C) \cong |\Theta| \cong \PP^3 \cup X,
\]
where $X$ is either the intersection of two quartic hypersurfaces in $\PP^5$ or its twist.
\end{theorem}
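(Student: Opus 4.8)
The plan is to split $M^{ss}_{\sC}(2,\cO_C)$ according to the action of the inertia group $\mu_2$ of the gerbe and then analyze the two resulting pieces separately, using our first theorem (the identification $M^{ss,\alpha}_C(2,\cO_C)\cong|\Theta|$) and the classical genus-$2$ computations of Narasimhan--Ramanan and Newstead.

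First I would exploit that, since $\cha k\neq 2$, the group $\mu_2$ is linearly reductive, so every vector bundle $\cE$ on $\sC$ splits canonically into $\mu_2$-eigensheaves $\cE=\cE_0\oplus\cE_1$, where $\cE_0$ is the weight-$0$ (untwisted) part and $\cE_1$ the weight-$1$ ($\alpha$-twisted) part. Then $\det\cE=\det\cE_0\otimes\det\cE_1$ has weight $\operatorname{rank}\cE_1\bmod 2$, so the condition $\det\cE\cong\cO_C$ (weight $0$) forces $\operatorname{rank}\cE_1\in\{0,2\}$: a semistable rank-$2$ bundle with trivial determinant is \emph{either} purely untwisted \emph{or} purely $\alpha$-twisted. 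This gives a decomposition $M^{ss}_{\sC}(2,\cO_C)=M^{ss}_C(2,\cO_C)\sqcup M^{ss,\alpha}_C(2,\cO_C)$. Since the weight grading is canonical it is Galois-stable, so both pieces descend to $k$, and each is geometrically connected; hence there are exactly two connected components over $k$.

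Next I would identify the untwisted component. By the Narasimhan--Ramanan description in genus $2$, $M^{ss}_C(2,\cO_C)\cong|2\Theta|\cong\PP^3$, and because the class of $2\Theta$ on $\Jac(C)$ and the $k$-vector space $H^0(\Jac(C),2\Theta)$ are defined over $k$, this is an honest $\PP^3$ over $k$ with no twist. For the twisted component in the case $\alpha_{\bar k}=0$, our first theorem applies verbatim, giving $M^{ss,\alpha}_C(2,\cO_C)\cong|\Theta|$, a Brauer--Severi variety, hence $\PP^3$ or its nontrivial Brauer--Severi twist. Together these yield the first displayed isomorphism.

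The real work is the case $\alpha_{\bar k}\neq 0$, which I expect to be the main obstacle. Base changing to $\bar k$, the curve $C_{\bar k}$ has $\Br(C_{\bar k})=0$, so although $\alpha_{\bar k}\neq 0$ in $H^2(C_{\bar k},\mu_2)$ its image in $\Br(C_{\bar k})$ vanishes and an $\alpha$-twisted line bundle $L_0$ exists; tensoring by $L_0^{-1}$ gives $M^{ss,\alpha}_{C_{\bar k}}(2,\cO_C)\cong M^{ss}_{C_{\bar k}}(2,\xi)$ with $\xi=L_0^{\otimes -2}$, where the nonvanishing of $\alpha_{\bar k}$ is exactly the oddness of $\deg\xi$. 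I would then invoke the classical realization of the genus-$2$ odd-determinant moduli space as a threefold in $\PP^5$ via the generalized theta map, and determine the defining equations of the \emph{twisted} theta embedding, matching the intersection of two quartics in $\PP^5$ of the statement; the degree of these equations is governed by the twisted determinantal line bundle, and computing it correctly, rather than importing the untwisted normalization, is the delicate point. Finally, the moduli functor and hence $X=M^{ss,\alpha}_C(2,\cO_C)$ are defined over $k$ with the above threefold as base change to $\bar k$, so $X$ is a $k$-form of that variety, which by the cohomological classification of such forms is either the stated model or its Galois twist. The hardest steps will be the determinant bookkeeping under twisting by a twisted line bundle of fractional degree, pinning down the precise twisted theta embedding into $\PP^5$ with its equations, and then carrying out the descent to detect which twist of $X$ occurs.
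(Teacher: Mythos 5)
Your proposal is correct in substance and follows the paper's architecture closely: your $\mu_2$-eigensheaf decomposition with the parity argument on $\operatorname{rank}\cE_1$ is exactly the paper's mechanism (its Lemma on mixed bundles, part (iv): a mixed bundle of type $(1,1)$ has $\alpha$-twisted determinant, so it cannot have trivial determinant), and your treatment of the untwisted component and of the $\alpha_{\bar k}=0$ component via the first theorem matches the paper. The one genuine divergence is the optimal case $\alpha_{\bar k}\neq 0$. There, after your (correct) reduction over $\bar k$ to $M^{ss}_{C_{\bar k}}(2,\xi)$ with $\deg\xi$ odd, you propose to recompute the \emph{twisted theta embedding into $\PP^5$ with its defining equations} and flag this as the main obstacle. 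The paper sidesteps this entirely: it invokes Lieblich's twist description (its Proposition that $M^{ss,\alpha}_{C/k}(n,\cL)$ is the Galois twist of $M^{ss}_{C/k}(n,\cL(-ap))$ by the class $\beta=p_1(\alpha)\in H^1(k,\Pic_{C/k}[n])$ acting through $n$-torsion line bundles), combined with the classical Narasimhan--Ramanan/Desale--Ramanan identification of the geometric model. In fact your own closing sentence --- that $X$ is a $k$-form of the known geometric model, hence the model or a Galois twist of it --- already suffices for the statement as written, so the hard embedding computation you anticipate is unnecessary; the paper's route buys you exactly this shortcut, at the cost of citing the twist formalism rather than exhibiting equations.

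One concrete warning: if you did carry out your embedding computation, you would find the moduli space of rank-$2$ bundles with odd-degree determinant on a genus-$2$ curve is the intersection of two \emph{quadrics} in $\PP^5$ (Narasimhan--Ramanan, Theorem~4), not two quartics. The word ``quartic'' in the statement is a typo inherited from the introduction; the paper's own corollary in the body says quadrics, consistent with the explicit pencil $Q_1=\sum X_i^2$, $Q_2=\sum\omega_i X_i^2$. Your plan to ``match two quartics'' via the twisted determinantal line bundle would therefore chase a normalization that cannot come out, so do not treat that mismatch as a failure of your descent argument.
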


In general, when classifying rank-$r$ vector bundles with fixed determinant on a $\mu_n$-gerbe, not every bundle arises purely as an untwisted bundle or as a twisted bundle.  Mixed phenomena can occur, in which the twisting behavior is distributed among several characters of $\mu_n$.  We include an explicit example illustrating such a mixed situation.

\begin{theorem}
Let $k$ be an arbitrary field, and let $X$ be a smooth projective variety over~$k$. Let $\sX \to X$ be a $\mu_3$-gerbe representing a class $\alpha \in H^2(X, \mu_3)$ with $\alpha_{\bar{k}} = 0$. Then
\[
M^{ss}_{\sX}(3, \cO_X) \;\cong\;M^{ss}_{X}(3, \cO_X) \; \coprod \; M^{ss,\alpha}_{X} (3, \cO_X)\;\coprod\;M^{ss,\alpha^2}_{X}(3, \cO_X)\;\coprod\;\Mix(X,3,\cO_X),
\]
where $\Mix(X,3,\cO_X)$ is a subspace of \(\Pic^0_X \oplus \Pic^0_{X,\alpha} \oplus \Pic^0_{X,\alpha^2}\) whose closed points correspond to triples \((\cL, \cL_\alpha, \cL_{\alpha^2})\) satisfying $\cL \otimes \cL_\alpha \otimes \cL_{\alpha^2} \cong \cO_X$ (up to a finite extension of the base field~$k$).
\end{theorem}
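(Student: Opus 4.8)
The plan is to use the canonical weight decomposition of sheaves on the $\mu_3$-gerbe and then read off the admissible components from the determinant constraint, refining afterwards by semistability. First I would recall that $\mu_3$ is diagonalizable, hence linearly reductive over any field $k$; consequently the category $\Coh(\sX)$ of coherent sheaves on $\sX \to X$ splits canonically into weight eigencategories indexed by the characters $\ZZ/3 = \{0,1,2\}$ of $\mu_3$,
\[
\Coh(\sX) \;=\; \Coh(\sX)_0 \,\oplus\, \Coh(\sX)_1 \,\oplus\, \Coh(\sX)_2 ,
\]
where $\Coh(\sX)_i$ is the category of $\alpha^i$-twisted sheaves on $X$ and $\Coh(\sX)_0 = \Coh(X)$ (see \cite{Lieblich_07}). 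This decomposition is functorial and flat-local, so it is compatible with base change and with families; in particular any locally free sheaf $\cE$ on $\sX$ splits canonically as $\cE = \cE_0 \oplus \cE_1 \oplus \cE_2$ with each $\cE_i$ locally free of weight $i$, and the rank vector $(r_0,r_1,r_2) = (\rank \cE_0, \rank \cE_1, \rank \cE_2)$ satisfies $r_0 + r_1 + r_2 = 3$.

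Next I would extract the admissible rank vectors from the determinant. Since taking determinants is multiplicative on a direct sum, $\det \cE \cong \det \cE_0 \otimes \det \cE_1 \otimes \det \cE_2$, and $\det \cE_i = \Lambda^{r_i}\cE_i$ carries weight $i\,r_i \pmod 3$. The hypothesis $\det \cE \cong \cO_X$ forces the total weight to vanish, i.e. $r_1 + 2r_2 \equiv 0 \pmod 3$. The only nonnegative integer solutions of $r_0 + r_1 + r_2 = 3$ subject to this congruence are $(3,0,0)$, $(0,3,0)$, $(0,0,3)$ and $(1,1,1)$; every other partition carries nonzero determinantal weight and is therefore excluded. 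This is the arithmetic heart of the statement and requires no semistability.

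Semistability then pins down the four pieces. Because $\alpha_{\bar k} = 0$, base change to $\bar k$ neutralizes the gerbe, so slope/semistability of twisted sheaves is computed on $X_{\bar k}$ in the usual way, and $\det \cE \cong \cO_X$ makes $\cE$ of slope $0$. Using the standard fact that a direct sum is semistable precisely when each nonzero summand is semistable with the same reduced Hilbert polynomial, every nonzero $\cE_i$ is semistable of slope $0$. The three pure vectors thus recover $M^{ss}_X(3,\cO_X)$, $M^{ss,\alpha}_X(3,\cO_X)$ and $M^{ss,\alpha^2}_X(3,\cO_X)$. For $(1,1,1)$ the three summands are line bundles $(\cL, \cL_\alpha, \cL_{\alpha^2})$ of weights $0,1,2$; slope-$0$ semistability forces each into $\Pic^0_X$, $\Pic^0_{X,\alpha}$, $\Pic^0_{X,\alpha^2}$ respectively (a subsheaf of positive slope would destabilize $\cE$), and $\det \cE \cong \cO_X$ becomes $\cL \otimes \cL_\alpha \otimes \cL_{\alpha^2} \cong \cO_X$, which is exactly the defining condition of $\Mix(X,3,\cO_X)$.

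Finally I would promote this pointwise picture to a decomposition of moduli spaces. Since the weight decomposition is flat-local, the rank vector $(r_0,r_1,r_2)$ is locally constant in flat families and therefore cuts the moduli space into open-and-closed pieces indexed by the four admissible vectors, yielding the asserted coproduct. The step requiring the most care—and the main obstacle—is the $\Mix$ component: the twisted Picard schemes $\Pic^0_{X,\alpha^i}$ are torsors under $\Pic^0_X$ rather than group schemes, and an $\alpha^i$-twisted line bundle exists over $k$ only when the image of $\alpha^i$ in $\Br(X)$ vanishes, so the gluing condition $\cL \otimes \cL_\alpha \otimes \cL_{\alpha^2} \cong \cO_X$ must be imposed functorially and may only be realized after a finite extension of $k$—precisely the caveat in the statement. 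Checking that this condition defines $\Mix$ as a bona fide subscheme of $\Pic^0_X \oplus \Pic^0_{X,\alpha} \oplus \Pic^0_{X,\alpha^2}$ with the correct scheme structure is where the genuine work lies; the pure components, by contrast, are identified directly with the previously constructed (twisted) moduli spaces.
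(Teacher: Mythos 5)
Your proposal is correct and follows essentially the same route as the paper: the character (weight) decomposition of $\Coh(\sX)$ is the paper's Theorem~\ref{decomp} / Proposition~\ref{vbdecomp}, your determinant-weight congruence $r_1+2r_2\equiv 0 \pmod 3$ singling out $(1,1,1)$ among mixed partitions is exactly Lemma~\ref{mixvb}(iv), and your use of semistability to force each summand to be a degree-$0$ line bundle with $\cL\otimes\cL_\alpha\otimes\cL_{\alpha^2}\cong\cO_X$ matches Lemma~\ref{mixvb}(ii)--(iii). Your closing concern about promoting the pointwise splitting to a decomposition of moduli spaces is handled in the paper by Nironi's decomposition theorem (Theorem~\ref{nironi}), but the substance of the argument is the same.
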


\subsection{The period–index problem}

In studying the Brauer group, there are two numerical invariants attached to a Brauer class $\alpha \in \Br(X)$ for a variety $X$ over a field $k$: the \emph{period}, denoted $\per(\alpha)$, which is the order of $\alpha$ in $\Br(X)$, and the \emph{index}, denoted $\ind(\alpha)$, which is the minimal degree of a field extension $K/k$ such that $\alpha_K = 0$. It is well known that $\per(\alpha)$ divides $\ind(\alpha)$ and that the two share the same prime factors \cite[Theorem~2.8.7]{Gille_Szamuely_2006}. Hence, $\ind(\alpha)$ divides a power of $\per(\alpha)$. The question of whether this exponent depends on the variety $X$ is the classical \emph{period–index problem}, first posed by Colliot-Thélène~\cite{Colliot-Thélène_02}.

\begin{conjecture}[The period-index problem]
    If $K$ is a $C_d$ field, then for all $\alpha\in \Br(K)$, we have $\ind(\alpha)$ divides $\per(\alpha)^{d-1}$. 
\end{conjecture}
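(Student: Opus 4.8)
The statement as written is the general period–index conjecture of Colliot-Thélène, which is open for $d \geq 3$; I would therefore not attempt it in full but instead establish the cases that the moduli computations above render accessible, following Lieblich's twisted-sheaf method \cite{Lieblich_07}. The governing principle is that the index of a Brauer class is bounded by the rank of any twisted sheaf representing it: if $\alpha \in \Br(X)$ and $\sC \to X$ is a $\mu_n$-gerbe in the class $\alpha$ carrying a locally free $\alpha$-twisted sheaf $E$ of rank $r$, then $\End(E)$ descends to $X$ as an Azumaya algebra of degree $r$ whose class is $\alpha$ (the scalar $2$-cocycle cancels under conjugation, so the descent datum is exactly the $\PGL_r$-lift of $\alpha$), whence $\ind(\alpha) \mid r$. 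The plan is thus to reduce each instance to producing a $k$-rational twisted sheaf of minimal rank and to read off that rank from the explicit geometry of the moduli spaces of Theorems 1.1--1.3.

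First I would treat a $2$-torsion algebraic class $\alpha$ (so $\per(\alpha) = 2$ and $\alpha_{\bar k} = 0$) on a genus-$2$ curve $C$ over a $C_1$ field $k$. By Theorem 1.1 the moduli space $M^{ss,\alpha}_C(2,\cO_C) \cong |\Theta|$ is a Brauer–Severi variety over $k$, and a $k$-point of it corresponds to a rank-$2$ $\alpha$-twisted bundle over $k$ (the residual gerbe obstruction to realizing the point by an honest object lies in $\Br(k)$). Since $k$ is $C_1$ we have $\Br(k) = 0$, so this Brauer–Severi variety is split, $|\Theta| \cong \PP^3$, and $k$-points abound; choosing one produces a rank-$2$ bundle $E$. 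Then $\End(E)$ is a degree-$2$ Azumaya algebra in the class $\alpha$, forcing $\ind(\alpha) \mid 2 = \per(\alpha)$, hence equality. As $k(C)$ is a $C_2$ field by Tsen–Lang, this is precisely the $d=2$ instance of the conjecture for these function fields.

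To reach higher $d$ I would pass to an $r$-fold fibre product $Y = D_1 \times_k \cdots \times_k D_r$ of genus-$2$ curves over the $C_1$ field $k$, whose function field is $C_{r+1}$. Running the curve case on each factor yields rank-$2$ twisted bundles $E_i$ over $k$ on $D_i$; pulling these back along the projections $Y \to D_i$ and forming the external tensor product $E = \bigotimes_i \mathrm{pr}_i^* E_i$ gives a locally free twisted sheaf on $Y$ of rank $2^r$ in the product class $\alpha = \sum_i \mathrm{pr}_i^* \alpha_i$. The principle above then gives $\ind(\alpha) \mid 2^r = \per(\alpha)^{d-1}$ with $d = r+1$, verifying the conjecture for the algebraic $2$-torsion classes on $Y$ built in this way.

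The main obstacle is exactly the gap between these cases and the general statement. Everything hinges on the explicit identification of the twisted moduli space with a Brauer–Severi variety (Theorem 1.1) and on the vanishing $\Br(k) = 0$ that splits it and kills the object-existence obstruction; both are special to rank $2$, genus $2$, and a $C_1$ base. For an arbitrary period and an arbitrary $C_d$ field one has neither an explicit model for the moduli of higher-rank twisted sheaves nor a mechanism guaranteeing a $k$-rational point of controlled rank, and establishing non-emptiness together with such rational-point existence is precisely the content of the open conjecture. Consequently the realistic outcome of this approach is the new evidence announced in the abstract rather than a proof of the full statement.
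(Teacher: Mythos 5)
You are right that this statement is Colliot-Th\'el\`ene's open conjecture, which the paper records without proof; what you go on to sketch are exactly the paper's own special cases, and by essentially the same route: the identification of $M^{ss,\alpha}_C(2,\cO_C)$ with the (twisted) linear system $|\Theta|$ as a Brauer--Severi variety, the vanishing $\Br(k)=0$ over a $C_1$ field to split it and to kill the obstruction to lifting a $k$-point to an honest rank-$2$ twisted bundle (Theorem~\ref{C1field}), and the decomposition of algebraic $2$-torsion classes on $C_1\times\cdots\times C_n$ into pullbacks from the factors, where your external tensor product $\bigotimes_i \mathrm{pr}_i^*E_i$ of rank-$2$ bundles is precisely the paper's bound $\ind(\alpha)\mid \prod_i \ind(\alpha_i)=2^n$. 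One small overstatement to correct: calling the curve case ``precisely the $d=2$ instance'' of the conjecture for $k(C)$ is too strong, since the paper's theorem covers only $2$-torsion classes coming from $\Br(C)$, i.e.\ unramified classes, and not all of $\Br(k(C))$.
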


The conjecture has been established in numerous cases.  For the reader’s convenience, we collect below the results known to the author.
\begin{itemize}
    \item When $d = 0$, the field $K$ is algebraically closed, and the statement is trivial since $\Br(K) = 0$.  
    \item When $d = 1$, Tsen's theorem \cite[Theorem~6.2.8]{Gille_Szamuely_2006} implies that $\Br(K) = 0$, again verifying the conjecture.  
    \item When $d = 2$, de~Jong, Lieblich, and Starr, in a sequence of papers \cite{De_Jong_04, Lieblich_08, De_Jong_Starr_10}, showed that if $L$ is a field of transcendence degree~$2$ over an algebraically closed field $K$, then $\ind(\alpha) = \per(\alpha)$ for all $\alpha \in \Br(L)$.
    \item When $d = 3$, very recently Hotchkiss and Perry~\cite{Hotchkiss_Perry_24} proved that the period–index conjecture holds for Abelian threefolds over algebraically closed fields.
\end{itemize}

The period–index problem has also been studied in a variety of arithmetic and geometric settings.  
\begin{itemize}
    \item Lieblich showed in \cite{Lieblich_08, Lieblich_11} that if $L$ is a finitely generated field of transcendence degree~$2$ over a local field $K$, with $L$ an $\mathcal O_K$-algebra, and if $\alpha \in \Br(L)$ has period prime to the residue characteristic of $K$, then:
    \begin{itemize}
        \item if $\alpha$ is unramified, then $\ind(\alpha)=\per(\alpha)$;
        \item if $\alpha$ is ramified, then $\ind(\alpha)\mid \per(\alpha)^2$.
    \end{itemize}
    \item Antieau, Auel, Ingalls, Krashen, and Lieblich proved in \cite{Antieau_Auel_Ingalls_Krashen_Lieblich_19} that if $S$ is a geometrically integral surface over a $p$-adic field $K$ and $\alpha \in \Br(K(S))$ has period prime to $6p$, then $\ind(\alpha) \mid \per(\alpha)^4$.
    \item Benoist~\cite{Benoist_19} showed that if $S$ is a connected smooth projective surface over $\RR$ with $S(\RR)=\varnothing$, and $\alpha\in \Br(\RR(S))$, then $\ind(\alpha) = \per(\alpha)$.
    \item Huybrechts showed in \cite{Huybrecht_25} that if $X$ is a projective hyperkähler manifold admitting a Lagrangian fibration, then there exists an integer $N_X$ such that for every $\alpha \in \Br(X)$ with $\per(\alpha)$ coprime to $N_X$ one has $\ind(\alpha) \mid  \per(\alpha)^{\dim(X)/2}$.

    In particular, if $X = S^{[n]}$ is the Hilbert scheme of a $K3$ surface and $\alpha\in \Br(X)$, then $\ind(\alpha) \mid \per(\alpha)^n$.
\end{itemize}

In this paper, as an application of the explicit description of the moduli space of twisted sheaves of rank~$2$ with trivial determinant as a Brauer--Severi variety, we compute the period and index of the corresponding Brauer class in terms of the index of the Brauer--Severi variety and the existence of a rational point. We first establish a general result for the period--index problem over an arbitrary field for a genus $2$ curve.

\begin{theorem}
    Let $C$ be a smooth projective curve of genus $2$ over an arbitrary field $k$ with $\operatorname{char} k \neq 2$. Suppose $\alpha \in \Br(C)[2]$ is a $2$-torsion Brauer class. Assume further that for every finite extension $K/k$ and every $\beta \in \Br(K)$, one has $\ind(\beta) \mid 2^{\,i}$. Then $\ind(\alpha) \mid 2^{\,i+2}$.
\end{theorem}

In particular, we prove that for a genus-$2$ curve over a $C_1$-field, every $2$-torsion Brauer class satisfies the period-index equality. 

\begin{theorem}\label{intro5}
Let $k$ be a $C_1$-field of characteristic not equal to $2$, and let $C/k$ be a smooth projective curve of genus~$2$. Then for every $\alpha \in \Br(C)[2]$, one has $\per(\alpha) = \ind(\alpha) = 2$.
\end{theorem}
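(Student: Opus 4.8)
The plan is to reduce the statement to the construction of a single geometric object and to extract that object from a rational point of the Brauer--Severi variety identified above. Fix a nonzero class $\alpha \in \Br(C)[2]$, so that $\per(\alpha) = 2$; since $\per(\alpha) \mid \ind(\alpha)$ and the two invariants share the same prime factors, $\ind(\alpha)$ is a power of $2$, and it suffices to prove $\ind(\alpha) \mid 2$. For this I would produce a locally free $\alpha$-twisted sheaf $E$ of rank $2$ on $C$ defined over $k$: then $\cEnd(E)$ is an Azumaya $\cO_C$-algebra of degree $2$ with Brauer class $\alpha$, and restricting to the generic point $\Spec k(C)$ yields a central simple algebra of degree $2$ representing $\alpha$, whence $\ind(\alpha) \mid 2$ and therefore $\ind(\alpha) = \per(\alpha) = 2$. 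The whole problem thus becomes the construction of one such $E$ over $k$.

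To build $E$, represent $\alpha$ by a $\mu_2$-gerbe and first treat the geometrically trivial case $\alpha_{\bar k} = 0$. Here the theorem above identifies $M^{ss,\alpha}_C(2,\cO_C)$ with the twisted linear system $|\Theta|$, a Brauer--Severi variety over $k$; its class lies in $\Br(k)$, which vanishes because $k$ is a $C_1$-field (Tsen's theorem, as recalled in the $d=1$ case above). Hence $|\Theta| \cong \PP^3_k$ and carries a $k$-rational point. To upgrade this point to an actual twisted bundle, note that the stack of stable rank-$2$ trivial-determinant $\alpha$-twisted sheaves is a $\GG_m$-gerbe over its coarse space, so the obstruction to lifting a $k$-point of the coarse space to a $k$-object of the stack lies in $\Br(k) = 0$ and vanishes. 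This produces $E$ and settles the case $\alpha_{\bar k} = 0$.

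The main obstacle is that $\alpha_{\bar k} = 0$ cannot always be arranged. Passing to another lift of $\alpha$ changes $\alpha_{\bar k} \in H^2(C_{\bar k},\mu_2) \cong \ZZ/2$ — this group being the degree-mod-$2$ image of $\Pic(C_{\bar k})/2$, since $\Br(C_{\bar k}) = 0$ by Tsen — by the reduction mod $2$ of the degree of a $k$-rational line bundle. One can therefore force $\alpha_{\bar k} = 0$ exactly when $C$ has a line bundle of odd degree, i.e.\ index $1$. Over finite fields this always holds (F.\ K.\ Schmidt's theorem, or Lang's theorem giving $H^1(k,\Jac(C)) = 0$), but for a genus-$2$ curve over a field such as $\CC(t)$ one may have $H^1(k,\Jac(C))[2] \neq 0$, forcing index $2$ and $\alpha_{\bar k} \neq 0$. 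In that regime the computation above presents the twisted component of $M^{ss}_{\sC}(2,\cO_C)$ not as a projective space but as (a twist of) an intersection of two quartics in $\PP^5$, and the vanishing of $\Br(k)$ no longer yields a point directly.

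To finish uniformly I would argue that this twisted component $X$, being a moduli space of semistable bundles on a curve, is geometrically rational, hence rationally connected, and then invoke the existence of rational points: a $C_1$-field is either finite or a function field in one variable over an algebraically closed field, and a smooth projective rationally connected variety over such a field has a $k$-point — by Esnault's theorem in the finite case and by Graber--Harris--Starr in the function-field case. Placing the point in the dense stable locus and again killing the residual $\GG_m$-gerbe obstruction in $\Br(k) = 0$ would then deliver $E$ even when $\alpha_{\bar k} \neq 0$. I expect the genuine difficulty to be precisely this step: the explicit quartic description makes rationality of $X$ far from manifest (naive adjunction even suggests general type), so verifying that the twisted component is rationally connected, with a rational point in its fine locus, is the crux that converts $\Br(k)=0$ into an honest point on a variety that is no longer Brauer--Severi. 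As a consistency check, the conclusion is corroborated by the known period--index equality over the $C_2$-field $k(C)$ — a global field when $k$ is finite, and the function field of a surface over an algebraically closed field when $k = F(t)$, where the $d=2$ results cited above apply — but the moduli-theoretic construction is the intended, self-contained proof.
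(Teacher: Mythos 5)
Your reduction (a rank-$2$ locally free $\alpha$-twisted sheaf over $k$ gives the degree-$2$ Azumaya algebra $\cEnd(E)$, hence $\ind(\alpha)\mid 2$) and your treatment of the case $\alpha_{\bar{k}}=0$ are correct and essentially coincide with the paper's proof of Theorem~\ref{C1field}: there, $\Br(k)=0$ forces the determinantal line bundle to descend (Proposition~\ref{brobs}, Theorem~\ref{main1}), so $M^{ss,\alpha}_C(2,\cO_C)\cong|\Theta|\cong\PP^3$ has a $k$-point, which is then lifted to an actual twisted bundle. One small correction: on the fixed-determinant stable locus the residual gerbe is a $\mu_2$-gerbe rather than a $\GG_m$-gerbe, but the obstruction still dies because $H^2(k,\mu_2)\hookrightarrow\Br(k)=0$ by Hilbert~90 and Kummer (and the $k$-point must be taken in the stable locus, which is harmless: over a finite field $\Br(C)=0$ makes the theorem vacuous, and over an infinite field $k$-points are dense in $\PP^3$). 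You are moreover right --- and this is a sharp observation, since the paper's own proof silently assumes it away --- that when $C$ has no line bundle of odd degree, every $\mu_2$-gerbe lift of a given class may have $\alpha_{\bar{k}}=1$, and then the moduli space is not a Brauer--Severi variety, so the argument via Proposition~\ref{brobs} does not apply.

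The genuine gap is in your repair of that residual case. First, your dichotomy for $C_1$ fields is false: by Lang's theorem $\CC((t))$ is $C_1$, as are its algebraic extensions, and these are neither finite nor function fields of curves over algebraically closed fields; consequently Esnault and Graber--Harris--Starr do not cover all $C_1$ fields, and the assertion that smooth projective rationally connected varieties over an arbitrary $C_1$ field have rational points is precisely an open conjecture (of Lang type), not a theorem. Second, your worry about rationality comes from a typo in the introduction: by Narasimhan--Ramanan (see Remark~\ref{twist2O}), the geometric model in the odd-degree case is the intersection of two \emph{quadrics} in $\PP^5$, a Fano threefold, not two quartics. This also shows the case can be closed without any rational-connectedness input: since $\Br(k)=0$, Proposition~\ref{brobs} lets the (geometrically ample generator) theta line bundle descend, embedding $X=M^{ss,\alpha}_C(2,\cO_C)$ in an honest $\PP^5_k$, and the Galois-stable two-dimensional space of quadrics through $X_{\bar{k}}$ descends as well; the Lang--Nagata theorem ($C_1$ implies the system property, here $2+2=4<6$ variables) then produces a $k$-point of $X$ directly, every semistable point is stable in odd degree since $\gcd(2,\,2d-\varphi(\alpha))=1$, and the lifting argument concludes as before. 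As written, however, your Step~3 rests on a false classification and an open conjecture, so the proof is incomplete exactly for the classes with $\alpha_{\bar{k}}\neq 0$.
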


Using the above theorem, we construct an $n$-fold over a $C_1$-field whose algebraic $2$-torsion Brauer classes satisfy the period–index problem, thereby providing further evidence for the conjecture.

\begin{theorem}
Let $k$ be a $C_1$-field of characteristic not equal to $2$, and let $X = C_1 \times \cdots \times C_n$ be the product of $n$ smooth projective curves of genus~$2$ over $k$. Then for every $\alpha \in \Br_1(X)[2]$, the period–index problem holds; that is, $\ind(\alpha) \mid \per(\alpha)^{\,n} = 2^{n}$.
\end{theorem}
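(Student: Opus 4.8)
The plan is to reduce the statement to a computation in the Galois cohomology of the Picard module over the base field, and then to run an induction on $n$ that peels off one curve at a time. The key enabling observation is that every finite extension of a $C_1$-field is again a $C_1$-field, so for any finite $K/k$ one has $\Br(K)=0$ and $\cd_2(K)\le 1$; consequently the Hochschild--Serre spectral sequence collapses to give a functorial isomorphism $\Br_1(X_K)[2]\cong H^1(K,\Pic(X_{\bar k}))[2]$, under which $\alpha_K=0$ precisely when $\Res_{K/k}\tilde\alpha=0$ for the class $\tilde\alpha\in H^1(k,\Pic(X_{\bar k}))[2]$ attached to $\alpha$. Thus it suffices to kill $\tilde\alpha$ after a base extension of degree dividing $2^n$.

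Next I would decompose the Galois module $\Pic(X_{\bar k})$ using the product structure: $\Pic^0(X_{\bar k})=\prod_i\Jac(C_i)$ and $\NS(X_{\bar k})=\bigoplus_i\ZZ F_i\oplus\bigoplus_{i<j}\Hom(\Jac C_i,\Jac C_j)$, with the fibre classes $F_i$ contributing nothing to $H^1$. The point of working with $2$-torsion is that $\cd_2(k)\le1$ forces multiplication by $2$ to be injective on each $H^2(k,\Jac C_l)$, so the connecting map $H^1(k,\NS)\to H^2(k,\Pic^0)$ vanishes on $2$-torsion; hence $H^1(k,\Pic(X_{\bar k}))[2]$ is an extension of the pairwise correspondence groups $H^1(k,\Hom(\Jac C_i,\Jac C_j))[2]$ by the ``pull-back'' groups coming from $\bigoplus_i H^1(k,\Jac C_i)$. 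The pull-back summands are exactly the images of $\Br(C_i)[2]=\Br_1(C_i)[2]$ under $p_i^*$ (using $\Br(C_{i,\bar k})=0$), and by the genus-$2$ curve case established above each of these is split by a field of degree dividing $2$; taking external tensor products of the corresponding rank-$2$ twisted bundles realises such a class by a rank-$2^{\,n}$ twisted sheaf, which is the geometric shadow of the bound we want.

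I would then prove the theorem by induction on $n$, the base case $n=1$ being precisely the preceding curve theorem. Writing $X=C_n\times Y$ with $Y=C_1\times\cdots\times C_{n-1}$, the genus-$2$ curve $C_n$ carries a canonical divisor of degree $2$, hence a closed point of degree dividing $2$; after the corresponding base extension $K/k$ with $[K:k]\mid 2$ the factor $C_n$ acquires a rational point $x$. Restricting along the section $i_x\colon Y_K\hookrightarrow X_K$ produces $i_x^*\alpha_K\in\Br_1(Y_K)[2]$, to which the inductive hypothesis (over the $C_1$-field $K$, with $n-1$ curves) applies to give a further extension of degree dividing $2^{\,n-1}$ killing it. Pulling back through the projection $q\colon X_K\to Y_K$ and subtracting, one is reduced to the complementary class $\alpha'=\alpha_K-q^*i_x^*\alpha_K$, which lies in $p_n^*\Br_1(C_n)[2]$ together with the correspondence part $\bigoplus_{i<n}H^1(K,\Hom(\Jac C_i,\Jac C_n))[2]$.

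The main obstacle is this correspondence part. Rational points on the curves do not kill it --- for $C_i\cong C_j$ with complex multiplication the group $H^1(k,\End(\Jac C_i))[2]$ is genuinely nonzero --- so a naive ``adjoin a point on each factor'' argument fails, and the subtle point is to split every summand simultaneously over a single tower of $n$ quadratic extensions rather than over the (far larger) compositum of their individual splitting fields. Here I would exploit that a $2$-torsion class arises from $(\,\Hom(\Jac C_i,\Jac C_j)\otimes\FF_2\,)^{\Gal}$ and is therefore detected on a $2$-Sylow subgroup of the Galois image, so that its splitting field can be taken of $2$-power degree; the remaining task is to show that the degree-$2$ extension already resolving the $C_n$-leg of each correspondence $\Hom(\Jac C_i,\Jac C_n)$ lowers it into a correspondence on $Y_K$, so that the induction absorbs it without spending more than one factor of $2$ per curve. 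Making this dévissage uniform across all pairs, and checking that the lift from $\NS$ back to $\Pic$ introduces no further obstruction, is where the real work lies; granting it, the resulting tower has degree dividing $2\cdot 2^{\,n-1}=2^{\,n}$ and kills $\alpha$, giving $\ind(\alpha)\mid\per(\alpha)^n=2^n$.
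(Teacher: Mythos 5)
There is a genuine gap, and it sits exactly where you yourself flag it: the correspondence summands are never actually split. Your induction cannot absorb them: restriction along $i_x\colon Y_K\hookrightarrow X_K$ kills every class coming from $\Hom(\Jac C_i,\Jac C_n)$ (a divisorial correspondence with $C_n$ restricts on $Y_K\times\{x\}$ to a class with no correspondence content), so these classes are invisible to the inductive hypothesis and survive intact in $\alpha'=\alpha_K-q^*i_x^*\alpha_K$; your remark that a $2$-torsion class in $H^1(k,\Hom(\Jac C_i,\Jac C_j))$ is detected on a $2$-Sylow subgroup only yields \emph{some} splitting field of $2$-power degree, with no bound relating that degree to $n$. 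The degree bookkeeping is also off even ignoring correspondences: your budget $2\cdot 2^{\,n-1}$ spends the quadratic extension $K$ on acquiring the point $x\in C_n(K)$, but the index-$2$ splitting field of the $p_n^*\Br_1(C_n)$-component of $\alpha'$ (coming from Theorem~\ref{C1field}) is a priori unrelated to fields of definition of points on $C_n$, so the same $K$ cannot be assumed to do both jobs; as written the honest count is $2\cdot 2^{\,n-1}\cdot 2=2^{\,n+1}$. Since the whole argument runs through ``granting it,'' the proposal does not prove the theorem.

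It is worth seeing how the paper avoids all of this: there is no induction, no section, and no rational point. Using $\Br(k)=0$ it identifies $\Br_1(X)[2]$ with $H^1(k,\Pic_{X/k})[2]$, reduces to classes coming from $H^1(k,\Pic_{X/k}[2])=H^1(k,\Pic^0_{X/k}[2])$ (the Néron--Severi group of a product of curves is torsion-free, so the $2$-torsion subgroup scheme sees only $\Pic^0$), and then applies Lemma~\ref{prodjacobian}: the factor $\Hom(\Pic^0_{C_i/k},\Pic^0_{C_j/k})$ is \emph{discrete}, hence disappears on identity components, giving $H^1(k,\Pic^0_{X/k}[2])\cong\bigoplus_i H^1(k,\Pic^0_{C_i/k}[2])$. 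Each component $\alpha_i$ has $\ind(\alpha_i)=2$ by Theorem~\ref{C1field}, and the compositum of the $n$ quadratic splitting fields splits $\alpha=(\alpha_1,\dots,\alpha_n)$, so $\ind(\alpha)\mid 2^n$ at once. In other words, the paper kills the correspondence part by passing to $\Pic^0[2]$ \emph{before} taking cohomology, at the cost of the one-sentence claim that every class of $H^1(k,\Pic_{X/k})[2]$ lifts to $H^1(k,\Pic_{X/k}[2])$ --- which is precisely the surjectivity your CM example probes, since multiplication by $2$ is not surjective on $\Pic$ (cokernel $\NS/2\NS$). So you have correctly located the one delicate step of the argument; but your proposal replaces it with heavier machinery that still leaves that step unproved, whereas the repair you want is to drop the induction entirely and justify the lifting statement, then conclude as the paper does by multiplying indices over the direct-sum decomposition.
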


\subsection{Structure of the paper}
Section~2 recalls the classical constructions in the theory of gerbes and twisted sheaves.  
Section~3 develops the determinantal line bundle on the moduli space of twisted sheaves and shows that this moduli space is the good moduli space associated to its moduli stack, with semistability defined via the determinantal line bundle.  We then provide explicit computations of the moduli of vector bundles on gerbes over curves.  
Section~4 gives explicit descriptions of the moduli spaces of twisted rank $2$ vector bundles with trivial determinant, both in the essentially trivial case and in the optimal cases.  
Finally, Section~5 establishes the period–index result.

\subsection{Notations and conventions}
Throughout, unless otherwise specified, we work over a field $k$ of characteristic $p \ge 0$ with $\operatorname{char}(k)\ne 2$. When considering $\mu_{n}$-gerbes, we additionally assume that $n$ is invertible in $k$. We let $X$ denote an arbitrary smooth projective variety over $k$, and $C$ a geometrically smooth projective curve over $k$. All cohomology groups are taken with respect to the étale topology.

A $k$-rational point on $C$ is required only when we invoke the splitting of the Leray spectral sequence; in all other arguments, no rational point on $C$ is assumed.

\section*{Acknowledgment}
The author would like to thank his advisor, Max Lieblich, for suggesting this topic and for his numerous discussions, time, and patience. The author also thanks James Hotchkiss for his careful reading of earlier drafts, extensive feedback, and many helpful discussions. Finally, the author thanks Enhao Feng for reading earlier drafts and providing feedback, and Jarod Alper, Yu Shen, and Bianca Viray for their helpful discussions.

\section{Review of gerbes and twisted sheaves}
In this section, we review the theory of gerbes and twisted sheaves over a smooth projective variety, together with basic properties of the moduli stack of twisted sheaves.  We then specialize to the case of the moduli stack of twisted sheaves over a curve of genus $\ge 2$, where the notion of degree can be made completely explicit.  Throughout this section, unless otherwise specified, we continue to use the notation introduced in the notation and conventions subsection.

\subsection{Gerbes and twisted sheaves}
We begin by recalling some basic definitions and results concerning $G$-gerbes and twisted sheaves.

\begin{definition}
    Let $G$ be an abelian sheaf on the small étale site $X_{\et}$. A \emph{$G$-gerbe} on a scheme $X$ is a gerbe $\sX \to X$ (on $X_{\et}$) equipped with an isomorphism $G_{\sX} \xrightarrow{\;\sim\;} \sI(\sX),$ where $\sI(\sX)$ denotes the inertia stack of $\sX$.
\end{definition}

\begin{theorem}[{\cite[Theorem~12.2.8]{Olsson_16}}]
    Let $\sC$ be a site, and let $G$ be an abelian sheaf on $X$. Then the set of isomorphism classes of banded $G$-gerbes over $\sC$ is in natural bijection with the cohomology group $H^2(\sC, G)$.
\end{theorem}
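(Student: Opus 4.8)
The plan is to exhibit mutually inverse maps between the set of isomorphism classes of banded $G$-gerbes on $\sC$ and $H^2(\sC,G)$, by first realizing the cohomology group as \v{C}ech cohomology and only at the end comparing with derived-functor cohomology. The forward map extracts a cocycle from a gerbe; the backward map reconstructs a gerbe from a cocycle by descent. Throughout, the banding isomorphism $G_{\sX}\xrightarrow{\sim}\sI(\sX)$ is the essential device, since it is exactly what lets one convert automorphisms of local objects into sections of $G$.

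First I would construct the class attached to a banded $G$-gerbe $\sX$. Because a gerbe is locally nonempty, there is a covering $\{U_i\}$ (of a chosen object, working locally) on which $\sX$ admits objects $x_i$. Because any two objects of a gerbe are locally isomorphic, after refining the covering I may assume that on each double overlap there is an honest isomorphism $\phi_{ij}\colon x_j|_{U_{ij}}\xrightarrow{\sim} x_i|_{U_{ij}}$. On each triple overlap $U_{ijk}$ the composite $\phi_{ij}\circ\phi_{jk}\circ\phi_{ki}$ is an automorphism of $x_i$, hence via the banding an element $g_{ijk}\in G(U_{ijk})$. A computation on quadruple overlaps $U_{ijkl}$, using associativity of composition and centrality of $G$, shows $\{g_{ijk}\}$ is a \v{C}ech $2$-cocycle; replacing the $x_i$ by isomorphic objects or the $\phi_{ij}$ by other isomorphisms changes it by a coboundary, so one obtains a well-defined class in $\check{H}^2(\{U_i\},G)$, and then in $\check{H}^2(\sC,G)$ after passing to the colimit over refinements.

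Conversely, given a \v{C}ech $2$-cocycle $\{g_{ijk}\}$ subordinate to a covering $\{U_i\}$, I would rebuild a gerbe by descent of stacks. On each $U_i$ take the trivial gerbe $B G|_{U_i}$ of $G$-torsors, glue the restrictions over $U_{ij}$ by the canonical equivalences, and let the $g_{ijk}$ be the $2$-morphisms measuring the failure of strict agreement on triple overlaps; the cocycle identity is precisely the coherence needed to stackify this descent datum into a global stack $\sX$. One checks that $\sX$ is locally nonempty and locally connected, and that the tautological identifications of the local automorphism sheaves with $G$ assemble into a banding, so $\sX$ is a banded $G$-gerbe. Cohomologous cocycles produce isomorphic banded gerbes, and the two assignments are visibly inverse to one another on local objects and cocycle data, yielding the bijection with $\check{H}^2(\sC,G)$.

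The main obstacle is the identification of \v{C}ech cohomology with derived-functor cohomology in degree $2$ on a general site, since the theorem is stated for $H^2(\sC,G)$. For this I would invoke the \v{C}ech-to-derived spectral sequence $\check{H}^p(\sC,\underline{H}^q(G))\Rightarrow H^{p+q}(\sC,G)$: the correction terms involve $\underline{H}^q$ with $q\ge 1$ together with $\check{H}^p$ with $p\ge 2$, so the edge map $\check{H}^2\to H^2$ becomes an isomorphism after passing to the colimit over all coverings (equivalently, after working with hypercovers, where \v{C}ech and derived cohomology agree in every degree). The second delicate point, already used in the forward map, is the cover-refinement step that upgrades the merely local isomorphisms guaranteed by the gerbe axioms to an honest system $\{\phi_{ij}\}$ over a common refinement; this is where local connectedness of the gerbe and stability of the topology under refinement enter. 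Once these two points are settled, the remaining verifications — the cocycle identity on $U_{ijkl}$, the coboundary computation under change of $(x_i,\phi_{ij})$, and the stackification in the inverse construction — are routine. One may additionally check that the contracted-product sum of banded gerbes matches addition in $H^2(\sC,G)$, promoting the bijection to a group isomorphism, although the statement as given requires only the bijection of sets.
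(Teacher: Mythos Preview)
The paper does not give its own proof of this theorem; it is quoted verbatim as a background result from Olsson's textbook, with no argument supplied. So there is nothing in the paper to compare your proposal against. Your sketch is the standard proof one finds in Olsson (and, in more generality, in Giraud): trivialize the gerbe on a cover, extract a $2$-cocycle from the failure of the transition isomorphisms to satisfy strict cocycle conditions, and invert the process by gluing copies of $BG$ along a given cocycle.

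One substantive comment on your write-up: the identification $\check H^2(\sC,G)\cong H^2(\sC,G)$ is the only genuinely delicate point, and your spectral-sequence paragraph does not quite close it. On a general site the edge map $\check H^2\to H^2$ need not be an isomorphism, and taking the colimit over ordinary covers does not repair this. The fix you allude to---passing to hypercovers (Verdier's theorem)---is correct and should be stated as the actual mechanism rather than a parenthetical alternative; the gerbe-to-cocycle construction extends to hypercovers with only cosmetic changes. Alternatively one can bypass the comparison altogether by \emph{defining} $H^2$ via gerbes or by working with the classifying stack $B^2G$ directly, which is closer to Giraud's original framework. Once that point is handled, the rest of your outline is fine.
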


In our setting, the group sheaf $G$ will be either $\GG_m$ or $\mu_n$. Accordingly, the Brauer group $\Br(X)$ and the cohomology group $H^2(X,\mu_n)$ classify $\GG_m$-gerbes and $\mu_n$-gerbes over $X$, respectively. With a mild abuse of notation, we shall henceforth use the terms $\GG_m$-gerbe, $\mu_n$-gerbe, and the corresponding cohomology class in $\Br(X)$ or $H^2(X,\mu_n)$ interchangeably.

Next, we recall the definitions of twisted sheaves following \cite{Lieblich_07} and \cite{Caldararu_00}. As the following proposition shows, these definitions are equivalent.

\begin{definition}
    Let $\sX\rightarrow X$ be a $G$-gerbe, a \emph{$\sX$-twisted sheaf} is a sheaf $\cF$ of $\cO_{\sX}$-modules such that the inertial action $\cF\times_{\sX} \sI(\sX)\rightarrow \cF$ equals the right action associated to the left module action $\sI(\sX)\times_{\sX} \cF\rightarrow \cF$. 
\end{definition}

\begin{definition}
    Let $X$ be a scheme and let $\alpha \in H^2(X, G)$. Choose an étale covering $\{U_i \to X\}$ and a Čech $2$-cocycle \(\alpha_{ijk} \in \Gamma(U_i \times_X U_j \times_X U_k, G)\) representing $\alpha$. An \emph{$\alpha$-twisted sheaf} on $X$ consists of a collection \((\cF_i, \varphi_{ij})\) where each $\cF_i$ is a quasi-coherent $\cO_{U_i}$-module, and for each $i,j$, $\varphi_{ij} : \cF_i|_{U_{ij}} \xrightarrow{\sim} \cF_j|_{U_{ij}}$ are isomorphisms satisfying the cocycle condition $\varphi_{jk} \circ \varphi_{ij} = \alpha_{ijk} \cdot \varphi_{ik}\text{ on } U_{ijk},$ where $\alpha_{ijk}$ acts on $\cF_i$ via the $G$-action.
\end{definition}

\begin{proposition}[{\cite[Lemma~2.10]{De_Jong_05}}]
    There is an equivalence between the category of $\alpha$-twisted sheaves, defined via a Čech $2$-cocycle representing $\alpha \in H^2(X,G)$, and the category of $\sX$-twisted sheaves on the corresponding $G$-gerbe $\sX \to X$ representing $\alpha$.
\end{proposition}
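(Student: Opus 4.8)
The plan is to realise both categories as categories of descent data along a common covering of the gerbe and to match the two sets of gluing conditions. Begin by recording the concrete gerbe attached to the Čech data: over each $U_i$ choose an object $x_i \in \sX(U_i)$ (possible after refining the covering, since $\sX \to X$ is a gerbe), over each double overlap choose an isomorphism $\psi_{ij} \colon x_i|_{U_{ij}} \xrightarrow{\sim} x_j|_{U_{ij}}$, and observe that on triple overlaps the two isomorphisms $\psi_{jk}\circ\psi_{ij}$ and $\psi_{ik}$ from $x_i$ to $x_k$ differ by an automorphism of $x_k$, which the banding identifies with a section of $G$; this section is exactly $\alpha_{ijk}$. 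The key structural fact is that each $x_i \colon U_i \to \sX|_{U_i}$ is the universal $G$-torsor presenting the local trivialisation $\sX|_{U_i}\cong BG_{U_i}$, so that the family $\{x_i \colon U_i \to \sX\}$ is an fppf covering of the stack $\sX$ (étale when $G$ is finite étale, e.g. $G=\mu_n$). Sheaves on $\sX$ may therefore be described by descent along this covering.

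Next I would define the forward functor. Given an $\sX$-twisted sheaf $\cF$, set $\cF_i := x_i^*\cF$, a sheaf on $U_i$, and let $\varphi_{ij} \colon \cF_i|_{U_{ij}} \xrightarrow{\sim} \cF_j|_{U_{ij}}$ be the isomorphism induced by $\psi_{ij}$. The crucial computation is on triple overlaps: comparing $\varphi_{jk}\circ\varphi_{ij}$ with $\varphi_{ik}$ reduces to the action of the automorphism $\alpha_{ijk}\in\Aut(x_k)\cong G$ on $x_k^*\cF$. Here the defining property of a twisted sheaf enters decisively: the requirement that the inertial action agree with the module action forces $G=\Aut(x_k)$ to act on $\cF$ through the canonical character, that is, by scalar multiplication by $\alpha_{ijk}$. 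This yields precisely the twisted cocycle relation $\varphi_{jk}\circ\varphi_{ij}=\alpha_{ijk}\cdot\varphi_{ik}$, so $(\cF_i,\varphi_{ij})$ is an $\alpha$-twisted sheaf; morphisms are sent to the induced morphisms on pullbacks.

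For the reverse functor I would invoke descent. An $\alpha$-twisted sheaf $(\cF_i,\varphi_{ij})$ is the same datum as a family of sheaves on the covering $\{x_i\colon U_i \to \sX\}$ together with a gluing isomorphism on the two-fold fibre product whose failure to satisfy the cocycle condition on the three-fold fibre product is measured by $\alpha_{ijk}$. Because the covering maps factor through $BG$, the two projections $U_i\times_{\sX}U_j\to U_i,U_j$ are intertwined by the $G$-action, and the weight-one behaviour of a twisted sheaf absorbs the factor $\alpha_{ijk}$ so that the twisted Čech relation becomes the honest descent cocycle condition for a sheaf on $\sX$. Faithfully flat descent along $\{x_i\colon U_i\to\sX\}$ then produces an $\cO_{\sX}$-module, which is $\sX$-twisted because the $G$-action encoded in its descent datum is the canonical one.

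The two functors are quasi-inverse essentially by construction: composing them returns the original descent datum up to the canonical isomorphisms $x_i^*\cF\cong\cF_i$, and both sides are full subcategories of a single category of descent data, so full faithfulness is automatic. The main obstacle, and the step requiring genuine care, is the bookkeeping in the second paragraph: one must check that the passage from the stack-theoretic automorphism $\alpha_{ijk}\in\Aut(x_k)$ to the scalar $\alpha_{ijk}\in\Gamma(U_{ijk},G)$ acting on sections of $\cF$ is exactly the identification furnished by the banding $G_{\sX}\xrightarrow{\sim}\sI(\sX)$, so that the twisted condition ``inertial action $=$ module action'' is converted, with no inversion or ordering error, into the Čech relation $\varphi_{jk}\circ\varphi_{ij}=\alpha_{ijk}\cdot\varphi_{ik}$. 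Keeping the $2$-categorical composition conventions consistent between the gerbe side and the Čech side is where the equivalence is truly verified rather than merely asserted.
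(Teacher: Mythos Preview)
The paper does not supply its own proof of this proposition; it merely records the statement and cites \cite[Lemma~2.10]{De_Jong_05} for the argument. Your proposal therefore goes beyond what the paper does, and the approach you outline --- choosing local sections $x_i$ of the gerbe to obtain an fppf covering $\{x_i\colon U_i\to\sX\}$, identifying the Čech $2$-cocycle $\alpha_{ijk}$ with the discrepancy $\psi_{jk}\circ\psi_{ij}\circ\psi_{ik}^{-1}$ via the banding, and then matching both categories with descent data along this covering --- is exactly the standard proof and is essentially what de~Jong does in the cited reference. The key observation you isolate, that the condition ``inertial action $=$ module action'' translates the stack-theoretic automorphism $\alpha_{ijk}\in\Aut(x_k)$ into scalar multiplication by $\alpha_{ijk}\in\Gamma(U_{ijk},G)$ on sections, is precisely the content of the equivalence, and you are right that the only delicate point is keeping the sign and ordering conventions consistent between the banding isomorphism and the Čech cocycle.

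One small remark: your phrase ``the canonical character'' implicitly assumes $G$ comes with a distinguished embedding into $\GG_m$ (which is the case for the $G=\GG_m$ or $\mu_n$ used in the paper). For a general abelian sheaf $G$ the paper's definition of ``module action'' would need such a character to make sense, so this is a harmless assumption in context but worth making explicit.
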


\begin{proposition}[{\cite[Proposition~1.2.10]{Caldararu_00}}]\label{operation}
    Let $\cF$ be an $\alpha$-twisted sheaf and $\cG$ a $\beta$-twisted sheaf on $X$. Then: $\cF \otimes \cG$ is an $\alpha \beta$-twisted sheaf, and $\cHom(\cF, \cG)$ is an $\alpha^{-1}\beta$-twisted sheaf.
\end{proposition}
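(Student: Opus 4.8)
The plan is to verify the two cocycle conditions directly from the Čech description of twisted sheaves recalled above, since each statement reduces to a bookkeeping computation once the gluing data on $\cF \otimes \cG$ and on $\cHom(\cF,\cG)$ are chosen in the natural way. Fix an étale cover $\{U_i \to X\}$ on which both $\alpha$ and $\beta$ are represented by Čech $2$-cocycles $\alpha_{ijk}$ and $\beta_{ijk}$ valued in $G$, and write $\cF = (\cF_i, \varphi_{ij})$ with $\varphi_{jk}\circ\varphi_{ij} = \alpha_{ijk}\cdot\varphi_{ik}$ and $\cG = (\cG_i, \psi_{ij})$ with $\psi_{jk}\circ\psi_{ij} = \beta_{ijk}\cdot\psi_{ik}$. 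The structural fact I will use throughout is that for $G = \GG_m$ or $G = \mu_n$ the $G$-action on sections is scalar multiplication by a unit of the structure sheaf, hence central in the relevant sheaves of endomorphisms; this centrality is precisely what permits the twisting factors to be extracted from compositions of the gluing isomorphisms.

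For the tensor product I would equip $\cF_i \otimes_{\cO_{U_i}} \cG_i$ with the gluing isomorphisms $\varphi_{ij}\otimes\psi_{ij}$. Composing over a triple overlap $U_{ijk}$ and using functoriality of $\otimes$ gives $(\varphi_{jk}\otimes\psi_{jk})\circ(\varphi_{ij}\otimes\psi_{ij}) = (\varphi_{jk}\circ\varphi_{ij})\otimes(\psi_{jk}\circ\psi_{ij})$, which by hypothesis equals $(\alpha_{ijk}\varphi_{ik})\otimes(\beta_{ijk}\psi_{ik})$. Pulling the scalars out yields $\alpha_{ijk}\beta_{ijk}\cdot(\varphi_{ik}\otimes\psi_{ik})$, and since $\alpha_{ijk}\beta_{ijk}$ is exactly a Čech representative of $\alpha\beta$, the collection $(\cF_i\otimes\cG_i, \varphi_{ij}\otimes\psi_{ij})$ is an $\alpha\beta$-twisted sheaf.

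For the Hom-sheaf I would glue the local sheaves $\cHom(\cF_i,\cG_i)$ by the conjugation maps $\chi_{ij}(f) = \psi_{ij}\circ f\circ\varphi_{ij}^{-1}$, each an isomorphism $\cHom(\cF_i,\cG_i)|_{U_{ij}} \xrightarrow{\sim} \cHom(\cF_j,\cG_j)|_{U_{ij}}$. Expanding over $U_{ijk}$ gives $\chi_{jk}(\chi_{ij}(f)) = (\psi_{jk}\psi_{ij})\circ f\circ(\varphi_{jk}\varphi_{ij})^{-1}$; substituting the two cocycle relations produces $\beta_{ijk}\psi_{ik}\circ f\circ\alpha_{ijk}^{-1}\varphi_{ik}^{-1}$, and collecting the scalars by centrality yields $\alpha_{ijk}^{-1}\beta_{ijk}\cdot\chi_{ik}(f)$. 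Hence $\cHom(\cF,\cG)$ is $\alpha^{-1}\beta$-twisted.

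These computations are routine, and the only point requiring genuine care is the centrality of the $G$-action, namely that the twisting scalars $\alpha_{ijk},\beta_{ijk}$ commute with the gluing isomorphisms so that they can be extracted and multiplied; for $G=\GG_m$ and $G=\mu_n$ this is immediate because the action is through central units. I would close by remarking that the assignments are independent of the chosen cocycle representatives up to canonical isomorphism: replacing $(\alpha_{ijk})$ by a cohomologous cocycle alters the $\varphi_{ij}$ by a coboundary that cancels in both the tensor and the Hom formulas, so the constructions descend to well-defined operations on isomorphism classes of twisted sheaves, compatibly with the equivalence of the previous proposition between Čech-twisted and $\sX$-twisted sheaves.
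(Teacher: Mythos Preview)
Your proof is correct and is precisely the standard \v{C}ech-cocycle verification. The paper does not supply its own argument for this proposition; it is stated with a citation to \cite[Proposition~1.2.10]{Caldararu_00}, where the same direct computation you wrote out appears. There is nothing to compare: your approach \emph{is} the canonical one.
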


\begin{remark}
    The equality $\Br(X) = \Br'(X)$ between the Azumaya and cohomological Brauer groups is not known in general. However, it is known to hold for quasi-projective schemes \cite{De_Jong_05}. Under this assumption, by \cite{Edidin_Hassett_Kresch_Vistoli_01}, every cohomological Brauer class corresponds to an Azumaya algebra, i.e. each class in $\Br(X)$ lies in the image of the Brauer map $H^1(X, \PGL_n) \rightarrow H^2(X, \GG_m).$
\end{remark}

\begin{proposition}[{\cite[Proposition~3.1.2.1]{Lieblich_08}}]\label{existtwist}
    Let $\alpha \in \Br(X)$ be a Brauer class. Then $\alpha$ lies in the image of the Brauer map $H^1(X, \PGL_n) \rightarrow H^2(X, \GG_m)$ if and only if there exists a locally free $\alpha$-twisted sheaf on $X$ of rank $n$ with trivial determinant.
\end{proposition}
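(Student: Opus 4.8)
The plan is to prove the two implications separately by realizing the Brauer map through the boundary maps of the short exact sequences
\[
1 \to \GG_m \to \GL_n \to \PGL_n \to 1 \qquad\text{and}\qquad 1 \to \mu_n \to \SL_n \to \PGL_n \to 1,
\]
and to pass between a $\GG_m$-gerbe and a $\mu_n$-gerbe via the inclusion $\iota\colon\mu_n \hookrightarrow \GG_m$. The trivial-determinant condition is exactly what forces the use of $\SL_n$ rather than $\GL_n$, since a bundle associated to an $\SL_n$-torsor has canonically trivial determinant, whereas a bare $\GL_n$-lift would only produce a determinant pulled back from some (possibly nontrivial) line bundle on $X$.

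For the reverse implication I would start from a locally free $\alpha$-twisted sheaf $\cE$ of rank $n$ and form $\cEnd(\cE) = \cHom(\cE,\cE)$. By Proposition~\ref{operation} this is $\alpha^{-1}\alpha$-twisted, hence an honest (untwisted) Azumaya algebra of degree $n$ on $X$, equivalently a $\PGL_n$-torsor, whose image under the boundary map $\partial_{\GG_m}\colon H^1(X,\PGL_n)\to H^2(X,\GG_m)$ is exactly $\alpha$. This exhibits $\alpha$ in the image of the Brauer map, and notably requires no hypothesis on the determinant.

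For the forward implication I would first record that the two sequences above form a commutative ladder with vertical maps induced by $\iota$ and $\SL_n\hookrightarrow\GL_n$, giving $\partial_{\GG_m} = \iota_*\circ\partial_{\mu}$, where $\partial_\mu\colon H^1(X,\PGL_n)\to H^2(X,\mu_n)$ is the boundary of the $\SL_n$-sequence and $\iota_*\colon H^2(X,\mu_n)\to H^2(X,\GG_m)$ is induced by $\iota$. Given a $\PGL_n$-torsor $P$ with $\partial_{\GG_m}[P]=\alpha$, I would set $\beta := \partial_\mu[P]\in H^2(X,\mu_n)$, so that $\iota_*\beta=\alpha$, and let $\cG\to X$ be the $\mu_n$-gerbe representing $\beta$. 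Because $\partial_\mu[P]$ is precisely the obstruction to lifting $P$ along $\SL_n\to\PGL_n$, the pullback of $P$ to $\cG$ acquires a canonical reduction to an $\SL_n$-torsor; the associated vector bundle is then a locally free $\cG$-twisted sheaf $\cW$ of rank $n$ with canonically trivial determinant. (This is where $n$-torsion of $\alpha$ is used implicitly: the factorization of the Brauer map through $H^2(X,\mu_n)$, together with the Kummer sequence, guarantees that such a lift $\beta$ exists.)

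The final and most delicate step is to transport $\cW$ to the $\GG_m$-gerbe $\sX\to X$ representing $\alpha=\iota_*\beta$. Since $\sX$ is obtained from $\cG$ by extending the band along $\iota$, there is a morphism $\cG\to\sX$ over $X$, and I expect it to induce an equivalence between weight-$1$ $\cG$-twisted sheaves and weight-$1$ $\sX$-twisted (that is, $\alpha$-twisted) sheaves preserving local freeness, rank, and determinant; applying it to $\cW$ then yields the desired rank-$n$ locally free $\alpha$-twisted sheaf with trivial determinant. The hard part will be this comparison: one must check that band extension along $\mu_n\hookrightarrow\GG_m$ genuinely identifies the two twisted-sheaf categories compatibly with the determinant, so that triviality of $\det\cW$ on $\cG$ is inherited by the resulting $\alpha$-twisted sheaf on $X$.
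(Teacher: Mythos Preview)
The paper does not give its own proof of this proposition; it is stated with a citation to \cite[Proposition~3.1.2.1]{Lieblich_08} and no argument is supplied.  Your proposal is correct and is essentially the standard argument (and the one in Lieblich's cited paper): the reverse implication via $\cEnd(\cE)$ and the forward implication via the $\SL_n$–sequence and a $\mu_n$–lift of the gerbe.  The step you flag as delicate—transporting the weight-$1$ sheaf along the band extension $\mu_n \hookrightarrow \GG_m$—is indeed the only point requiring care, but it is a standard fact (see e.g.\ \cite[\S2]{Lieblich_07}) that this induces an equivalence of categories of $1$-twisted sheaves preserving rank and determinant, so the trivial determinant of $\cW$ on $\cG$ carries over to the $\alpha$-twisted sheaf on $X$ as you anticipate.
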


\subsection{Essentially trivial gerbes}
In this subsection, we study the special case of essentially trivial gerbes. Twisted sheaves arising from such gerbes behave in many ways like ordinary sheaves, and their moduli stacks retain much of the structure familiar from the classical untwisted theory.

\begin{definition}
    Let $\alpha \in H^2(X, \mu_n)$ be a $\mu_n$-gerbe. We say that $\alpha$ is \emph{essentially trivial} if its image under the natural map $H^2(X, \mu_n) \rightarrow H^2(X, \GG_m)$ is zero. Conversely, $\alpha$ (or the corresponding $\mu_n$-gerbe) is said to be \emph{optimal} if the associated $\GG_m$-gerbe has order exactly $n$ in $H^2(X, \GG_m)$.
\end{definition}

\begin{ex}
    Let $\cL$ be a line bundle on $X$. The \emph{$n$th root gerbe} of $\cL$ is the stack whose objects over a scheme $T$ are pairs $(\cM, \varphi)$, where $\cM$ is a line bundle on $X \times T$ and $\varphi : \cM^{\otimes n} \xrightarrow{\;\sim\;} \cL_T$ is an isomorphism of line bundles. This is an essentially trivial $\mu_n$-gerbe on $X$.
\end{ex}

\begin{lemma}[{\cite[Lemma~2.3.4.2]{Lieblich_07}}]\label{esstriv}
    A $\mu_n$-gerbe $f \colon \sX \to X$ is essentially trivial if and only if there exists an invertible $\sX$-twisted sheaf $\cL$ on $\sX$.
\end{lemma}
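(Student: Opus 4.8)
The plan is to deduce the equivalence from the Kummer sequence, using the identification of essentially trivial $\mu_n$-gerbes with root gerbes (the Example above) as the geometric backbone and the cocycle description of twisted sheaves as the computational engine. Throughout I write $f\colon \sX\to X$ for the gerbe and fix an étale cover $\{U_i\to X\}$ together with a $\mu_n$-valued Čech $2$-cocycle $\alpha_{ijk}$ representing $\alpha$; the key initial observation is that the image of $\alpha$ under $H^2(X,\mu_n)\to H^2(X,\GG_m)$ is represented by the \emph{same} cochain $\alpha_{ijk}$, now read as $\GG_m$-valued via $\mu_n\hookrightarrow\GG_m$.

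For the implication that an invertible twisted sheaf forces essential triviality, I would start from an invertible $\sX$-twisted sheaf and transport it, via the equivalence between the gerbe-theoretic and Čech formulations recalled above, to a datum $(\cM_i,\varphi_{ij})$ in which each $\cM_i$ is a line bundle on $U_i$ and $\varphi_{jk}\circ\varphi_{ij}=\alpha_{ijk}\cdot\varphi_{ik}$. After refining the cover so that each $\cM_i$ is trivial, the $\varphi_{ij}$ become sections of $\GG_m$ over $U_{ij}$ and the cocycle identity becomes $\alpha_{ijk}=\varphi_{jk}\varphi_{ij}\varphi_{ik}^{-1}$, exhibiting $\alpha_{ijk}$ as a $\GG_m$-coboundary. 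Hence the image of $\alpha$ in $H^2(X,\GG_m)$ vanishes and $\alpha$ is essentially trivial. The converse is the same computation run backwards: essential triviality means, by exactness of the Kummer sequence
\[
\Pic(X)=H^1(X,\GG_m)\xrightarrow{\ \delta\ }H^2(X,\mu_n)\longrightarrow H^2(X,\GG_m),
\]
that $\alpha=\delta(\cL)$ for some line bundle $\cL$ on $X$, so a $\GG_m$-cochain $\varphi_{ij}$ with $(\delta\varphi)_{ijk}=\alpha_{ijk}$ exists, and then $(\cO_{U_i},\varphi_{ij})$ is an invertible $\alpha$-twisted sheaf.

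It is worth recording the intrinsic form of the converse, since it makes the geometry transparent and isolates the one delicate point. Essentially trivial $\mu_n$-gerbes are exactly the $n$th root gerbes $\sqrt[n]{\cL}$ of the Example above, and on $\sqrt[n]{\cL}$ the universal root $\cM$, with $\cM^{\otimes n}\cong f^*\cL$, is an invertible twisted sheaf; conversely, given any invertible $\sX$-twisted sheaf $\cM$, the inertia $\mu_n$ acts on $\cM^{\otimes n}$ through the $n$th power of the tautological character, hence trivially, so $\cM^{\otimes n}$ descends to a line bundle $\cL$ on $X$ and $\cM$ realizes $\sX\cong\sqrt[n]{\cL}$. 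The only step requiring genuine care is this bookkeeping of weights together with the refinement of the étale cover needed to trivialize the local pieces $\cM_i$; once these are handled, both directions collapse to the single statement that an invertible $\sX$-twisted sheaf is precisely a $\GG_m$-valued trivialization of the defining $\mu_n$-cocycle, which is why the obstacle here is purely formal rather than substantive.
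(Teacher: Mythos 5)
Your proof is correct. Note that the paper itself gives no argument for this lemma: it is quoted from Lieblich \cite[Lemma~2.3.4.2]{Lieblich_07}, where the proof proceeds intrinsically through the associated $\GG_m$-gerbe $\sX\wedge^{\mu_n}\GG_m$, using that a $\GG_m$-gerbe is trivial precisely when it carries an invertible twisted sheaf, so that essential triviality of $\alpha$ is equated with triviality of the induced $\GG_m$-gerbe and the invertible twisted sheaf is transported along the extension of structure group. Your route is the Čech-level unwinding of the same mechanism, and what it buys is explicitness: the statement that an invertible $\sX$-twisted sheaf is exactly a $\GG_m$-valued trivialization of the defining $\mu_n$-cocycle makes the equivalence a one-line coboundary computation, and your intrinsic coda (inertia acts on $\cM^{\otimes n}$ with weight $n$, hence trivially, so $\cM^{\otimes n}$ descends and realizes $\sX$ as a root gerbe) recovers Proposition~\ref{Kummer} of the paper along the way, which the abstract argument does not exhibit. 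Two points deserve one line of justification each in a polished write-up. First, in the converse direction you pass from vanishing of the image of $\alpha$ in $H^2(X,\GG_m)$ (derived-functor cohomology, as the paper uses throughout) to the existence of a Čech $1$-cochain $\varphi_{ij}$ with $(\delta\varphi)_{ijk}=\alpha_{ijk}$ on a refinement; this needs the comparison $\check{H}^2(X,\GG_m)\cong H^2(X,\GG_m)$, which holds here by Artin's theorem since $X$ is quasi-projective (alternatively, hypercovers). Second, in the forward direction, after trivializing the $\cM_i$ by isomorphisms $\psi_i$, one should check that the modified transition maps $\psi_j\varphi_{ij}\psi_i^{-1}$ satisfy the twisted cocycle condition with the \emph{same} $\alpha_{ijk}$ (the $\psi$'s cancel), so that the coboundary presentation of $\alpha_{ijk}$ is genuine; this is immediate but is the step where a sign or convention error would hide. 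Neither point is a gap, only bookkeeping to make explicit.
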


\begin{proposition}[{\cite[Proposition~2.3.4.4]{Lieblich_07}}]\label{Kummer}
    The cohomology of the Kummer exact sequence
    \[
    1 \longrightarrow \mu_n \longrightarrow \GG_m \xrightarrow{(\cdot)^n} \GG_m \longrightarrow 1
    \]
    yields an exact sequence
    \[
    0 \longrightarrow \Pic(X)/n\Pic(X) \xrightarrow{\;\partial\;} H^2(X, \mu_n) \longrightarrow H^2(X, \GG_m)[n] \longrightarrow 0.
    \]
    Then the cohomology class of any essentially trivial $\mu_n$-gerbe $\alpha \in H^2(X,\mu_n)$ equals the image under $\partial$ of the class of a line bundle of the form $\cL^{\otimes n}$, where $\cL$ is an invertible $\sX$-twisted sheaf. Consequently, every essentially trivial $\mu_n$-gerbe is the $n$th-root gerbe of some line bundle on~$X$.
\end{proposition}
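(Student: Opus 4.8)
The plan is to treat the two assertions separately: derive the short exact sequence from the long exact sequence of the Kummer sequence, and then identify the essentially trivial classes with the image of $\partial$ and make the line bundle explicit. First I would apply étale cohomology to the Kummer sequence. Using $H^1(X,\GG_m)=\Pic(X)$ and the fact that the $n$th-power map on $\GG_m$ induces multiplication by $n$ on each $H^i(X,\GG_m)$, the relevant segment reads
\[
\Pic(X)\xrightarrow{\,\cdot n\,}\Pic(X)\xrightarrow{\,\partial\,}H^2(X,\mu_n)\longrightarrow H^2(X,\GG_m)\xrightarrow{\,\cdot n\,}H^2(X,\GG_m).
\]
The cokernel of the first map is $\Pic(X)/n\Pic(X)$ and the kernel of the last is $H^2(X,\GG_m)[n]$; extracting these yields the stated short exact sequence with $\partial$ injective.

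For the second assertion, note that by definition $\alpha$ is essentially trivial exactly when its image in $H^2(X,\GG_m)$ vanishes, which by exactness of the sequence above is equivalent to $\alpha\in\im(\partial)$. To realize the corresponding class by a line bundle, I would invoke Lemma~\ref{esstriv} to obtain an invertible $\sX$-twisted sheaf $\cL$. Fixing a Čech representative $(\alpha_{ijk})$ of $\alpha$ valued in $\mu_n$ and local trivializations $\cL_i\cong\cO_{U_i}$, the gluing data of $\cL$ become units $\lambda_{ij}\in\GG_m(U_{ij})$ satisfying the twisted cocycle relation $\lambda_{ij}\lambda_{jk}\lambda_{ik}^{-1}=\alpha_{ijk}$. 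Then $\cL^{\otimes n}$ has gluing data $\lambda_{ij}^{\,n}$, and since the $\alpha_{ijk}$ are genuine $n$th roots of unity we have $(\lambda_{ij}\lambda_{jk}\lambda_{ik}^{-1})^n=\alpha_{ijk}^{\,n}=1$; hence these satisfy the ordinary cocycle condition and $N:=\cL^{\otimes n}$ descends to an honest line bundle on $X$, compatibly with Proposition~\ref{operation} (the twist $\alpha^n$ of $\cL^{\otimes n}$ is trivial).

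Finally I would compute $\partial([N])$ at the cocycle level to match it with $\alpha$. The transition functions of $N$ are $g_{ij}=\lambda_{ij}^{\,n}$, and the Kummer boundary is obtained by choosing étale-local $n$th roots of the $g_{ij}$ and taking their $\mu_n$-valued coboundary; choosing the roots $\lambda_{ij}$ returns precisely $\lambda_{ij}\lambda_{jk}\lambda_{ik}^{-1}=\alpha_{ijk}$, so $\partial([N])=\alpha$. Changing $\cL$ or the trivializations alters $N$ only by an $n$th power, so $[N]$ is well defined in $\Pic(X)/n\Pic(X)$, consistent with the injectivity of $\partial$; and since the $n$th root gerbe $\sqrt[n]{N}$ has class $\partial([N])$ by this same computation and banded $\mu_n$-gerbes are classified by their class in $H^2(X,\mu_n)$, it follows that $\sX\cong\sqrt[n]{N}$. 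I expect the main obstacle to be the bookkeeping in this last step: reconciling the abstract connecting homomorphism $\partial$ with the concrete root-gerbe construction requires carefully tracking the chosen $n$th roots $\lambda_{ij}$ and verifying that the twisted-sheaf cocycle of $\cL$ literally realizes the Čech representative of $\partial([N])$. The descent of $\cL^{\otimes n}$ and the well-definedness modulo $n$th powers are routine once the cocycles are in place.
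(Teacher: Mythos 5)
Your proof is correct. The paper itself gives no argument for this proposition—it is quoted directly from \cite[Proposition~2.3.4.4]{Lieblich_07}—so your write-up supplies a proof where the paper defers to the literature, and your argument is the standard one: extract the short exact sequence from the Kummer long exact sequence, use Lemma~\ref{esstriv} to produce the invertible $\sX$-twisted sheaf $\cL$, and verify at the \v{C}ech level that the transition data $\lambda_{ij}$ of $\cL$ are simultaneously $n$th roots of the transition functions of $\cL^{\otimes n}$ and a realization of the cocycle $\alpha_{ijk}$, so that $\partial([\cL^{\otimes n}])=\alpha$ and $\sX$ is the $n$th-root gerbe of $\cL^{\otimes n}$ by the classification of banded $\mu_n$-gerbes. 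The only points worth flagging are minor: depending on the orientation convention in the twisted cocycle condition one may land on $\alpha^{-1}$ rather than $\alpha$ (harmless, as it amounts to replacing $\cL$ by $\cL^{\vee}$), and the \v{C}ech representation of the class and the local trivialization of $\cL$ both require passing to a suitable refinement of the étale cover, which is available here since $X$ is quasi-projective.
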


\begin{construction}
    Let $\sX \to X$ be a $\mu_n$-gerbe representing a class $\alpha \in H^2(X,\mu_n)$. Following \cite{Knudsen_Mumford_76}, there is a determinant morphism
    \[
        \det \colon \cM_{\sX} \longrightarrow \Pic_{\sX/k},
    \]
    where $\cM_{\sX}$ denotes the moduli stack of vector bundles on $\sX$.

    If $\cF$ is a locally free $\alpha$-twisted sheaf of rank $m$ on $\sX$, then its determinant $\det(\cF)$ is an $\alpha^m$-twisted line bundle. In particular, when $m$ is equal to $n$, the class $\alpha^n$ is trivial, so $\det(\cF)$ is an untwisted line bundle on $X$.  Thus the determinant morphism induces a map
    \[
        \det \colon \cM^{\alpha}_{X}(n) \longrightarrow \Pic_{X/k},
    \]
    where $\cM^{\alpha}_{X}(n)$ is the moduli stack of rank $n$ $\alpha$-twisted vector bundles on $X$.

    For a line bundle $\cL$ on $X$, we write $\cM^{\alpha}_{X}(n,\cL)$ for the fibre of the determinant morphism over $\cL$, i.e.\ the moduli stack of rank $n$ $\alpha$-twisted vector bundles on $X$ with determinant $\cL$.
\end{construction}

The following lemma illustrates that twisted vector bundles on essentially trivial gerbes behave much like ordinary vector bundles.

\begin{lemma}\label{ln=l}
    If $\alpha \in H^2(X, \mu_n)$ is essentially trivial, then there is an equivalence of abelian categories between $\Coh(X)$ and $\Coh(X, \alpha)$. Moreover, for suitable line bundles $\cN$ and $\cL$ on $X$, there is an isomorphism of moduli stacks $\cM_X(n, \cN)\; \xrightarrow{\;\simeq\;}\; \cM^{\alpha}_X(n, \cN \otimes \cL).$
\end{lemma}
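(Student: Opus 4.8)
The plan is to build the equivalence by tensoring with a single invertible twisted sheaf and then to track its effect on determinants. Since $\alpha$ is essentially trivial, Lemma~\ref{esstriv} provides an invertible $\sX$-twisted sheaf, equivalently an invertible $\alpha$-twisted line bundle $\cK$ on $X$. By Proposition~\ref{operation}, the assignment $\cF \mapsto \cF \otimes \cK$ sends an untwisted (that is, $\alpha^0$-twisted) coherent sheaf to an $\alpha$-twisted one, hence defines a functor $\Coh(X) \to \Coh(X, \alpha)$; I would take its quasi-inverse to be $\cG \mapsto \cG \otimes \cK^{\vee}$, where $\cK^{\vee} = \cHom(\cK, \cO_X)$ is $\alpha^{-1}$-twisted (again by Proposition~\ref{operation}). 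The canonical isomorphisms $\cK \otimes \cK^{\vee} \cong \cO_X$ and $\cK^{\vee} \otimes \cK \cong \cO_X$ exhibit these two exact additive functors as mutually quasi-inverse, giving the claimed equivalence of abelian categories.

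Next I would restrict to locally free sheaves of rank $n$ and compute determinants. For a rank-$n$ bundle $\cF$ and a line bundle $\cK$ one has $\det(\cF \otimes \cK) \cong \det(\cF) \otimes \cK^{\otimes n}$, where $\det(\cF) = \cN$ is untwisted and $\cK^{\otimes n}$ is $\alpha^n$-twisted. Because $\alpha \in H^2(X, \mu_n)$ is $n$-torsion, $\alpha^n$ is trivial, so $\cK^{\otimes n}$ is an honest line bundle on $X$; setting $\cL := \cK^{\otimes n}$, the functor $-\otimes \cK$ therefore carries a rank-$n$ bundle of determinant $\cN$ to a rank-$n$ $\alpha$-twisted bundle of determinant $\cN \otimes \cL$, while $-\otimes \cK^{\vee}$ carries determinants back by $\cL^{-1}$. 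This gives a bijection on objects between $\cM_X(n, \cN)$ and $\cM^{\alpha}_X(n, \cN \otimes \cL)$.

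Finally I would promote this to an isomorphism of stacks. For a $k$-scheme $T$, tensoring a $T$-family on $\sX \times_k T$ with the pullback of $\cK$ along the projection $\sX \times_k T \to \sX$ sends a $T$-point of $\cM_X(n, \cN)$ to a $T$-point of $\cM^{\alpha}_X(n, \cN \otimes \cL)$; since pullback commutes with tensoring, this is compatible with base change along every $T' \to T$ and so defines a morphism of stacks, with two-sided inverse given by the same construction using $\cK^{\vee}$. The hard part will be the determinant bookkeeping: one must verify that $\det$ commutes with twisting as stated and, crucially, that the a priori $\alpha^n$-twisted object $\cK^{\otimes n}$ is canonically untwisted, so that the fixed-determinant target stack $\cM^{\alpha}_X(n, \cN \otimes \cL)$ is well defined. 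Once $\cK$ is in hand, the categorical equivalence and its upgrade to families are formal.
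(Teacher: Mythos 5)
Your proposal is correct and follows essentially the same route as the paper's proof: both obtain an invertible $\alpha$-twisted sheaf from Lemma~\ref{esstriv}, use tensoring with it (and its inverse) as the mutually quasi-inverse exact functors, and note that its $n$th tensor power is an honest line bundle $\cL$ on $X$ so that determinants shift by $\cN \mapsto \cN \otimes \cL$. Your added detail on upgrading to families via pullback of the twisted line bundle is a fine elaboration of what the paper leaves implicit, not a different argument.
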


\begin{proof}
    Let $\alpha \in H^2(X, \mu_n)$ be essentially trivial, and let $\cL_\alpha$ be an invertible $\alpha$-twisted sheaf as in Lemma~\ref{esstriv}. By Proposition~\ref{Kummer}, we have $\cL_\alpha^{\otimes n} \cong \cL$ for some line bundle $\cL$ on $X$. Then the claimed equivalence of categories is given by tensoring with $\cL_\alpha$:
    \[
    (-) \otimes \cL_\alpha \colon \Coh(X) \;\longrightarrow\; \Coh(X, \alpha),
    \]
    whose inverse is given by tensoring with $\cL_\alpha^{-1}$.

    Let $\cF$ be a vector bundle on $X$ of rank $n$ and determinant $\cN$. Applying the functor above, we obtain an $\alpha$-twisted vector bundle $\cF \otimes \cL_\alpha$ of rank $n$ and determinant \(\det(\cF \otimes \cL_\alpha) \cong \cN \otimes \cL.\) This induces the desired isomorphism of moduli stacks stated in the lemma.
\end{proof}

\begin{remark}
    In general, the situation is more subtle. Suppose there exists an $\alpha$-twisted locally free sheaf $\cF$ for some $\alpha \in H^2(X, \mu_n)$ not necessarily essentially trivial. Then, as shown in \cite[Proposition~1.3.6]{Caldararu_00}, there is an equivalence of categories between $\Coh(X, \alpha)$ and $\Coh(X, \cA)$, where $\Coh(X, \cA)$ denotes the category of coherent right $\cA$-modules on~$X$, and $\cA = \End(\cF)$ is the Azumaya algebra associated to $\alpha$. The equivalence is given by tensoring with $\cF^{\vee}$, as in the essentially trivial case; the distinction lies in the fact that here $\End(\cF) \cong \cA$ is nontrivial.
\end{remark}

\subsection{Character decomposition}
In classifying sheaves on a $\mu_n$-gerbe, one naturally encounters a decomposition according to the characters of $\mu_n$. In this subsection we record several instances of such character decompositions for different kinds of objects.

\begin{theorem}[{\cite[Corollary~4.11]{Bergh_Schnurer_19}}]\label{decomp}
    Let $X$ be an algebraic stack, and let $\sX \to X$ be a $\mu_n$-gerbe classified by a cohomology class $\alpha \in H^2(X, \mu_n)$. Then there is an equivalence of abelian categories
    \[
    \Coh(\sX) \;\xrightarrow{\;\sim\;}\;\prod_{i \in \ZZ/n\ZZ} \Coh(X, \alpha^i).
    \]
\end{theorem}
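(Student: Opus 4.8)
The plan is to exploit the canonical action of the inertia on every coherent sheaf on $\sX$. Since $\sX \to X$ is a $\mu_n$-gerbe, its inertia is $\sI(\sX) \cong \mu_{n,\sX}$, so each $\cF \in \Coh(\sX)$ carries a canonical action of $\mu_n$ through the automorphisms of the objects over which it is defined. The group scheme $\mu_n$ is diagonalizable with character group $\ZZ/n\ZZ$: writing $\mu_n = \Spec \cO_X[\ZZ/n\ZZ]$ and noting that comodules over a group algebra are the same as modules graded by the group, the inertial action produces a canonical decomposition $\cF \cong \bigoplus_{i \in \ZZ/n\ZZ} \cF_i$, where $\mu_n$ acts on the weight space $\cF_i$ through the $i$-th character $\chi^i$. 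Taking the weight-$i$ summand is an exact functor, since kernels and cokernels of $\mu_n$-equivariant maps are computed weight by weight, and every morphism in $\Coh(\sX)$ is automatically $\mu_n$-equivariant, so there are no nonzero maps between distinct weight spaces. Hence this already yields a decomposition $\Coh(\sX) \simeq \prod_{i \in \ZZ/n\ZZ} \Coh(\sX)_i$ into the full subcategories $\Coh(\sX)_i$ of pure-weight-$i$ sheaves, with no nonzero morphisms between distinct blocks.

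It remains to identify $\Coh(\sX)_i$ with the category $\Coh(X,\alpha^i)$ of $\alpha^i$-twisted sheaves, which I would do by tracking the gerbe gluing through the weight grading. Choose an étale cover $\{U_a \to X\}$ trivialising $\sX$, with $\sX|_{U_a} \cong B\mu_n \times U_a$ and transition data a Čech $2$-cocycle $(\alpha_{abc})$ valued in $\mu_n$ representing $\alpha$. Over each $B\mu_n \times U_a$ a coherent sheaf is a $\mu_n$-equivariant sheaf on $U_a$, whose weight-$i$ part is simply a sheaf on $U_a$. The point is that the character $\chi^i$ sends the transition element $\alpha_{abc} \in \mu_n$ to the scalar $\alpha_{abc}^{\,i} \in \GG_m$, so on the weight-$i$ pieces the gluing satisfies $\varphi_{bc}\circ\varphi_{ab} = \alpha_{abc}^{\,i}\cdot\varphi_{ac}$, which is exactly the cocycle condition defining an $\alpha^i$-twisted sheaf for the cocycle $(\alpha_{abc}^{\,i})$ representing $\alpha^i$. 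Thus $\cF \mapsto \cF_i$ gives an equivalence $\Coh(\sX)_i \simeq \Coh(X,\alpha^i)$; the case $i=0$ recovers $\Coh(X)$, and the construction is compatible with the tensor operation of Proposition~\ref{operation}, since multiplying weights adds the corresponding powers of $\alpha$. Assembling over all $i$ yields the claimed product decomposition.

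The main obstacle I anticipate is not the grading itself, which is formal once one recognises that $\mu_n$ is diagonalizable, but verifying that the grading interacts correctly with the \emph{nontrivial} gerbe band, i.e.\ that the $i$-th weight space is twisted by precisely $\alpha^i$ and not by some other power. This is the content of the cocycle computation above, and the delicate point is that it must hold globally and functorially: one must check that the local weight decompositions are compatible on overlaps and glue to a global decomposition, and that this decomposition commutes with base change along $\sX \to X$. For a base $X$ that is merely an algebraic stack rather than a scheme, I would phrase the argument intrinsically through fppf descent for the diagonalizable inertia, decomposing $\cO_{\sX}$-modules by the coaction of $\mu_{n,\sX}$, rather than relying on an explicit Čech trivialisation, so that the weight decomposition and its identification with $\alpha^i$-twisting are manifestly independent of the chosen cover. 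Once these compatibilities are established, exactness and the equivalence of abelian categories follow immediately from the semisimplicity of $\mu_n$-representations.
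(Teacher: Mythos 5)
The paper offers no proof of this statement---it is quoted verbatim from Bergh--Schn\"urer \cite[Corollary~4.11]{Bergh_Schnurer_19}---and your argument is precisely the standard one underlying that reference: the inertial $\mu_n$-action, the weight decomposition from diagonalizability of $\mu_n$ (valid in every characteristic, since $\mu_n$ is diagonalizable and hence linearly reductive even when $n$ is not invertible), and the cocycle computation identifying the weight-$i$ block with $\alpha^i$-twisted sheaves. Your closing remark is also the right call on the one genuine subtlety: over a general algebraic stack, or when $n$ is not invertible so that $\mu_n$ is not \'etale, one must run the decomposition via fppf descent from the $\mu_n$-coaction rather than an \'etale \v{C}ech trivialisation, which is exactly how the cited proof proceeds.
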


\begin{definition}
    Let $\alpha \in H^2(X, \mu_n)$. The \emph{$\alpha$-twisted Grothendieck group} $K_0^\alpha(X)$ is defined as the quotient of the free abelian group generated by all $\alpha$-twisted vector bundles on~$X$ by the subgroup generated by all elements of the form $[V] - [V'] - [V'']$ whenever there is an exact sequence 
    \[
    0 \longrightarrow V' \longrightarrow V \longrightarrow V'' \longrightarrow 0
    \]
    of $\alpha$-twisted vector bundles on~$X$.
\end{definition}

\begin{proposition}\label{decompofK}
    Let $\alpha \in H^2(X,\mu_n)$ and let $\sX \to X$ be the associated $\mu_n$-gerbe. Then the Grothendieck ring of $\sX$ is isomorphic to the $\ZZ/n\ZZ$–graded Grothendieck ring of $\alpha^i$-twisted sheaves on $X$:
    \[
    K_0(\sX)\;\cong\;\bigoplus_{i=0}^{n-1} K_0^{\alpha^i}(X).
    \]
\end{proposition}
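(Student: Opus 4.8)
The plan is to derive the $K$-theory statement directly from the category equivalence of Theorem~\ref{decomp}, which already gives us the decomposition at the level of abelian categories. The key observation is that the Grothendieck group $K_0(\sX)$ of a $\mu_n$-gerbe is built from exact sequences of coherent sheaves on $\sX$, and by Theorem~\ref{decomp} the category $\Coh(\sX)$ is equivalent to the product $\prod_{i \in \ZZ/n\ZZ} \Coh(X, \alpha^i)$. Since an equivalence of abelian categories preserves exact sequences, it induces an isomorphism on Grothendieck groups. Thus the first step is to invoke Theorem~\ref{decomp} to obtain
\[
K_0(\sX) \;\cong\; K_0\!\left(\prod_{i \in \ZZ/n\ZZ} \Coh(X, \alpha^i)\right).
\]

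Second, I would argue that the Grothendieck group of a finite product of abelian categories splits as a direct sum of the individual Grothendieck groups. This is a standard fact: in a product category, a short exact sequence is exact if and only if it is exact in each factor, and the factors are ``orthogonal'' in the sense that there are no nontrivial extensions or morphisms between objects supported in distinct factors. Hence any object decomposes uniquely as a tuple $(\cF_0, \dots, \cF_{n-1})$, the relations split componentwise, and one obtains
\[
K_0\!\left(\prod_{i \in \ZZ/n\ZZ} \Coh(X, \alpha^i)\right) \;\cong\; \bigoplus_{i=0}^{n-1} K_0\!\bigl(\Coh(X, \alpha^i)\bigr).
\]
I would then identify each summand $K_0(\Coh(X,\alpha^i))$ with the $\alpha^i$-twisted Grothendieck group $K_0^{\alpha^i}(X)$ defined above, noting that on a smooth projective variety every coherent twisted sheaf admits a finite resolution by twisted vector bundles, so the $K$-group generated by twisted vector bundles agrees with that generated by all coherent twisted sheaves.

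For the ring structure, I would check that the tensor product on $\Coh(\sX)$ matches the graded multiplication: by Proposition~\ref{operation}, tensoring an $\alpha^i$-twisted sheaf with an $\alpha^j$-twisted sheaf yields an $\alpha^{i+j}$-twisted sheaf, so the product respects the $\ZZ/n\ZZ$-grading, confirming that the isomorphism is one of graded rings.

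The main obstacle I anticipate is the identification in the second step, namely justifying that $K_0$ of the product category is the direct sum of the $K_0$'s of the factors and that resolution by twisted vector bundles lets us replace $K_0(\Coh(X,\alpha^i))$ by $K_0^{\alpha^i}(X)$. The resolution property requires that $X$ be smooth (so twisted sheaves have finite global dimension and the natural map from the vector-bundle $K$-group to the coherent-sheaf $K$-group is an isomorphism, analogous to the untwisted case), and one must verify that this holds in the twisted setting; invoking the equivalence of Lemma~\ref{ln=l} or the Azumaya-algebra description in the subsequent remark reduces this to the classical resolution theorem for modules over a smooth scheme.
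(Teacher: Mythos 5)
Your proposal is correct and follows essentially the same route as the paper's proof: invoke Theorem~\ref{decomp} to split $\Coh(\sX)$ into the product of the categories $\Coh(X,\alpha^i)$, deduce the additive isomorphism of Grothendieck groups, and obtain the $\ZZ/n\ZZ$-grading of the ring structure from Proposition~\ref{operation}. In fact you are slightly more careful than the paper: the paper's terse proof silently conflates $K_0^{\alpha^i}(X)$ (defined via twisted \emph{vector bundles}) with $K_0$ of the abelian category $\Coh(X,\alpha^i)$, and your observation that this identification requires finite resolutions of twisted coherent sheaves by twisted vector bundles on the smooth projective $X$ fills a genuine, if minor, gap in the written argument.
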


\begin{proof}
    The bijection follows from Theorem~\ref{decomp}. Since $K_0$ is defined via the Grothendieck group of the abelian category of coherent sheaves, this induces the additive isomorphism. The ring structure is given by tensor product, and by Proposition~\ref{operation} the twist satisfies \(\alpha^i \cdot \alpha^j = \alpha^{i+j}\), which gives the claimed grading.
\end{proof}

\begin{remark}
    The decomposition in Proposition~\ref{decompofK} is classical and appears in related forms in the literature.  In particular, an analogous decomposition for the Grothendieck group of a Brauer–Severi variety is proved in \cite[Theorem~4.1]{Quillen_73}.
\end{remark}

\subsection{Degrees of twisted sheaves on curves}
In this subsection, we define the degree of a twisted vector bundle on a curve and explain how this notion interacts with essentially trivial $\mu_n$-gerbes. We remark that, in contrast to the case of curves, the notion of degree is far more subtle on a smooth projective variety of dimension $\ge 2$, where no canonical degree map exists without the choice of a polarization.

\begin{definition}
    Let $\alpha \in H^2(C,\mu_n)$, and let $\cF$ be a locally free $\alpha$-twisted sheaf of rank $n$ on the curve $C$. We define the \emph{degree} of $\cF$ by
    \[
        \deg(\cF) := \frac{1}{n}\,\deg\!\bigl(c_1(\det(\cF))\bigr) \in \mathbb{Q},
    \]
    where $\det(\cF)$ is an $\alpha^n$-twisted line bundle, hence an actual line bundle on $C$.
    
    In particular, if $\cL$ is an $\alpha$-twisted line bundle such that $\cL^{\otimes n}$ is a genuine line bundle on $C$, then
    \[
        \deg(\cL) = \frac{1}{n}\deg\!\bigl(\cL^{\otimes n}\bigr).
    \]
\end{definition}

\begin{notation}
    We denote by $\Pic_\alpha(C)$ (resp.~$\Pic^i_\alpha(C)$) the set of $\alpha$-twisted line bundles (resp.~of degree~$i$) on~$C$. We refer to $\Pic_\alpha(C)$ (resp.~$\Pic^i_\alpha(C)$) as the \emph{$\alpha$-twisted Picard set} (resp.~the \emph{degree~$i$ $\alpha$-twisted Picard set}) of~$C$.
\end{notation}

\begin{proposition}
    Assume that $\sC \to C$ is an essentially trivial $\mu_n$-gerbe. Then the degree map $\deg \colon \Pic(\sC) \longrightarrow \tfrac{1}{n}\ZZ$ is a group homomorphism.
\end{proposition}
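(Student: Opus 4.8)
The plan is to factor the degree map through the classical degree on $C$ via the $n$-th power operation on line bundles. First I would record the relevant structure on $\Pic(\sC)$: it is an abelian group under tensor product, and by the character decomposition of Theorem~\ref{decomp} every invertible sheaf $L$ on $\sC$ is homogeneous of a single weight, i.e.\ an $\alpha^{w(L)}$-twisted line bundle for a well-defined $w(L)\in\ZZ/n\ZZ$ (rank one forces the inertial $\mu_n$-action on each fibre to be a single character), with $w(L\otimes M)=w(L)+w(M)$. Since the degree of such an $L$ is defined by $\deg(L)=\tfrac1n\deg(L^{\otimes n})$, the whole statement reduces to understanding the operation $L\mapsto L^{\otimes n}$.

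The key step is then to show that $(-)^{\otimes n}$ defines a group homomorphism $\Pic(\sC)\to\Pic(C)$ landing in genuine (untwisted) line bundles on $C$. For well-definedness, by Proposition~\ref{operation} the sheaf $L^{\otimes n}$ is $\alpha^{n\,w(L)}$-twisted; since $\alpha\in H^2(C,\mu_n)$ is $n$-torsion we have $\alpha^{n\,w(L)}=0$, so $L^{\otimes n}$ is untwisted of weight $0$ and therefore pulled back along $f\colon\sC\to C$ from a unique honest line bundle on $C$. That $(-)^{\otimes n}$ is a homomorphism is immediate from commutativity of $\otimes$ on the abelian group $\Pic(\sC)$, i.e.\ $(L\otimes M)^{\otimes n}\cong L^{\otimes n}\otimes M^{\otimes n}$.

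Finally I would combine this with the classical fact that the degree $\deg\colon\Pic(C)\to\ZZ$ on a smooth projective curve is additive. Then $\deg=\tfrac1n\,\deg_C\circ(-)^{\otimes n}$ is a composite of group homomorphisms followed by division by the constant $n$, hence a homomorphism $\Pic(\sC)\to\tfrac1n\ZZ$, yielding $\deg(L\otimes M)=\deg(L)+\deg(M)$. The essential triviality hypothesis enters only to make the target sharp: by Lemma~\ref{esstriv} and Proposition~\ref{Kummer} there is an invertible $\sC$-twisted sheaf $\cL_\alpha$ of weight $1$ with $\cL_\alpha^{\otimes n}\cong\cL$ for some line bundle $\cL$ on $C$, showing that $w$ is surjective and that $\deg$ genuinely attains non-integral values in $\tfrac1n\ZZ$; the homomorphism property itself holds for any $\mu_n$-gerbe.

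The only point demanding care, and thus the main obstacle, is the well-definedness in the second step: one must verify that \emph{both} the Brauer twist and the inertial weight of $L^{\otimes n}$ vanish, so that $L^{\otimes n}$ descends to an honest line bundle on $C$ whose degree is the ordinary additive one. Once that identification is pinned down, additivity propagates formally and no further computation is required.
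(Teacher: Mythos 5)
Your proof is correct and follows essentially the same route as the paper's: both rest on the character decomposition of $\Pic(\sC)$ into weight components, the fact that $L^{\otimes n}$ is an honest line bundle on $C$ because $\alpha$ is $n$-torsion, and the identity $(L\otimes M)^{\otimes n}\cong L^{\otimes n}\otimes M^{\otimes n}$ combined with additivity of the ordinary degree on $C$ (the paper just carries this out with explicit integers $\deg(\cL^{\otimes n}) = i + c_1 n$ rather than abstractly). Your repackaging --- factoring $\deg$ as $\tfrac{1}{n}\,\deg_C\circ(-)^{\otimes n}$ through a group homomorphism $\Pic(\sC)\to\Pic(C)$, and observing that essential triviality is needed only to make every weight component nonempty (the paper invokes it, via Lemma~\ref{esstriv}, to produce invertible $\alpha^i$-twisted sheaves) rather than for additivity itself --- is a correct mild sharpening of the paper's argument.
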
 

\begin{proof}
    Let $\alpha \in H^2(C, \mu_n)$ be an essentially trivial class representing the gerbe $\sC$. Then each power $\alpha^i$ is also essentially trivial for all $i \in \ZZ / \per(\alpha)\ZZ$. By Lemma~\ref{esstriv}, there exist invertible $\alpha^i$-twisted sheaves on $C$ for all such $i$.  

    Applying Theorem~\ref{decomp} and restricting to the subcategory of invertible sheaves, we obtain a decomposition
    \[
    \Pic(\sC) \;=\; \coprod_{i \in \ZZ / n\ZZ} \Pic_{\alpha^i}(C),
    \]
    since line bundles have rank~$1$.

    Assume that $\cL$ and $\cM$ are $\alpha^i$- and $\alpha^j$-twisted line bundles, respectively. Since $\alpha$ is essentially trivial, we have $\per(\alpha) = n$ in $H^2(C,\mu_n)$. Hence $\cL^{\otimes n}$ and $\cM^{\otimes n}$ are honest line bundles on $C$, say of degrees
    \[
    \deg(\cL^{\otimes n}) = i + c_1 n, \qquad\deg(\cM^{\otimes n}) = j + c_2 n,
    \]
    for some integers $c_1, c_2$.  

    Then $(\cL \otimes \cM)^{\otimes n}\;\cong\;\cL^{\otimes n} \otimes \cM^{\otimes n},$ which has degree
    \[
    \deg((\cL \otimes \cM)^{\otimes n}) = (i + c_1 n) + (j + c_2 n)= (i + j) + (c_1 + c_2) n.
    \]
    Therefore,
    \[
    \deg(\cL \otimes \cM)= \frac{i + j}{n} + (c_1 + c_2)= \deg(\cL) + \deg(\cM),
    \]
    showing that the degree map $\deg \colon \Pic(\sC) \to \frac{1}{n}\ZZ$ is a group homomorphism.
\end{proof}

\begin{proposition}
    The set $\Pic_\alpha(C)$ is nonempty if and only if $\alpha \in H^2(C, \mu_n)$ is essentially trivial. In this case, choose an $\alpha$-twisted line bundle $\cL_\alpha$ on $C$. Then tensoring with $\cL_\alpha^{\vee}$ induces a natural bijection
    \[
        \Pic_{\alpha}^i(C) \;\xrightarrow{\;\sim\;}\; 
        \Pic^{\,i - d}(C), \qquad 
        \cM_\alpha \longmapsto \cM_\alpha \otimes \cL_\alpha^{\vee},
    \]
    where $d = \deg(\cL_\alpha) \in \tfrac{1}{n}\mathbb{Z}$. In particular, the degree of an $\alpha$-twisted line bundle need not be integral.
\end{proposition}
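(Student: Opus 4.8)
The plan is to handle the two assertions in turn, using the nonemptiness criterion as the hypothesis that makes the bijection available. For the \emph{nonemptiness} equivalence, I would observe that an $\alpha$-twisted line bundle is exactly an invertible $\sC$-twisted sheaf on the $\mu_n$-gerbe $\sC \to C$ representing $\alpha$, under the equivalence between $\alpha$-twisted sheaves (defined via a \v{C}ech cocycle) and $\sC$-twisted sheaves recalled earlier. Hence $\Pic_\alpha(C) \neq \varnothing$ precisely when an invertible $\sC$-twisted sheaf exists, and Lemma~\ref{esstriv} identifies the latter condition with the essential triviality of $\alpha$. This settles the first claim immediately, in both directions, since the lemma is itself an equivalence.

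For the \emph{bijection}, assume $\alpha$ is essentially trivial, fix $\cL_\alpha \in \Pic_\alpha(C)$, and set $d = \deg(\cL_\alpha)$. By Proposition~\ref{operation} the dual $\cL_\alpha^{\vee} = \cHom(\cL_\alpha, \cO_C)$ is an $\alpha^{-1}$-twisted line bundle, so for any $\cM_\alpha \in \Pic^i_\alpha(C)$ the product $\cM_\alpha \otimes \cL_\alpha^{\vee}$ is $\alpha\cdot\alpha^{-1}$-twisted, i.e.\ an honest line bundle on $C$. I would then verify the degree shift $\deg(\cM_\alpha \otimes \cL_\alpha^{\vee}) = i - d$ and exhibit the candidate inverse $\cN \mapsto \cN \otimes \cL_\alpha$, sending $\Pic^{i-d}(C)$ back into $\Pic^i_\alpha(C)$; the two maps are mutually inverse because $\cL_\alpha^{\vee} \otimes \cL_\alpha \cong \cO_C$, so tensoring by one undoes the other up to canonical isomorphism.

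The degree bookkeeping is where the one genuine input enters. I would invoke the preceding proposition, which gives that $\deg \colon \Pic(\sC) \to \tfrac{1}{n}\ZZ$ is a group homomorphism on the essentially trivial gerbe $\sC$; together with $\deg(\cL_\alpha^{\vee}) = -\deg(\cL_\alpha) = -d$ (forced by $\cL_\alpha \otimes \cL_\alpha^{\vee} \cong \cO_C$ having degree $0$) this yields $\deg(\cM_\alpha \otimes \cL_\alpha^{\vee}) = i - d$. I would also record the compatibility that on an \emph{untwisted} line bundle the twisted degree reduces to the ordinary degree, so that the target is genuinely $\Pic^{i-d}(C)$ rather than some fractional-degree object. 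For the final non-integrality remark, I would appeal to Proposition~\ref{Kummer}: essential triviality writes $\alpha = \partial([\cL_\alpha^{\otimes n}])$, and $d = \tfrac{1}{n}\deg(\cL_\alpha^{\otimes n})$; whenever $\alpha \neq 0$ in $H^2(C,\mu_n)$ the line bundle $\cL_\alpha^{\otimes n}$ is not an $n$-th power, so its degree need not be divisible by $n$ and $d$ is then a proper fraction.

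The main obstacle is, frankly, a point of care rather than a difficulty: essentially everything is formal once the additivity of the degree map is imported from the preceding proposition, and the only subtlety is checking that the twisted degree specializes to the ordinary degree on untwisted bundles, so that the codomain of the bijection is correctly identified with $\Pic^{i-d}(C)$.
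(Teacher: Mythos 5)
Your proposal is correct and follows essentially the same route as the paper, whose proof simply cites Lemma~\ref{esstriv} for the nonemptiness criterion and Lemma~\ref{ln=l} for the tensoring equivalence; your explicit unwinding (duality via Proposition~\ref{operation}, mutual inverses, and the degree shift via the additivity of $\deg$ on $\Pic(\sC)$ from the preceding proposition) is exactly the content of those two lemmas spelled out at the level of line bundles. Your extra care in checking that the twisted degree restricts to the ordinary degree on untwisted bundles is a worthwhile detail the paper leaves implicit.
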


\begin{proof}
    This follows directly from Lemmas~\ref{esstriv} and ~\ref{ln=l}.
\end{proof}

\begin{proposition}[{\cite[Lemma~3.1.2.1]{Lieblich_07}}]\label{esstrivdeg}
    Let $\alpha \in H^2(C, \mu_n)$ be an essentially trivial class. Then there exists a natural surjective homomorphism $\varphi \colon H^2(C,\mu_n) \rightarrow \ZZ/n\ZZ$ which sends an essentially trivial $\mu_n$-gerbe to the degree modulo~$n$ of the $n$th power of any $\alpha$-twisted line bundle. Consequently, there exists an invertible $\alpha$-twisted sheaf on $C$ of degree $\tfrac{1}{n}\,\varphi(\alpha).$
\end{proposition}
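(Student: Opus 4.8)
The plan is to build $\varphi$ by base change to the algebraic closure, where the curve becomes simple, and then transport a degree map back to $C$. First I would pass to $\overline{C} = C_{\bar k}$. By Tsen's theorem $\Br(\overline{C}) = H^2(\overline{C}, \GG_m) = 0$, so the Kummer sequence of Proposition~\ref{Kummer} over $\overline{C}$ collapses to an isomorphism $H^2(\overline{C}, \mu_n) \cong \Pic(\overline{C})/n\Pic(\overline{C})$. Since the Jacobian $\Pic^0(\overline{C})$ is an $n$-divisible group and the degree sequence $0 \to \Pic^0(\overline{C}) \to \Pic(\overline{C}) \xrightarrow{\deg} \ZZ \to 0$ splits over $\bar k$, reduction of degree gives an isomorphism $\Pic(\overline{C})/n\Pic(\overline{C}) \xrightarrow{\;\sim\;} \ZZ/n\ZZ$. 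I then define $\varphi$ as the composite of the pullback $H^2(C,\mu_n) \to H^2(\overline{C}, \mu_n)$ with this isomorphism. As a composite of a pullback homomorphism with an isomorphism, $\varphi$ is a group homomorphism, and the construction is visibly natural.

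The second step is to verify the degree formula on essentially trivial classes. Given such an $\alpha$, Lemma~\ref{esstriv} furnishes an invertible $\alpha$-twisted sheaf $\cL_\alpha$, and Proposition~\ref{Kummer} identifies $\alpha = \partial([\cL_\alpha^{\otimes n}])$, with $\cL_\alpha^{\otimes n}$ an honest line bundle on $C$. Since $\partial$ commutes with pullback to $\overline{C}$ and degree is preserved under base change, I obtain $\varphi(\alpha) = \deg(\cL_\alpha^{\otimes n}) \bmod n$. Independence of the auxiliary choice of $\cL_\alpha$ follows because any two such twisted line bundles differ by an untwisted line bundle $\cM$, so their $n$th powers differ by $\cM^{\otimes n} \in n\Pic(C)$, which does not change the degree modulo $n$.

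For surjectivity I would exhibit a class mapping to a generator. A closed point $P_0$ of degree $1$ (a $k$-rational point, available in our setting) has cycle class $\partial([\cO(P_0)]) \in H^2(C,\mu_n)$, whose restriction to $\overline{C}$ is the class of a single $\bar k$-point, hence a generator of $\ZZ/n\ZZ$; thus $\varphi(\partial([\cO(P_0)])) = \deg(P_0) = 1$. This is precisely where the rational point enters. For the concluding existence statement, I write $\deg(\cL_\alpha^{\otimes n}) = \varphi(\alpha) + n m$ with the representative $\varphi(\alpha) \in \{0, \dots, n-1\}$, so that $\deg(\cL_\alpha) = \tfrac{1}{n}\varphi(\alpha) + m$; tensoring $\cL_\alpha$ with $\cO(-m P_0)$ then produces an invertible $\alpha$-twisted sheaf of degree exactly $\tfrac{1}{n}\varphi(\alpha)$.

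The main obstacle is the surjectivity together with the exact-degree realization. For a curve without a zero-cycle of degree coprime to $n$, the image of $\varphi$ is controlled by the index of $C$ and by the differentials of the Leray spectral sequence for $C \to \Spec k$, which need not vanish; the clean statement then genuinely relies on a degree-$1$ zero-cycle (here a rational point), both to force $\varphi$ onto $\ZZ/n\ZZ$ and to adjust the twisted degree to the prescribed value in $\tfrac{1}{n}\ZZ$. Everything else is formal: the base-change compatibility of $\partial$ and $\deg$, and the collapse of the Kummer sequence over $\bar k$.
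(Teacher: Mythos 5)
Your proof is correct, but it takes a genuinely different route to constructing $\varphi$ than the paper does. You build $\varphi$ on all of $H^2(C,\mu_n)$ as the composite of restriction to the geometric fibre with the isomorphism $H^2(C_{\bar k},\mu_n)\cong \Pic(C_{\bar k})/n\Pic(C_{\bar k})\cong \ZZ/n\ZZ$ supplied by Tsen's theorem and divisibility of the Jacobian — in effect the edge map $H^2(C,\mu_n)\to H^0(k,R^2f_*\mu_n)$ of the Leray spectral sequence — and then verify via base-change compatibility of $\partial$ and $\deg$ that on essentially trivial classes it computes $\deg(\cL_\alpha^{\otimes n})\bmod n$. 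The paper never leaves the base field: it defines $\varphi(\alpha):=\deg(\cL^{\otimes n})\bmod n$ directly on essentially trivial classes, with well-definedness resting on the injectivity of $\partial\colon \Pic(C)/n\Pic(C)\hookrightarrow H^2(C,\mu_n)$ from Proposition~\ref{Kummer}, so that the class $[\cL^{\otimes n}]$ in $\Pic(C)/n\Pic(C)$ is determined by $\alpha$; your alternative well-definedness argument — two invertible $\alpha$-twisted sheaves differ by an untwisted line bundle, whose $n$th power lies in $n\Pic(C)$ — is equally valid by Proposition~\ref{operation}. Your route buys more than the paper's proof actually delivers: a homomorphism visibly defined and natural on the \emph{whole} of $H^2(C,\mu_n)$, and an explicit surjectivity argument via the class $\partial([\cO(P_0)])$ of a rational point — two parts of the stated proposition that the paper's proof leaves implicit (or delegates to the citation of Lieblich, whose setting is a curve over an algebraically closed field where surjectivity is automatic). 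Your closing caveat is also well taken and consistent with the paper's conventions: the rational point enters exactly where the Leray sequence splits, and indeed the paper's own final step (``tensoring $\cL$ by a line bundle of degree $-c$'') tacitly makes the same index-one assumption that you make explicit when tensoring with $\cO(-mP_0)$. The concluding degree-normalization step is identical in both proofs.
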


\begin{proof}
    Since $\alpha$ is essentially trivial, Proposition~\ref{Kummer} provides an invertible $\alpha$-twisted sheaf $\cL$ such that $\cL^{\otimes n}$ is an ordinary line bundle on $C$. Assume the degree of $\cL^{\otimes n}$ to be $m\in \ZZ$. Because the map $\Pic(C)/n\Pic(C) \hookrightarrow H^2(C,\mu_n)$ is injective, we may define \(\varphi(\alpha) := m \bmod n \in \ZZ/n\ZZ.\)

    Now write \(\deg(\cL^{\otimes n}) = \varphi(\alpha) + cn\) for some integer \(c\). Hence we have $\deg(\cL) = \tfrac{1}{n}\varphi(\alpha) + c.$ Tensoring $\cL$ by a line bundle of degree $-c$ yields an $\alpha$-twisted line bundle of degree $\tfrac{1}{n}\varphi(\alpha)$, as claimed.
\end{proof}

\section{Moduli theory of vector bundles on $\mu_n$-gerbes over curves}
In this section, we begin by introducing the determinantal line bundle on moduli stacks of twisted vector bundles with fixed rank and determinant. We then recall the existence theorem for good moduli spaces of twisted sheaves on curves, as well as that of $\mu_n$-gerbes over curves. Finally, we present our approach to classifying moduli spaces of vector bundles on $\mu_n$-gerbes over curves, together with the examples promised in the introduction.

Throughout this section, we work over a field $k$ of characteristic $p \ge 0$, where we assume $p \ne 2$. Let $C$ be a smooth projective curve over $k$. Whenever $\mu_n$-gerbes are involved, we additionally assume $\gcd(n,p)=1$, so that $\mu_n$ is a finite étale group scheme over $k$.

\subsection{Determinantal line bundle and semistability}
\begin{construction}
    Let $\alpha \in H^2(C,\mu_n)$ be a $\mu_n$-gerbe. We have the diagram
    \[
    \begin{tikzcd}
        & C \times \cM^\alpha_{C/k}(n,d)
            \arrow[dl, "q"'] 
            \arrow[dr, "p"] & \\
        C & & \cM^\alpha_{C/k}(n,d),
    \end{tikzcd}
    \]
    where $q$ and $p$ denote the projections to the first and second factors, respectively.
    
    In the twisted setting, there is no universal vector bundle on $C \times \cM^\alpha_{C/k}(n,d)$, but there exists a universal $\alpha$-twisted sheaf $\cE_{\alpha,\univ}$ of rank $n$ and degree $d$ on $C \times \cM^\alpha_{C/k}(n,d)$.

    Let $V$ be an $\alpha$-twisted vector bundle on $C \times \cM^\alpha_{C/k}(n,d)$. We define the \emph{determinantal line bundle} on $\cM^\alpha_{C/k}(n,d)$ associated to $V$ by
    $$\cL_{\alpha,V}: = (\det\bR p_*(\cHom (V, \cE_{\alpha,\univ})))^\vee = (\det\bR p_*(V^\vee\otimes  \cE_{\alpha,\univ}))^\vee, $$
    where $p : C \times \cM^\alpha_{C/k}(n,d) \to \cM^\alpha_{C/k}(n,d)$ is the projection.

    Since $V^\vee \otimes \cE_{\alpha,\univ}$ is an untwisted coherent sheaf on $C \times \cM^\alpha_{C/k}(n,d)$ whose restriction to each fiber over $\cM^\alpha_{C/k}(n,d)$ is a vector bundle on $C$, the derived pushforward $\mathbf{R}p_*(V^\vee \otimes \cE_{\alpha,\univ})$ is a perfect complex of amplitude contained in $[0,1]$. Thus its determinant is a line bundle, and $\cL_{\alpha,V}$ is a well-defined line bundle on $\cM^\alpha_{C/k}(n,d)$.

    Assume that for every $k$-rational point $[E] \in \cM_C^{ss,\alpha}(n,d)(k)$ one has $\chi\bigl(\cHom(V_E,E)\bigr) = 0,$ where $V_E := V|_{C \times \{[E]\}}$. Equivalently, for each such $[E]$,
    \[
    \dim H^0\bigl(C,\cHom(V_E,E)\bigr)= \dim H^1\bigl(C,\cHom(V_E,E)\bigr).
    \]
    In this case the perfect complex $\mathbf{R}p_*\cHom(V,\cE_{\alpha,\univ})$ has rank zero on $\cM_C^{ss,\alpha}(n,d)$, and the determinantal line bundle $\cL_{\alpha,V}$ admits a canonical global section $s_V \in \Gamma\bigl(\cM_C^{ss,\alpha}(n,d), \cL_{\alpha,V}\bigr)$, which is locally given by the determinant of the differential in a two-term complex of locally free sheaves quasi-isomorphic to $\mathbf{R}p_*\cHom(V,\cE_{\alpha,\univ})$ \cite[\href{https://stacks.math.columbia.edu/tag/0FJI}{Tag 0FJI}]{stacks-project}.

    Finally, if $[E]$ is a closed point of $\cM^{\alpha}_{C/k}(n,d)$ and $V = q^*W$ for some vector bundle $W$ on $C$, where $q : C \times \cM^{\alpha}_{C/k}(n,d) \to C$ is the projection, then the fiber of $\cL_{\alpha,V}$ at $[E]$ is given by
    \[
        \cL_{\alpha,V}|_{[E]}\;\cong\;\det\bigl(H^0\bigl(C,\cHom(W,E)\bigr)\bigr)^{\vee}\otimes\det\bigl(H^1\bigl(C,\cHom(W,E)\bigr)\bigr).
    \]
    
    In what follows, we restrict to the case where $V = q^*W$ is the pullback of an $\alpha$-twisted vector bundle $W$ on $C$.
\end{construction}

\begin{lemma}\label{multdet}
    Given a short exact sequence of $\alpha$-twisted vector bundles on $C \times \cM^\alpha_{C/k}(n,d)$
    \[
    0 \longrightarrow V' \longrightarrow V \longrightarrow V'' \longrightarrow 0,
    \]
    the associated determinantal line bundles satisfy the multiplicative relation
    \[
    \cL_V \;\cong\; \cL_{V'} \otimes \cL_{V''}.
    \]
\end{lemma}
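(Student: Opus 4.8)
The plan is to deduce the multiplicative relation from the fundamental additivity property of the Knudsen–Mumford determinant functor on distinguished triangles of perfect complexes \cite{Knudsen_Mumford_76}. Recall that for a distinguished triangle $A \to B \to C \to A[1]$ in the derived category of perfect complexes, the determinant line bundles satisfy a canonical isomorphism $\det(B) \cong \det(A) \otimes \det(C)$. The entire argument therefore consists of producing such a triangle out of the given short exact sequence and then applying this property, while keeping track of the dual appearing in the definition of $\cL_V$.

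First I would apply the functor $\cHom(-, \cE_{\alpha,\univ})$ to the given short exact sequence. Since $V'$, $V$, $V''$ are $\alpha$-twisted vector bundles, hence locally free, the sheaf $\cExt^1(V'', \cE_{\alpha,\univ})$ vanishes and $\cHom(-, \cE_{\alpha,\univ})$ is exact; moreover, by Proposition~\ref{operation}, each term $\cHom(W, \cE_{\alpha,\univ}) \cong W^\vee \otimes \cE_{\alpha,\univ}$ is an \emph{untwisted} coherent sheaf, being the tensor product of an $\alpha^{-1}$-twisted and an $\alpha$-twisted bundle. This yields a short exact sequence of untwisted vector bundles on $C \times \cM^\alpha_{C/k}(n,d)$,
\[
0 \longrightarrow \cHom(V'', \cE_{\alpha,\univ}) \longrightarrow \cHom(V, \cE_{\alpha,\univ}) \longrightarrow \cHom(V', \cE_{\alpha,\univ}) \longrightarrow 0.
\]

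Next I would apply the derived pushforward $\bR p_*$. Since $\bR p_*$ is a triangulated functor, the sequence above, regarded as a distinguished triangle, maps to a distinguished triangle
\[
\bR p_*\cHom(V'', \cE_{\alpha,\univ}) \longrightarrow \bR p_*\cHom(V, \cE_{\alpha,\univ}) \longrightarrow \bR p_*\cHom(V', \cE_{\alpha,\univ}) \xrightarrow{\;+1\;}.
\]
As established in the construction of the determinantal line bundle, each of these derived pushforwards is a perfect complex of amplitude contained in $[0,1]$, so its determinant is a genuine line bundle. Applying the determinant functor and invoking its additivity on distinguished triangles gives
\[
\det \bR p_*\cHom(V, \cE_{\alpha,\univ}) \;\cong\; \det \bR p_*\cHom(V'', \cE_{\alpha,\univ}) \otimes \det \bR p_*\cHom(V', \cE_{\alpha,\univ}).
\]
Dualizing both sides and unwinding the definition $\cL_W = \bigl(\det \bR p_*\cHom(W, \cE_{\alpha,\univ})\bigr)^\vee$ then yields the desired isomorphism $\cL_V \cong \cL_{V'} \otimes \cL_{V''}$.

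The step requiring the most care is the first one: one must verify that applying $\cHom(-, \cE_{\alpha,\univ})$ both preserves exactness and lands in untwisted sheaves, so that the subsequent derived pushforward and determinant are honestly defined on ordinary (rather than twisted) perfect complexes. Once this compatibility of the twists is confirmed via Proposition~\ref{operation}, the remainder is a formal application of the additivity of the Knudsen–Mumford determinant, and no further genuine difficulty arises.
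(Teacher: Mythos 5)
Your proposal is correct and follows essentially the same route as the paper: apply $\cHom(-,\cE_{\alpha,\univ})$ to obtain a short exact sequence of (untwisted) vector bundles, then conclude by the multiplicativity of determinants of perfect complexes under $\bR p_*$. The paper cites this multiplicativity from the literature where you unwind it via the Knudsen--Mumford additivity on distinguished triangles, and your extra verification that the twists cancel (via Proposition~\ref{operation}) is exactly the implicit content of the paper's argument.
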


\begin{proof}
    Applying $\cHom(-,\cE_{\alpha,\univ})$ to the short exact sequence yields an exact sequence of vector bundles
    \[
    0 \longrightarrow \cHom(V'',\cE_{\alpha,\univ})\longrightarrow \cHom(V,\cE_{\alpha,\univ})\longrightarrow \cHom(V',\cE_{\alpha,\univ})\longrightarrow 0.
    \]
    By the multiplicativity of determinants of perfect complexes (\cite[Proposition~4.2]{Alper_Belmans_Bragg_Liang_Tajakka_22}), we obtain the desired isomorphism $\cL_V \;\cong\; \cL_{V'} \otimes \cL_{V''}.$
\end{proof}

\begin{proposition}\label{detline}
    We have the following properties of determinantal line bundles:
    \begin{enumerate}
        \item[(i)] The Picard group $\Pic(\cM^\alpha_C(r,\cL))$ is isomorphic to $\ZZ$. 
        \item[(ii)] The assignment $V \mapsto \cL_V$ is additive in short exact sequences of $\alpha$-twisted vector bundles, and therefore induces a group homomorphism 
        \[
            K_0^\alpha(C)\longrightarrow \Pic(\cM^\alpha_C(r,d)).
        \]
        Hence the isomorphism class of $\cL_V$ depends only on the rank and determinant of $V$.
        \item[(iii)] If $V$ and $W$ are $\alpha$-twisted vector bundles of the same rank and degree, then there exists a line bundle $\cN$ on $\Pic^d(C)$ such that $\cL_W \cong \cL_V \otimes \det^*\cN.$ In particular, if $\det(V)=\det(W)$, then $\cL_W \cong \cL_V$.
    \end{enumerate}
\end{proposition}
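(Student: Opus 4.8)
The plan is to bootstrap from the multiplicativity established in Lemma~\ref{multdet} together with the classical theory of the untwisted moduli space, handling the parts in the order (i), (ii), (iii); part~(i) feeds the fibrewise argument in (iii), while (ii) supplies its final assertion. For part~(i) I would reduce to the algebraically closed case. Over $\bar k$ the function field $\bar k(C)$ is $C_1$, so Tsen's theorem forces $\Br(C_{\bar k})=0$; hence $\alpha_{\bar k}$ is essentially trivial and Lemma~\ref{ln=l} identifies $\cM^\alpha_{C_{\bar k}}(r,\cL)$ with an ordinary fixed-determinant moduli space $\cM_{C_{\bar k}}(r,\cN)$, whose Picard group is $\ZZ$, generated by the determinantal theta bundle, by the classical computation of Drézet and Narasimhan. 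To descend to $k$ I would use that $\Pic(\cM^\alpha_C(r,\cL))$ injects into $\Pic(\cM^\alpha_{C_{\bar k}}(r,\cL))^{\Gal(\bar k/k)}$ (Hilbert~90 kills the relevant $H^1$); the Galois action fixes the canonical ample generator, so the target is $\ZZ$, and the image is nonzero because it contains the class of the determinantal line bundle $\cL_V$, which is defined over $k$ by construction. Thus $\Pic(\cM^\alpha_C(r,\cL))$ is a nonzero subgroup of $\ZZ$, hence $\cong \ZZ$.

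For part~(ii), additivity in short exact sequences is precisely Lemma~\ref{multdet}, so $V \mapsto \cL_V$ factors through a homomorphism $K_0^\alpha(C) \to \Pic(\cM^\alpha_C(r,d))$. It then remains to see that the class of a twisted bundle in $K_0^\alpha(C)$ is determined by its rank and determinant, the twisted analogue of the identification $K_0(C) \cong \ZZ \oplus \Pic(C)$ for a smooth curve. I would obtain this from the character decomposition of Proposition~\ref{decompofK}, whose graded pieces $K_0^{\alpha^i}(C)$ become $K_0(C)$ under the equivalence of Lemma~\ref{ln=l} once each $\alpha^i$ is essentially trivial (automatic over $\bar k$ by Tsen, while rank and $c_1$ are étale-local invariants and so descend); a dévissage then shows rank and determinant form a complete invariant. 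Since the homomorphism above is additive and factors through these invariants, the isomorphism class of $\cL_V$ depends only on the rank and determinant of $V$.

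For part~(iii), write $s$ and $e$ for the common rank and degree of $V$ and $W$. The key geometric input is that $V$ and $W$ both lie in the moduli $\cM^\alpha_C(s,e)$, which over $\bar k$ is irreducible, hence connected; the determinantal line bundle varies in a flat family over this base, so its restriction to a fixed-determinant fibre $\cM^\alpha_C(r,\cL)$ defines a locally constant map from a connected base into $\Pic(\cM^\alpha_C(r,\cL)) \cong \ZZ$ by part~(i), and is therefore constant. Consequently $\cL_W \otimes \cL_V^{-1}$ restricts trivially to every fibre of the determinant morphism $\det \colon \cM^\alpha_C(r,d) \to \Pic^d(C)$. As these fibres are proper and geometrically connected, one has $\det_*\cO \cong \cO$, and a seesaw argument produces a line bundle $\cN$ on $\Pic^d(C)$ with $\cL_W \cong \cL_V \otimes \det^*\cN$. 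The final assertion is then immediate from part~(ii): if $\det V = \det W$, then $V$ and $W$ share the same rank and determinant, whence $\cL_W \cong \cL_V$ and $\cN$ is trivial.

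The main obstacle I expect lies in part~(i), namely passing from the geometric Picard group to the arithmetic one: one must control the Galois action on the canonical generator and the injection into $\Pic(\cM^\alpha_{C_{\bar k}}(r,\cL))^{\Gal(\bar k/k)}$ carefully, since $\cM^\alpha_C(r,\cL)$ may itself be a nontrivial Brauer–Severi variety with no $k$-rational point. A secondary difficulty is the seesaw descent in (iii), which hinges on the rank-$0$ difference class restricting trivially to each fixed-determinant fibre; I reduce this to the discreteness of $\Pic \cong \ZZ$ together with connectedness of the parameter space, but the bookkeeping of the $\alpha^r$-twists on determinants must be tracked carefully throughout.
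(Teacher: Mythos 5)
Your part~(i) follows the paper's proof: the Leray injection $\Pic(\cM^\alpha_C(r,\cL))\hookrightarrow\Pic(\cM^\alpha_C(r,\cL)_{\bar k})$, essential triviality of $\alpha_{\bar k}$ by Tsen, Lemma~\ref{ln=l}, and the Dr\'ezet--Narasimhan computation; your extra observation that the image is nonzero because it contains the class of $\cL_V$, defined over $k$, actually supplies a step the paper leaves implicit (an injection into $\ZZ$ alone only gives a subgroup of $\ZZ$). Your part~(iii) is a genuinely different route: rather than base-changing to $\bar k$ and citing \cite[Proposition~4.5(ii)]{Alper_Belmans_Bragg_Liang_Tajakka_22} as the paper does, you re-prove that statement in the twisted setting by a connectedness-plus-seesaw argument over $\det\colon\cM^\alpha_C(r,d)\to\Pic^d(C)$. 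This is workable given the irreducibility of the twisted moduli stack (available from the paper's good moduli space theorem), but it buys nothing over the paper's reduction and costs you the stack-level verifications ($\det_*\cO\cong\cO$, properness and geometric connectedness of fibres, behaviour over a base $\Pic^d(C)$ that may lack rational points) which you acknowledge but do not carry out.

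The genuine gap is in part~(ii). You route the assertion ``$\cL_V$ depends only on rank and determinant'' through the claim that $(\operatorname{rk},\det)$ is a complete invariant of $K_0^\alpha(C)$ \emph{over $k$}, justified by the character decomposition, Lemma~\ref{ln=l} over $\bar k$, and ``rank and $c_1$ descend.'' But this only shows that two classes with equal rank and determinant over $k$ have equal images in $K_0(C_{\bar k})$; to conclude equality in $K_0^\alpha(C)$ itself you would need injectivity of the base-change map $K_0^\alpha(C)\to K_0(C_{\bar k})$, which you never establish and which is far from clear: when $\alpha$ is optimal over $k$ there are no invertible $\alpha$-twisted sheaves, so the d\'evissage by line bundles underlying $K_0(C)\cong\ZZ\oplus\Pic(C)$ is unavailable, and twisted torsion sheaves at a closed point $x$ carry length constraints governed by $\ind(\alpha_{k(x)})$. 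Fortunately the proposition never needs the $K_0$-level claim: it suffices that $\cL_V$ and $\cL_W$ agree in $\Pic(\cM^\alpha_C(r,d))$, and this follows from the same Leray injection you already used in (i) --- applied now to $\Pic(\cM^\alpha_C(r,d))$, not merely to the fixed-determinant stack --- combined with the untwisted statement over $\bar k$. That repair is exactly the paper's proof of (ii) and (iii), so your argument is salvageable with tools you already invoked, but as written the d\'evissage step does not close.
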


\begin{proof}
    (i) By the Leray spectral sequence, we have an injection
    \[
    \Pic(\cM^\alpha_C(r,\cL)) \hookrightarrow \Pic(\cM^\alpha_C(r,\cL)_{\bar{k}}).
    \]
    Over $\bar{k}$ the gerbe $\alpha_{\bar{k}}$ is trivial, and by Lemma~\ref{ln=l} the stack $\cM^\alpha_C(r,\cL)_{\bar{k}}$ is isomorphic to the usual moduli stack of vector bundles with fixed determinant. The description of its Picard group as $\ZZ$, is proved in \cite{Drezet_Narasimhan_89}; the argument applies verbatim over any algebraically closed field.

    (ii) By Lemma~\ref{multdet}, the assignment $V \mapsto \cL_V$ is multiplicative in short exact sequences of $\alpha$-twisted vector bundles. Hence it descends to a group homomorphism
    \[
    K_0^\alpha(C)\;\longrightarrow\; \Pic(\cM^\alpha_C(r,d)).
    \]
    
    To see that $\cL_V$ depends only on $\operatorname{rk}(V)$ and $\det(V)$, recall from part~(i) that $\Pic(\cM^\alpha_C(r,d))\hookrightarrow \Pic\bigl(\cM^\alpha_C(r,d)_{\bar{k}}\bigr)$ is injective. Over $\bar{k}$ the gerbe becomes trivial and $\cM^\alpha_C(r,d)_{\bar{k}}$ identifies with the usual moduli space of vector bundles with fixed determinant. On this untwisted moduli space, the determinantal line bundle depends only on the rank and determinant of $V$ by \cite[Proposition 4.5 (i)]{Alper_Belmans_Bragg_Liang_Tajakka_22}. Hence the same dependence holds over $k$.
    
    (iii) The argument is the same as in part~(ii). Over $\bar{k}$ the gerbe is trivial, so $\cM^\alpha_C(r,d)_{\bar{k}}$ is the usual moduli space of vector bundles with fixed determinant. By \cite[Proposition~4.5(ii)]{Alper_Belmans_Bragg_Liang_Tajakka_22}, if $V$ and $W$ have the same rank and degree, then $\cL_W \;\cong\; \cL_V \otimes \det^*\cN$ for some line bundle $\cN$ on $\Pic^d(C)$; in particular, if $\det(V)=\det(W)$, then $\cL_W\cong \cL_V$. By the injection of Leray spectral sequence (as in part~(i)), the same isomorphism holds over $k$.
\end{proof}

\begin{remark}
    In \cite{Heinloth_17}, the fact that $\Pic(\cM_C(r,\cL))=\ZZ$ implies that every line bundle on $\cM_C(r,\cL)$ differs from the determinantal line bundle by a tensor power, and therefore the resulting notion of $\cL$–stability is independent of the choice of $\cL$ and agrees with the usual slope stability.
    
    By Proposition~\ref{detline}, the same statement holds for the twisted moduli stack: the group $\Pic(\cM_C^\alpha(r,\cL))$ is isomorphic to $\ZZ$.  Hence for \emph{any} line bundle $\cL'$ on $\cM_C^\alpha(r,\cL)$, the corresponding $\cL'$–stability notion coincides with slope stability, exactly as in the untwisted case.
\end{remark}

\begin{definition}
    Let $\cF$ be a locally free $\alpha$-twisted sheaf on $C$. Its \emph{slope} is defined by
    \[
    \mu(\cF) := \frac{\deg(\cF)}{\operatorname{rk}(\cF)}.
    \]
    We say that $\cF$ is \emph{stable} (resp.\ \emph{semistable}) if for every proper $\alpha$-twisted subbundle $0 \neq \cG \subsetneq \cF$ one has $\mu(\cG) < \mu(\cF) \text{ (resp.\ $\mu(\cG) \le \mu(\cF)$).}$
\end{definition}

\subsection{Good moduli spaces for twisted vector bundles}
In this subsection, we record that the moduli stack of semistable $\alpha$-twisted vector bundles admits a good moduli space in the sense of \cite{Alper_13}.  The existence of this moduli space was first established by Lieblich in \cite{Lieblich_07} using GIT.

When working with good moduli spaces, the arguments are essentially the same as in \cite{Alper_Halpern-Leinstner_Heinloth_13} and \cite{Alper_Belmans_Bragg_Liang_Tajakka_22}.  In particular, once the determinantal line bundle is fixed, the proofs carry over verbatim to the twisted setting.  For this reason we state the results without reproducing the proofs, and refer the reader to the above references for details.

\begin{theorem}
    Let $\alpha \in H^2(C,\mu_r)$ be the class of a $\mu_r$–gerbe $\cC \to C$, and fix a line bundle $\cL$ of degree $d$. Then:
    \begin{itemize}
        \item[(i)] The stacks $\cM^{ss,\alpha}_{C/k}(r,d)$ and $\cM^{ss,\alpha}_{C/k}(r,\cL)$ are smooth integral algebraic stacks, locally of finite presentation over $k$ with affine diagonal, $\Theta$-reductive, $S$-complete, and universally closed. Consequently, they admit projective good moduli spaces  (of dimension $r^2(g - 1)+1$ and $(r^2 -1)(g - 1)$ respectively) in the sense of~\cite{Alper_13}.
        \item[(ii)] The good moduli space $M^{ss,\alpha}_C(r,\cL)$ is geometrically unirational, and the morphism on stable loci $\cM^{s,\alpha}_{C_{\bar{k}}}(r,\cL_{\bar{k}})\rightarrow M^{s,\alpha}_{C_{\bar{k}}}(r,\cL_{\bar{k}})$ is a $\mu_r$–gerbe. Likewise, the stable locus of $\cM_C^{ss,\alpha}(r,d)$ is (geometrically) a $\GG_m$-gerbe over its good moduli space.
        \item[(iii)] If $\gcd\!\left(r,\; r d - \varphi(\alpha)\right)=1,$ then every semistable $\alpha$-twisted vector bundle of rank $r$ and degree $d$ is stable. Consequently, the open immersion $\cM^{s,\alpha}_C(r,\cL)\hookrightarrow\cM^{ss,\alpha}_C(r,\cL)$ is an isomorphism, and the same conclusion holds for $\cM^{ss,\alpha}_C(r,d)$.
    \end{itemize}
\end{theorem}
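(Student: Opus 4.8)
The plan is to reduce every assertion to the classical untwisted moduli theory after base change to $\bar k$, and then descend. The engine is the observation that a smooth projective curve over an algebraically closed field has trivial Brauer group by Tsen's theorem, so $\alpha_{\bar k} \in H^2(C_{\bar k},\mu_r)$ is automatically essentially trivial, \emph{whether or not} $\alpha$ itself was. Lemma~\ref{esstriv} then supplies an invertible $\alpha_{\bar k}$-twisted sheaf $\cL_\alpha$, and Lemma~\ref{ln=l} turns tensoring by $\cL_\alpha^{\vee}$ into an isomorphism of stacks $\cM^{ss,\alpha}_{C_{\bar k}}(r,\cL) \xrightarrow{\sim} \cM^{ss}_{C_{\bar k}}(r,\cN)$ onto an ordinary fixed-determinant moduli stack, and similarly in the varying-determinant case. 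I would record this comparison once and use it to import each geometric fact below.

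For (i), I would split the statement into \emph{geometric} properties and properties needing descent. Smoothness, integrality, local finite presentation, and the dimension count are geometric, so I would check them over $\bar k$: there the comparison above identifies the stack with the classical one of \cite{Alper_Belmans_Bragg_Liang_Tajakka_22}, and the dimension is computed intrinsically by deformation theory, the tangent--obstruction theory at $E$ being governed by $\bR\Hom(E,E)$, with Riemann--Roch giving $-\chi(\cEnd E)=r^2(g-1)$ in the varying-determinant case and $(r^2-1)(g-1)$ for the trace-free part in the fixed-determinant case. The good moduli space then has the stated dimension: one larger than the stack when the determinant varies, since the generic automorphism group is the scalar $\GG_m$, and equal to the stack dimension when the determinant is fixed, since the generic stabilizer $\mu_r$ is finite. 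The intrinsic properties --- affine diagonal, $\Theta$-reductivity, $S$-completeness, universal closedness, and the existence of a good moduli space --- I would obtain from the criteria of \cite{Alper_Halpern-Leinstner_Heinloth_13} and \cite{Alper_13}; because the test stacks $\Theta=[\bA^1/\GG_m]$ and $\ST$ base-change well and the relevant valuative lifting criteria are insensitive to the twist after the comparison above, the verifications carry over verbatim. Finally, projectivity follows from the determinantal line bundle: by Proposition~\ref{detline} it is defined over $k$ and generates $\Pic\cong\ZZ$, so it descends to an ample class on the good moduli space, while universal closedness of the stack yields properness of the space.

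Parts (ii) and (iii) are again comparison statements. Geometric unirationality is checked over $\bar k$, where the space is the classical moduli space and unirationality is known. The gerbe structure on the stable locus is intrinsic and hence descends: a stable twisted bundle is simple, so its automorphism group is the scalar $\GG_m$ in the varying-determinant case, and is cut down to $\mu_r$ (the scalars $\lambda$ with $\lambda^r=1$, the ones compatible with the fixed trivialization of the determinant) in the fixed-determinant case, giving the asserted $\GG_m$- and $\mu_r$-bandings. For (iii), the task is purely to translate the classical numerical criterion. Over $\bar k$, tensoring by the degree-$\tfrac1r\varphi(\alpha)$ twisted line bundle of Proposition~\ref{esstrivdeg} sends a rank-$r$ twisted bundle of degree $d$ to an untwisted bundle of honest degree $rd-\varphi(\alpha)$ and preserves slope-stability, as it shifts all slopes by the same constant. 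Thus $\gcd\bigl(r,\,rd-\varphi(\alpha)\bigr)=1$ is exactly the classical coprimality condition $\gcd(r,\deg)=1$: no proper subbundle can then share the slope of $E$, so every semistable bundle is stable and the open immersion $\cM^{s,\alpha}_C(r,\cL)\hookrightarrow\cM^{ss,\alpha}_C(r,\cL)$ is an equality, and likewise for $\cM^{ss,\alpha}_C(r,d)$.

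The main obstacle is not a calculation but the descent from $\bar k$ to $k$: the space, its ampleness, and the banding are all transparent over $\bar k$, yet over $k$ the twisted stack need \emph{not} carry a global $\cL_\alpha$ when $\alpha$ is optimal rather than essentially trivial, so I cannot simply transport the untwisted construction down. The resolution I would pursue separates verification from construction: the good moduli space is produced directly over $k$ by the existence theorem of \cite{Alper_13} applied to the $k$-stack, polarized by the $k$-rational determinantal line bundle of Proposition~\ref{detline}, while the base change to $\bar k$ is invoked only to check the geometric hypotheses --- $\Theta$-reductivity, $S$-completeness, boundedness --- of that theorem. Making this rigorous, and confirming that formation of the good moduli space commutes with the base change $\bar k/k$ so that these geometric checks suffice, is the delicate point.
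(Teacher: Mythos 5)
Your proposal is essentially viable but takes a genuinely different route from the paper, and the point you yourself flag as ``delicate'' is exactly where the two diverge. You reduce everything to the classical untwisted theory over $\bar k$ --- by Tsen's theorem $\alpha_{\bar k}$ is essentially trivial, so Lemmas~\ref{esstriv} and~\ref{ln=l} identify the twisted stack with an untwisted one --- and then hope to descend. The paper never makes this reduction for part (i): it works with the twisted stack directly over $k$, quoting \cite[Proposition~2.3.1.1]{Lieblich_07} for algebraicity, local finite presentation and affine diagonal (these are \emph{inputs} to, not outputs of, the good-moduli-space machinery --- a minor misattribution in your sketch), Langton's theorem in its twisted form \cite[Lemma~2.3.3.2]{Lieblich_07} for universal closedness, and then running the $\Theta$-reductivity and $S$-completeness arguments of \cite{Alper_Belmans_Bragg_Liang_Tajakka_22} and \cite{Weissmann_Zhang_25} verbatim in the twisted category. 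The single new input is that after a \emph{finite} extension of the base field or of the DVR there exists an $\alpha$-twisted locally free sheaf of the right rank and degree; since these valuative criteria are insensitive to finite extensions of the DVR, the untwisted diagram-filling goes through, and \cite[Theorem~5.4]{Alper_Halpern-Leinstner_Heinloth_13} then produces the good moduli space over $k$ itself. This dissolves your worry: no commutation of good-moduli-space formation with $\bar k/k$ is needed for \emph{existence}, because the hypotheses are checked over $k$; and in the one direction where comparison is used, it is automatic, since $\Spec\bar k\to\Spec k$ is flat and good moduli spaces are stable under flat base change. For (ii) and (iii) the paper simply cites \cite[Corollary~3.2.2.4]{Lieblich_08} plus the observations that the gcd argument is field-independent and the assertions of (ii) are geometric; your from-scratch versions (simplicity of stable twisted bundles giving the $\GG_m$- and $\mu_r$-bandings, untwisting by the degree-$\tfrac1r\varphi(\alpha)$ twisted line bundle of Proposition~\ref{esstrivdeg} translating the coprimality condition) are correct and match the intended arguments.

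Two loose ends in your version should be addressed. First, your $\bar k$-route \emph{can} be closed, but by a different tool than the one you name: what is needed is the reduction (available in \cite{Alper_Halpern-Leinstner_Heinloth_13}) of $\Theta$-reductivity and $S$-completeness to complete DVRs with algebraically closed residue field; in equal characteristic such a DVR is $\kappa[[t]]$ with $\kappa\supseteq\bar k$, hence a $\bar k$-algebra, so maps from it factor through the base change and your checks over $\bar k$ suffice. This must be invoked explicitly; it is the step your sketch leaves open. Second, your projectivity argument --- ``the determinantal line bundle generates $\Pic\cong\ZZ$, so it descends to an ample class'' --- is a non sequitur as written: descent to the good moduli space requires triviality of the stabilizer action on fibers at closed points (the paper proves this in a separate, later proposition), and even then the descended bundle is a priori only semi-ample, so ampleness needs an additional positivity argument rather than the Picard-group computation.
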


\begin{proof}
    For part (i), algebraicity, local finite presentation, and affine diagonal follow from \cite[Proposition~2.3.1.1]{Lieblich_07}. Universal closedness follows from Langton’s semistable reduction theorem \cite{Langton_75}, as extended to the twisted setting in \cite[Lemma~2.3.3.2]{Lieblich_07}. $\Theta$-reductivity and $S$-completeness follow by the same arguments as in \cite[Proposition~3.8]{Alper_Belmans_Bragg_Liang_Tajakka_22} and \cite[Lemma~3.13]{Weissmann_Zhang_25}. The only additional input in the twisted setting is that, after possibly replacing the base field (or the DVR) by a finite extension, there exists an $\alpha_{\kappa}$-twisted locally free sheaf of rank \(r\) and degree \(d\). Once such a bundle exists, the diagrams appearing in the definitions of $\Theta$-reductivity and $S$-completeness can be filled exactly as in the untwisted case using the same techniques. The existence of projective good moduli spaces then follows from \cite[Theorem~5.4]{Alper_Halpern-Leinstner_Heinloth_13}.

    The remaining assertions follow from \cite[Corollary~3.2.2.4]{Lieblich_08}, together with the observation that neither the argument for (i) nor the gcd criterion in (iii) requires the base field to be algebraically closed, while the statements in (ii) are geometric and therefore become true after base change to~$\bar{k}$.
\end{proof}

\begin{remark}
    We remark that the structural results for moduli of twisted vector bundles parallel those for the usual moduli of vector bundles, and the proofs carry over with only minor modifications.  For further details, we refer the reader to related works such as \cite{Weissmann_Zhang_25} and \cite{Herrero_Weissmann_Zhang_25}.
\end{remark}

\begin{proposition}
    Assume there exists an $\alpha$-twisted vector bundle $V$ on $C$. Then the determinantal line bundle $\cL_V$ on the moduli stack $\cM^{ss,\alpha}_C(n,\cL)$ descends to its good moduli space $M^{ss,\alpha}_C(n,\cL)$ and the descended line bundle is semi-ample.
\end{proposition}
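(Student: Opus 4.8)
The plan is to separate the statement into its two assertions and prove them in order: first that $\cL_V$ descends along the good moduli space morphism $\pi\colon \cM^{ss,\alpha}_C(n,\cL)\to M^{ss,\alpha}_C(n,\cL)$, and then that the descended bundle is semi-ample. Throughout I keep the standing hypothesis from the construction of $\cL_V$, namely that the $\alpha$-twisted bundle $W$ (with $V=q^*W$) is chosen so that $\chi(\cHom(W,E))=0$ for every semistable $E$ of rank $n$ and determinant $\cL$. Since all such $E$ share one common slope $\mu_0=\mu(E)$, and $\cHom(W,E)=W^\vee\otimes E$ is an honest sheaf of rank $\rank(W)\cdot n$, Riemann--Roch shows this condition is equivalent to the single numerical constraint $\mu(W)=\mu_0+1-g$.

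For the descent I would apply the descent criterion for good moduli spaces \cite{Alper_13, Alper_Belmans_Bragg_Liang_Tajakka_22}: a line bundle on $\cM^{ss,\alpha}_C(n,\cL)$ descends to $M^{ss,\alpha}_C(n,\cL)$ if and only if the stabilizer of every closed point acts trivially on the fibre. The closed points are the polystable twisted bundles $E\cong\bigoplus_i E_i\otimes W_i$ with the $E_i$ pairwise non-isomorphic stable $\alpha$-twisted bundles of slope $\mu_0$ and $\Aut(E)=\prod_i\GL(W_i)$. Because $W^\vee\otimes E$ is untwisted, its cohomology splits as $H^j(W^\vee\otimes E)=\bigoplus_i H^j(W^\vee\otimes E_i)\otimes W_i$, so on the fibre $\cL_V|_{[E]}\cong\det H^0(W^\vee\otimes E)^\vee\otimes\det H^1(W^\vee\otimes E)$ an element $g=(g_i)$ acts through the character $\prod_i\det(g_i)^{-\chi(W^\vee\otimes E_i)}$. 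Triviality of this character is exactly the vanishing of $\chi(W^\vee\otimes E_i)$ for every Jordan--Hölder constituent. By Riemann--Roch $\chi(W^\vee\otimes E_i)=\rank(W)\rank(E_i)\bigl(\mu(E_i)-\mu(W)+1-g\bigr)$, and since $\mu(E_i)=\mu_0$ for every $i$, the slope constraint on $W$ forces each of these to vanish. Hence the stabilizer acts trivially, $\cL_V$ descends, and I write $L$ for the resulting line bundle on $M^{ss,\alpha}_C(n,\cL)$.

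For semi-ampleness I would first observe that the property is insensitive to the base field and then reduce to the classical untwisted case. Semi-ampleness of $L$ means $L^{\otimes m}$ is globally generated for some $m>0$, i.e. the evaluation map $H^0(M,L^{\otimes m})\otimes\cO_M\to L^{\otimes m}$ is surjective. Since $M$ is projective, flat base change gives $H^0(M,L^{\otimes m})\otimes_k\bar k\cong H^0(M_{\bar k},L^{\otimes m}_{\bar k})$, and surjectivity of a morphism of coherent sheaves may be checked after the faithfully flat base change $M_{\bar k}\to M$; thus $L^{\otimes m}$ is globally generated over $k$ if and only if $L^{\otimes m}_{\bar k}$ is, for the \emph{same} $m$. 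It therefore suffices to treat $L_{\bar k}$. Over $\bar k$ the gerbe $\alpha_{\bar k}$ is trivial, so by Lemma~\ref{ln=l} the stack $\cM^{ss,\alpha}_{C_{\bar k}}(n,\cL_{\bar k})$ is identified with the usual moduli stack of semistable bundles of fixed determinant, and under this identification $\cL_V$ is the classical determinantal (theta) line bundle. Its descent to the good moduli space is semi-ample by the standard theory of the theta line bundle \cite{Drezet_Narasimhan_89, Alper_Belmans_Bragg_Liang_Tajakka_22}; alternatively, the good moduli space here is Lieblich's GIT quotient \cite{Lieblich_07}, whose canonical ample polarization is a positive power of the descent of $\cL_V$. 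Either way $L_{\bar k}$ is semi-ample, and by the base-change argument so is $L$.

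I expect the descent to be the routine part: once the fibre of $\cL_V$ at a polystable point is written out, the vanishing of the stabilizer character is immediate from the equality of the slopes of the Jordan--Hölder constituents together with the chosen slope of $W$. The substantive input is the semi-ampleness over $\bar k$, which is the classical (and genuinely deep) statement that the theta line bundle becomes base-point-free after passing to a suitable power; I would cite this rather than reprove it, precisely because over $\bar k$ the problem reduces verbatim to the untwisted one. The only twisted-specific point requiring care is that the decomposition of $W^\vee\otimes E$ and the slope bookkeeping are unaffected by $\alpha$ — which holds because $W^\vee\otimes E$ is an honest sheaf on $C$ — so no difficulty arises beyond the classical theory.
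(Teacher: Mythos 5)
Your proof is correct and takes essentially the same route as the paper's: descent via Alper's criterion by showing the stabilizer of each polystable closed point acts trivially on the fibre of $\cL_V$ (computed through the character $\prod_i \det(g_i)^{\pm\chi}$ on the determinant of cohomology), followed by reduction of semi-ampleness to the classical untwisted determinantal line bundle over $\bar{k}$. Your Riemann--Roch computation showing $\chi(W^\vee\otimes E_i)=0$ for each Jordan--H\"older constituent, and your flat base-change argument that global generation of $L^{\otimes m}$ descends from $\bar{k}$ for the same $m$, are details the paper leaves implicit (it asserts the constituent-wise vanishing ``by assumption'' and calls semi-ampleness ``a geometric property''), so your write-up is if anything slightly more careful on these two points.
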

\begin{proof}
    By the same argument as in \cite[Lemma~3.11(ii)]{Alper_Belmans_Bragg_Liang_Tajakka_22}, applied in the abelian category of $\alpha$-twisted coherent sheaves on $C$ (see \cite{Lieblich_07} for the existence of Harder--Narasimhan and Jordan--Hölder filtrations in this setting), closed points of $\cM^{ss,\alpha}_C(r,d)$ correspond to polystable $\alpha$-twisted bundles.  Thus any closed point can be represented by $E \;=\; \bigoplus_{j=1}^n E_j^{\oplus m_j},$ where the $E_j$ are pairwise non-isomorphic stable $\alpha$-twisted bundles of rank $r_j$ and degree $d_j$ with slope $d_j/r_j = d/r$.

    We first show that the stabilizer of $[E]$ acts trivially on the fiber
    of $\cL_V$ at $[E]$.  The automorphism group of $E$ is isomorphic to
    \[
        \Aut(E) \;\cong\; \GL_{m_1}(\cA_{1})\times \cdots\times \GL_{m_n}(\cA_{n}),
    \]
    where each $\cA_j$ is a finite-dimensional Azumaya algebra.
    The fiber of $\cL_V$ at $[E]$ is, by definition, the determinant of
    cohomology
    \[
        \cL_V|_{[E]} \;\cong\; \det R\Gamma\bigl(C, E \otimes V\bigr)
        \;\cong\; \bigotimes_{i=0}^1 \bigl(\det H^i(C,E\otimes V)\bigr)^{(-1)^i}.
    \]
    Since $E = \bigoplus_j E_j^{\oplus m_j}$, we have
    \[
        R\Gamma(C,E\otimes V)
        \;\cong\; \bigoplus_{j=1}^n R\Gamma(C,E_j\otimes V)^{\oplus m_j},
    \]
    and an element $(g_1,\ldots,g_n) \in \Aut(E)$ acts diagonally on this
    direct sum.  Thus its action on the fiber $\cL_V|_{[E]}$ is given by
    multiplication by
    \[
        \prod_{j=1}^n \det(g_j)^{\sum_i (-1)^i \dim H^i(C,E_j\otimes V)}
        \;=\; \prod_{j=1}^n \det(g_j)^{\chi(C,E_j\otimes V)}.
    \]
    By construction, each $E_j$ has slope $d/r$, and by assumption we have
    $\chi(C,E_j\otimes V) = 0$ for all $j$, hence the above product is
    equal to $1$, and the stabilizer $\Aut(E)$ acts trivially on the
    fiber $\cL_V|_{[E]}$.

    We have thus shown that the line bundle $\cL_V$ on
    $\cM^{ss,\alpha}_C(r,\cL)$ has trivial stabilizer action at all
    closed points.  By Alper's descent theorem for vector bundles on
    good moduli spaces \cite[Theorem~10.3]{Alper_13}, vector bundles on
    $M^{ss,\alpha}_C(r,\cL)$ are equivalent to vector bundles on
    $\cM^{ss,\alpha}_C(r,\cL)$ with trivial stabilizer action at closed
    points.  Applied to the rank~$1$ vector bundle $\cL_V$, this yields a
    line bundle $\Lambda$ on $M^{ss,\alpha}_C(r,\cL)$ such that
    $\cL_V \cong \pi^{*}\Lambda$, where
    $\pi:\cM^{ss,\alpha}_C(r,\cL)\to M^{ss,\alpha}_C(r,\cL)$ is the good
    moduli space morphism.

    Finally, over an algebraic closure $\bar{k}$, the moduli stack
    $\cM^{ss,\alpha}_C(r,\cL)_{\bar{k}}$ identifies with an untwisted
    moduli stack of semistable vector bundles with fixed determinant, and
    the descended determinantal line bundle on 
    $M^{ss}_C(r,\cL\otimes\cN)_{\bar{k}}$ is semi-ample by
    \cite[Proposition~5.2]{Alper_Belmans_Bragg_Liang_Tajakka_22}.
    Semi-ampleness is a geometric property, so $\Lambda$ is semi-ample on
    $M^{ss,\alpha}_C(r,\cL)$.
\end{proof}

Let $f\colon C\to k$ be a smooth projective geometrically connected curve admitting a rational point $p\in C(k)$. The Leray spectral sequence for $f$ with coefficients in $\GG_m$ (similarly for $\mu_n$) yields the extended five-term exact sequence
\[
    0\to \Pic(C)\to \Pic_{C/k}(k)\to \Br(k)\to \ker\!\left(\Br(C)\to H^0(k,R^2 f_*\GG_m)\right)\xrightarrow{p_1}H^1(k,\Pic_{C/k})\to H^3(k,\GG_m).
\]

Since $p\in C(k)$ gives a section of $f$, the map $\Br(C)\to\Br(k)$ splits, and we obtain canonical decompositions
\[
    H^2(C,\GG_m)=H^2(k,\GG_m)\oplus H^1(k,\Pic_{C/k}),
\]
\[
    H^2(C,\mu_n)=H^2(k,\mu_n)\oplus H^1(k,\Pic_{C/k}[n])\oplus \ZZ/n\ZZ.
\]
Here the summand $\ZZ/n\ZZ=H^0(k,R^2 f_*\mu_n)$ is generated by the class of the $n$th-root gerbe $[\cO(p)]^{1/n}$, and measures the degree of the essentially trivial $\mu_n$-gerbes.

For $\alpha\in H^2(C,\mu_n)$, let $\beta:=p_1(\alpha)\in H^1(k,\Pic_{C/k}[n])$. Since tensoring with an $n$-torsion line bundle induces an inclusion
\[
    \Pic_{C/k}[n]\hookrightarrow \Aut\!\left(M^{ss}_{C_{\bar{k}}/\bar{k}}(n,\cO(p))\right),
\]
the class $\beta$ determines an element of $H^1\!\left(k,\Aut(M^{ss}_{C/k}(n,\cO(p)))\right)$ and therefore a twisted form of the moduli space $M^{ss}_{C/k}(n,\cO(p))$.

\begin{remark}
    Since \(\cM^{s}_{C/k}(n,\cL)\to M^{s}_{C/k}(n,\cL) \) is a $\mu_n$-gerbe, the obstruction for a point of the coarse moduli space to lift to the stack lies in the Brauer group $\Br(k)$. In particular, the Brauer obstruction to the existence of a tautological bundle on $M^{s}_{C/k}(n,\cL)$ is always the pullback of a class from $\Br(k)$. Consequently, a universal (or tautological) family need not exist on $M^{s}_{C/k}(n,\cL)$.
\end{remark}

We summarize this correspondence below.

\begin{proposition}\label{Gtwist}\textup{\cite[Proposition~3.2.2.6]{Lieblich_08}}
    Let $\alpha\in H^2(C,\mu_n)$, and write $\alpha_{\bar{k}} = a \in \ZZ/n\ZZ = H^0(k,R^2 f_* \mu_n)$ for its geometric component. Fix a line bundle $\cL\in \Pic(C)$, and let $\beta := p_1(\alpha)\in H^1(k,\Pic_{C/k}[n])$. Then the moduli space $M^{ss,\alpha}_{C/k}(n,\cL)$ is the Galois twist of the untwisted moduli space $M^{ss}_{C/k}(n,\cL(-ap))$ associated to the class $\beta$ in $H^1(k,\Aut(M^{ss}_{C/k}(n,\cL(-ap))))$.
\end{proposition}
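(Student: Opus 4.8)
The plan is to prove this by Galois descent: trivialize the gerbe over $\bar{k}$ to identify the twisted moduli space with an untwisted one, and then show that the resulting descent cocycle is exactly $\beta$.

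First I would trivialize $\alpha$ over the algebraic closure. Since $\alpha_{\bar{k}}=a\in\ZZ/n\ZZ$ is generated by the class of the $n$th root gerbe $[\cO(p)]^{1/n}$, the base change $\alpha_{\bar{k}}$ is essentially trivial and is the $n$th root gerbe of $\cO(ap)_{\bar{k}}$. By Proposition~\ref{Kummer} there is an invertible $\alpha_{\bar{k}}$-twisted line bundle $\cL_\alpha$ on $C_{\bar{k}}$ with $\cL_\alpha^{\otimes n}\cong\cO(ap)_{\bar{k}}$, and by Lemma~\ref{ln=l} tensoring with $\cL_\alpha$ is an equivalence of categories. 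Since $\det(\cF\otimes\cL_\alpha)\cong\det(\cF)\otimes\cO(ap)$ for a rank-$n$ bundle $\cF$, and slopes shift by the constant $\deg\cL_\alpha$ so that (semi)stability is preserved, this carries fixed-determinant bundles of determinant $\cL_{\bar{k}}(-ap)$ to $\alpha$-twisted bundles of determinant $\cL_{\bar{k}}$. This produces an isomorphism $\Psi\colon M^{ss}_{C_{\bar{k}}}(n,\cL_{\bar{k}}(-ap))\xrightarrow{\sim}M^{ss,\alpha}_{C_{\bar{k}}}(n,\cL_{\bar{k}})$, which already explains the twist $\cL(-ap)$ appearing in the statement.

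Next I would set up the descent. Both spaces are defined over $k$ — the untwisted one because $\cL(-ap)\in\Pic(C)$, and the twisted one because $\alpha\in H^2(C,\mu_n)$ is a class over $k$ — so each carries a canonical semilinear $\Gal(\bar{k}/k)$-action, and $\Psi$ exhibits $M^{ss,\alpha}_{C/k}(n,\cL)$ as a $k$-form of $M^{ss}_{C/k}(n,\cL(-ap))$. The comparison cocycle is $\sigma\mapsto\Psi^{-1}\circ\sigma(\Psi)$. Because the Galois action on the twisted side is induced by $\sigma^{*}$ on twisted sheaves (using the canonical identification $\sigma^*\alpha_{\bar{k}}\cong\alpha_{\bar{k}}$ coming from $\alpha$ being defined over $k$), and $\Psi$ is tensoring by $\cL_\alpha$, the discrepancy between $\sigma^{*}(\cF\otimes\cL_\alpha)\cong\sigma^*\cF\otimes\sigma^*\cL_\alpha$ and $\sigma^*\cF\otimes\cL_\alpha$ is the honest $n$-torsion line bundle $c_\sigma:=\cL_\alpha^{-1}\otimes\sigma^{*}\cL_\alpha\in\Pic_{C/k}[n](\bar{k})$, acting on the untwisted moduli space through the tensoring automorphism $\Pic_{C/k}[n]\hookrightarrow\Aut$. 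A direct check gives $c_{\sigma\tau}=c_\sigma\cdot\sigma(c_\tau)$, so $\{c_\sigma\}$ is a $1$-cocycle and defines a class in $H^1(k,\Pic_{C/k}[n])$. It then remains to identify this class with $\beta=p_1(\alpha)$ by comparing the construction of $c_\sigma$ with the Leray edge homomorphism $H^2(C,\mu_n)\to H^1(k,R^1f_*\mu_n)=H^1(k,\Pic_{C/k}[n])$: the point is that $p_1$ is precisely the obstruction to $\bar{k}$-descent of an invertible twisted sheaf trivializing $\alpha_{\bar{k}}$, and unwinding the \v{C}ech description of this edge map and matching it with $\sigma\mapsto\cL_\alpha^{-1}\otimes\sigma^*\cL_\alpha$ gives $[\{c_\sigma\}]=\beta$. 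By functoriality of twisting under the $k$-homomorphism $\Pic_{C/k}[n]\hookrightarrow\Aut(M^{ss}_{C/k}(n,\cL(-ap)))$, the form $M^{ss,\alpha}_{C/k}(n,\cL)$ is the $\beta$-twist, as claimed.

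The genuinely delicate point, and the one I expect to be the main obstacle, is this last identification $[\{c_\sigma\}]=\beta$. It requires carefully matching the semilinear Galois action on twisted sheaves with the internal edge homomorphism of the Leray spectral sequence, and keeping the sign and inverse conventions for the $\Gal(\bar{k}/k)$-action consistent between the two descriptions — the part most likely to demand genuine bookkeeping. One must also verify that the $H^2(k,\mu_n)$-component of $\alpha$ does not enter: it affects only the residual gerbe structure on $\cM^{s,\alpha}_C$ over its good moduli space (equivalently, the existence of a tautological family), and not the isomorphism type of the coarse space as a $k$-variety, which is therefore governed by $\beta$ alone.
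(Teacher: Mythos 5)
Your proposal is correct and follows essentially the same route as the paper, which states this result with a citation to Lieblich (Proposition~3.2.2.6) and whose surrounding discussion is exactly your skeleton: the split Leray sequence via the rational point $p$ (accounting for the $\cL(-ap)$ shift and the irrelevance of the $H^2(k,\mu_n)$-component to the coarse space), an invertible twisted sheaf $\cL_\alpha$ with $\cL_\alpha^{\otimes n}\cong\cO(ap)$ trivializing $\alpha_{\bar k}$, and the action of $\Pic_{C/k}[n]$ by tensoring realizing $\beta=p_1(\alpha)$ as the descent cocycle $\sigma\mapsto\cL_\alpha^{-1}\otimes\sigma^{*}\cL_\alpha$. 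The delicate matching of this cocycle with the Leray edge map that you flag is precisely the content the paper delegates to Lieblich's proof.
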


\begin{remark}
    If $\beta$ is trivial, then the moduli space $M^{ss,\alpha}_{C/k}(n,\cO_C)$ is untwisted and isomorphic to $M^{ss}_{C/k}(n,\cO_C(-ap))$, where $a \in H^0(k,R^2 f_* \mu_n) \cong \ZZ/n\ZZ$ is the image of $\alpha$.
    
    Notice that in the case where $C$ does not admit a rational point, we no longer have a splitting of $H^2(C,\mu_n)$ as before, but the map $p_1$ still exists.  In particular, we can still identify $M^{ss,\alpha}_{C/k}(n,\cL)$ as a Galois twist of $M^{ss}_{C/k}(n,\cN)$ for some line bundle $\cN$ on $C$, not necessarily of the form $\cO_C(-ap)$.
\end{remark}

\begin{remark}\label{twistratpt}
    The above discussion generalizes to Picard schemes. Let $\alpha \in H^2(C,\mu_n)$, with $n$ invertible in $\operatorname{char}(k)$, and let $\alpha_{\bar{k}} = a \in \mathbb{Z}/n\mathbb{Z}$ denote its geometric invariant. Then one can show that the degree $i$ $\alpha$–twisted Picard scheme $\Pic^{\,i}_{C/k,\alpha}$ is obtained as the Galois twist of $\Pic^{\,i-a}_{C/k}$ by the cocycle in $H^1\!\left(k,\Pic_{C/k}[n]\right)$ corresponding to~$\alpha$.  (Equivalently, it is the torsor obtained by translating $\Pic_{C/k}$ by the $n$–torsion class determined by~$\alpha$.)

    Consequently, whenever the image of $\alpha$ in $H^1(k,\Pic_{C/k}[n])$ is nontrivial, the scheme $\Pic^{\,i}_{C/k,\alpha}$ is a nontrivial $\Pic^0_{C/k}$–torsor.  In particular, it has no $k$–rational point.

    Conversely, if $\alpha$ is essentially trivial, then $\Pic^{\,i}_{C/k,\alpha}$ is the trivial $\Pic^0_{C/k}$–torsor.  Hence it is (noncanonically) isomorphic to $\Pic^0_{C/k}$, and in particular contains a $k$–rational point corresponding to an $\alpha$–twisted line bundle on~$C$.

    We refer the reader to \cite{Huybrecht_Mattei_25} for further discussion of this construction.
\end{remark}

\subsection{Bundles on gerbes} In this subsection, we study line bundles and higher-rank vector bundles on $\mu_n$-gerbes, illustrating the general picture through two explicit examples.

\begin{theorem}\label{classifyPic}
    Let $\alpha \in H^2(C,\mu_n)$ and let $\beta$ denote its optimal part, so that 
    $|\beta| = m$.  Then
    \[
        \Pic(\sC)\;=\;\coprod_{i\in \ZZ/(\frac{n}{m})\ZZ} \Pic_{\alpha^{\,im}}(C)\;\cong\;\coprod_{i\in \ZZ/(\frac{n}{m})\ZZ}\Pic(C).
    \]
    In particular, $\Pic(\sC)\cong \Pic_{\sC/k}(k)$ if and only if $\alpha \ \text{is essentially trivial}.$
\end{theorem}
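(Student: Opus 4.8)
The plan is to reduce everything to the character decomposition of Theorem~\ref{decomp} and then read off which characters actually carry genuine line bundles. First I would apply that equivalence to invertible objects. Under $\Coh(\sC)\xrightarrow{\sim}\prod_{i\in\ZZ/n\ZZ}\Coh(C,\alpha^i)$ the inertial $\mu_n$-action on an invertible $\cO_{\sC}$-module acts through a single character, so every line bundle on $\sC$ is concentrated in one component, where it is an $\alpha^i$-twisted line bundle; restricting to rank-one objects therefore yields
\[
\Pic(\sC)\;=\;\coprod_{i\in\ZZ/n\ZZ}\Pic_{\alpha^i}(C).
\]

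Next I would invoke the nonemptiness criterion for twisted Picard sets: $\Pic_{\alpha^i}(C)\neq\varnothing$ if and only if $\alpha^i$ is essentially trivial. Writing $\beta\in\Br(C)$ for the image of $\alpha$ under $H^2(C,\mu_n)\to H^2(C,\GG_m)$, namely its optimal part with $|\beta|=m$, the power $\alpha^i$ is essentially trivial exactly when $\beta^i=0$, i.e.\ precisely when $m\mid i$. Hence only the indices $i=jm$ with $j\in\ZZ/(n/m)\ZZ$ survive, giving the first equality. For each such $j$, choosing an invertible $\alpha^{jm}$-twisted sheaf (which exists by Lemma~\ref{esstriv}) and tensoring induces a bijection $\Pic_{\alpha^{jm}}(C)\xrightarrow{\sim}\Pic(C)$ as in Lemma~\ref{ln=l}, and these assemble into the second isomorphism.

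For the final assertion I would compare $\Pic(\sC)$ with the Picard scheme through the natural injection $\Pic(\sC)\hookrightarrow\Pic_{\sC/k}(k)$ coming from the low-degree terms of the Leray sequence for $\sC\to\Spec k$. Since the character group of $\mu_n$ is the constant group $\ZZ/n\ZZ$, the Galois action fixes each character, so the decomposition above sheafifies to a decomposition of schemes $\Pic_{\sC/k}=\coprod_{i\in\ZZ/n\ZZ}\Pic_{C/k,\alpha^i}$ into the twisted Picard schemes of Remark~\ref{twistratpt}. Over $\bar k$ the Brauer group of the curve vanishes, so every $\alpha^i_{\bar k}$ is essentially trivial and $\Pic_{\sC/k}$ has all $n$ geometric components, whereas $\Pic(\sC)$ records only the $n/m$ essentially trivial ones. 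I would then argue that when $\alpha$ is essentially trivial ($m=1$) the invertible twisted sheaves $\cL_{\alpha}^{\otimes i}$ realize every character component by an actual line bundle on $\sC$, forcing the injection to be onto, while when $m>1$ the non-essentially-trivial components supply $k$-points of $\Pic_{\sC/k}$ that do not lift to genuine line bundles, so the injection is strict.

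The main obstacle is precisely this last comparison, that is, controlling the boundary map $\Pic_{\sC/k}(k)\to\Br(k)$ whose kernel is $\Pic(\sC)$. On the essentially trivial components one must show this $\Br(k)$-descent obstruction vanishes uniformly, the content being that an invertible $\alpha$-twisted sheaf neutralizes the gerbe and trivializes the obstruction; on the remaining components one must exhibit an unobstructed $k$-point of $\Pic_{C/k,\alpha^i}$ for some $i$ with $m\nmid i$. Separating the torsor part of the obstruction, living in $H^1(k,\Pic_{C/k})$, from its $\Br(k)$ part, and verifying that essential triviality is exactly the condition making both vanish on every contributing character, is the delicate step; the remainder of the argument is formal.
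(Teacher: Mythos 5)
Your first two steps are correct and coincide with the paper's own argument: the paper likewise obtains $\Pic(\sC)=\coprod_{i}\Pic_{\alpha^{i}}(C)$ from the character decomposition, discards the characters $i$ with $m\nmid i$ using the criterion that $\Pic_{\alpha^{i}}(C)\neq\varnothing$ iff $\alpha^{i}$ is essentially trivial (Lemma~\ref{esstriv}), and trivializes each surviving component by tensoring with a chosen invertible $\alpha^{im}$-twisted sheaf.

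The gap is in the final assertion, which you explicitly leave open (your ``delicate step''), and the route you sketch for it would not close as stated. For the ``only if'' direction you propose, when $m>1$, to exhibit $k$-points of $\Pic_{\sC/k}$ on the non-essentially-trivial components that fail to lift to genuine line bundles. But by the paper's Remark~\ref{twistratpt}, when the optimal part of $\alpha$ is carried by a nontrivial class in $H^{1}(k,\Pic_{C/k}[n])$, the component $\Pic_{C/k,\alpha^{i}}$ with $m\nmid i$ is a \emph{nontrivial} torsor under $\Pic^{0}_{C/k}$ and hence has no $k$-points at all; it cannot ``supply'' unliftable points. Moreover, whenever $\Br(k)=0$ (for instance $k$ a $C_{1}$-field, exactly the setting exploited in Section~5 of the paper), the boundary map $\Pic_{\sC/k}(k)\to\Br(k)$ that you want to control is identically zero, so no unliftable $k$-point exists on \emph{any} component and your strategy for strictness cannot succeed there. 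A symmetric caution applies to your ``if'' direction: translating by $\cL_{\alpha}^{\otimes i}$ reduces each component to the untwisted one, but surjectivity of $\Pic(C)\to\Pic_{C/k}(k)$ itself requires the obstruction in $\Br(k)$ to vanish (e.g.\ via a rational point on $C$, which is the paper's standing convention when the Leray splitting is invoked), a point your sketch passes over.

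The paper sidesteps the entire boundary-map analysis: it deduces the last claim in one line from Remark~\ref{twistratpt}, reading ``$\Pic(\sC)\cong\Pic_{\sC/k}(k)$'' as the component-wise statement that every twisted Picard scheme $\Pic^{i}_{C/k,\alpha^{j}}$ admits a $k$-point precisely when $\alpha$ is essentially trivial, rather than separating torsor and Brauer obstructions on each component as you attempt. If you replace your descent analysis by that component-wise comparison of $k$-points against actual twisted line bundles, the remaining work is exactly the citation the paper gives, and the difficulty you flagged dissolves.
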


\begin{proof}
    Since $|\beta|=m$, the class $\alpha^{m}$ maps to zero in $\Br(C)$, and is therefore essentially trivial.  Consequently, $\Pic_{\alpha^{k}}(C)$ is nonempty if and only if $m \mid k$, which yields
    \[
        \Pic(\sC)=\coprod_{i\in \ZZ/(n/m)\ZZ} \Pic_{\alpha^{\,im}}(C).
    \]

    For each $i$, essential triviality of $\alpha^{m}$ implies the existence of an $\alpha^{\,im}$-twisted line bundle of some degree. Tensoring with the inverse of such a bundle provides a noncanonical isomorphism $\Pic_{\alpha^{\,im}}(C)\;\xrightarrow{\;\sim\;}\Pic(C),$ giving the second isomorphism in the statement.

    Finally, by Remark~\ref{twistratpt}, the twisted Picard scheme $\Pic_{C/k,\alpha^j}^{i}$ admits a $k$-point for every $i$ and $j$ if and only if $\alpha$ is essentially trivial. This establishes the last claim.
\end{proof}

Next, we recall that Nironi \cite{Nironi_09} constructed moduli spaces of semistable sheaves on Deligne--Mumford stacks by adapting the GIT construction via Quot schemes. His theory applies in particular to decorated sheaves, including twisted sheaves and parabolic bundles. For our purposes, we record the following result.

\begin{theorem}\label{nironi}\textup{\cite[Proposition~7.3]{Nironi_09}}
    Let $\pi\colon \sX\to X$ be a $G$-banded gerbe over $X$, where $k$ is an algebraically closed field, $X$ is a smooth projective variety and $G$ is either $\GG_m$ or $\mu_n$. Let $\chi=(\chi_1,\dots,\chi_n)$ be characters of $G$, and let $\cP_\chi=(P_{\chi_1},\dots,P_{\chi_n})$ denote the set of $n$-tuples of Hilbert polynomials labelled by~$\chi$ with $\sum_{i=1}^n P_{\chi_i}\;=\;P .$ Then the moduli stack and the good moduli space of torsion-free semistable sheaves on $\sX$ decompose as
    \[
        \cM^{ss}(P)\;\cong\;\coprod_{P_\chi\in \cP}\ \coprod_{i=1}^n \cM^{ss}_{\chi}(P_{\chi_i}),\qquad M^{ss}(P)\;\cong\;\coprod_{P_\chi\in \cP}\ \coprod_{i=1}^n M^{ss}_{\chi}(P_{\chi_i}).
    \]
\end{theorem}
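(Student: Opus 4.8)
The plan is to reduce the statement to the orthogonal character decomposition of $\Coh(\sX)$ and then to verify that Nironi's Gieseker semistability, computed through a generating sheaf, is compatible with that decomposition. First I would fix a $\pi$-generating locally free sheaf $\cE$ on $\sX$ and record its isotypic splitting $\cE\cong\bigoplus_i\cE_i$ with $\cE_i\in\Coh(X,\alpha^i)$ (for $G=\mu_n$ the index runs over $\ZZ/n\ZZ$ by Theorem~\ref{decomp}, and for $G=\GG_m$ over the finitely many characters occurring, by its $\GG_m$-analogue). By the same decomposition every $\cF\in\Coh(\sX)$ splits functorially as $\cF\cong\bigoplus_i\cF_i$ with $\cF_i\in\Coh(X,\alpha^i)$, and there are no nonzero morphisms between factors of distinct character. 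Hence every subsheaf $\cG\subseteq\cF$ is automatically graded, $\cG\cong\bigoplus_i(\cG\cap\cF_i)$, which is the structural input that makes the whole argument work.

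Next I would establish additivity of the modified Hilbert polynomial along the grading. Writing $P_\cE(\cF,m)$ for the Hilbert polynomial of $\pi_*\cHom(\cE,\cF)(m)$ on $X$, the sheaf $\cHom(\cE,\cF)\cong\bigoplus_{i,j}\cHom(\cE_i,\cF_j)$ has its $(i,j)$ summand $\alpha^{\,j-i}$-twisted, and $\pi_*$ kills every nonzero character; only the diagonal survives, giving $P_\cE(\cF)=\sum_i P_{\cE_i}(\cF_i)$, each term being the modified Hilbert polynomial of $\cF_i$ computed in $\Coh(X,\alpha^i)$. Applying this to graded subsheaves, the defining inequalities of (semi)stability decouple, and I would conclude that $\cF$ is semistable if and only if each $\cF_i$ is semistable in its twisted category and the nonzero $\cF_i$ all share the common reduced Hilbert polynomial of $\cF$. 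The ``only if'' direction tests the subsheaves $\cF_i\subseteq\cF$ to force equality of reduced polynomials; the ``if'' direction is the standard fact that an orthogonal direct sum of semistable objects with a common reduced Hilbert polynomial is again semistable.

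Finally I would assemble the moduli. The tuple $\cP_\chi=(P_{\chi_1},\dots,P_{\chi_n})$ of Hilbert polynomials of the isotypic pieces is a locally constant invariant in flat families, so it partitions $\cM^{ss}(P)$ into open-and-closed substacks indexed by the tuples with $\sum_i P_{\chi_i}=P$; since the total Hilbert polynomial $P$ is fixed, each occurring $P_{\chi_i}$ shares the reduced polynomial of $P$, so the equal-slope constraint is automatic. On each piece, functoriality of the decomposition identifies the moduli problem with that of prescribing, in each character independently, a semistable $\alpha^i$-twisted sheaf with Hilbert polynomial $P_{\chi_i}$; the corresponding moduli stacks and good moduli spaces $\cM^{ss}_\chi(P_{\chi_i})$, $M^{ss}_\chi(P_{\chi_i})$ exist by the twisted version of Nironi's GIT construction on each factor $\Coh(X,\alpha^i)$. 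Taking good moduli spaces, which commute with these operations, yields the asserted decomposition, with the pieces of distinct character assembled character-by-character and the whole ranging over the admissible tuples $\cP$.

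The hardest step is the compatibility of semistability with the grading in the second paragraph: Nironi's stability is defined by a single modified Hilbert polynomial on $\sX$, not character-by-character, so one must prove honestly both the additivity $P_\cE(\cF)=\sum_i P_{\cE_i}(\cF_i)$ (equivalently, the vanishing of $\pi_*$ on the mixed-character Hom sheaves) and the orthogonality of the categories $\Coh(X,\alpha^i)$ supplied by Theorem~\ref{decomp}. Together these guarantee that no destabilizing subsheaf can mix characters, after which the decomposition of the moduli functor is formal.
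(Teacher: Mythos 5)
Your proposal is correct and follows exactly the argument of the cited source: the paper itself gives no proof of this statement beyond the reference to Nironi (plus the remark that algebraic closedness of $k$ can be dropped), and your sketch — character decomposition of $\Coh(\sX)$ with orthogonality, vanishing of $\pi_*$ on the mixed-character $\cHom$ sheaves giving additivity of the modified Hilbert polynomial, decoupling of semistability, and partition of the moduli by the locally constant tuple of eigensheaf polynomials — is precisely Nironi's proof strategy. One small slip: the equality of the reduced Hilbert polynomials of the nonzero isotypic pieces is not ``automatic'' from $\sum_i P_{\chi_i}=P$; it is forced by semistability, exactly as you argue in your second paragraph, and the strata of tuples violating it are simply empty, so the final decomposition is unaffected.
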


Note that although \cite{Nironi_09} assumes the base field \(k\) to be algebraically closed, the proofs apply verbatim under the standing hypotheses fixed at the beginning of this subsection.

\begin{definition}\label{mix}
    Let \(\cF \in \Coh(X,\alpha^{i})\) and \(\cG \in \Coh(X,\alpha^{j})\) be \(\sX\)-coherent sheaves with \(i\neq j\). We call the direct sum \(\cF \oplus \cG\) a \emph{mixed} \(\sX\)-coherent sheaf. We denote the moduli space of mixed \(\sX\)-coherent sheaves by \(\operatorname{Mix}(X)\), and the moduli space of mixed \(\sX\)-vector bundles of rank \(r\) and determinant \(\cL\) by \(\operatorname{Mix}(X,r,\cL)\).
\end{definition}

\begin{proposition}\label{vbdecomp}
    Let $\sX \to X$ be a $\mu_n$-gerbe with class $\alpha\in H^2(X,\mu_n)$. Then the moduli stack of vector bundles of rank $r$ and determinant $\cL$ on $\sX$ admits a disjoint decomposition
    \[
        \cM_{\sX}(r,\cL)\;=\;\coprod_{i\in \ZZ/n\ZZ} \cM^{\alpha^{i}}_X(r,\cL)\;\sqcup\;\Mix(X,r,\cL).
    \]
\end{proposition}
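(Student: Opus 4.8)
The plan is to deduce the decomposition directly from the character decomposition of Theorem~\ref{decomp}, upgraded to flat families. The Bergh--Schn\"urer equivalence $\Coh(\sX)\xrightarrow{\sim}\prod_{i\in\ZZ/n\ZZ}\Coh(X,\alpha^i)$ is functorial and compatible with base change, so for any $k$-scheme $T$ a $T$-flat family $\cE$ of rank-$r$ vector bundles on $\sX_T$ decomposes canonically as $\cE\cong\bigoplus_{i\in\ZZ/n\ZZ}\cE_i$, where $\cE_i$ is the $\alpha^i$-isotypic component for the inertial $\mu_n$-action, hence an $\alpha^i$-twisted coherent sheaf on $X_T$. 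First I would verify that $\cE$ is locally free if and only if each $\cE_i$ is: local freeness may be tested after pulling back along an \'etale cover trivializing the gerbe, over which the decomposition becomes the $\mu_n$-weight decomposition of an ordinary $\mu_n$-equivariant sheaf, and such a sheaf is locally free exactly when its weight components are (using that $\mu_n$ is linearly reductive, as $n$ is invertible in $k$). Consequently each $\cE_i$ is an $\alpha^i$-twisted vector bundle of some rank $r_i$, with $\sum_i r_i = r$.

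Next I would promote this pointwise statement to a decomposition of stacks. Since each $\cE_i$ is a direct summand of the $T$-flat sheaf $\cE$, it is itself $T$-flat, and being locally free its rank $r_i$ is a locally constant function on $T$; hence over a connected base the tuple $(r_0,\dots,r_{n-1})$ is constant. This exhibits $\cM_{\sX}(r)$ as a disjoint union of open-and-closed substacks indexed by the rank tuples $(r_i)$ with $\sum_i r_i = r$, the substack attached to $(r_i)$ being $\prod_i \cM^{\alpha^i}_X(r_i)$ via $\cE\mapsto(\cE_i)_i$, with inverse given by direct sum $(\cE_i)_i\mapsto\bigoplus_i\cE_i$. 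Functoriality of the equivalence guarantees these are mutually inverse morphisms of stacks.

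I would then organize the rank tuples into two families. If exactly one $r_i$ is nonzero, then $\cE=\cE_i$ is a pure $\alpha^i$-twisted vector bundle of rank $r$, contributing the component $\cM^{\alpha^i}_X(r)$; ranging over $i$ gives the first summand. If at least two of the $r_i$ are nonzero, then $\cE$ is by Definition~\ref{mix} a mixed $\sX$-vector bundle, and the union of the corresponding substacks is $\Mix(X,r)$. Finally I would impose the determinant condition: by the determinant construction, $\det\cE\cong\bigotimes_i\det\cE_i$ is an $\alpha^{\sum_i i r_i}$-twisted line bundle, so requiring it to equal a fixed untwisted line bundle $\cL$ forces $\alpha^{\sum_i i r_i}=0$ and cuts out $\cM^{\alpha^i}_X(r,\cL)$ in the pure pieces and $\Mix(X,r,\cL)$ in the mixed locus. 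In the mixed case this is a coupling across the factors (exactly the condition $\bigotimes_i\det\cE_i\cong\cL$ defining $\Mix(X,r,\cL)$ as in Definition~\ref{mix}), while pure components with $\alpha^{ir}\neq 0$ carry no untwisted determinant and are therefore empty.

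The step I expect to be the main obstacle is the family-theoretic upgrade in the second paragraph: one must check that the isotypic decomposition commutes with arbitrary base change and produces summands that are flat over the base, so that the rank function is genuinely locally constant and the partition is by open-and-closed substacks rather than merely a set-theoretic stratification. This rests on the linear reductivity of $\mu_n$, which makes projection onto each isotypic component an exact, base-change-compatible operation; once this is secured, the identification of the pure and mixed pieces and the restriction to fixed determinant are formal.
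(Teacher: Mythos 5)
Your proposal is correct and follows essentially the same route as the paper: the paper's proof is a three-line application of the character decomposition of Theorem~\ref{decomp}, splitting any bundle as $\bigoplus_i V_i$ and sorting the pure summands from the mixed ones exactly as you do. The extra material you supply --- base-change compatibility of the isotypic projections via linear reductivity of $\mu_n$, flatness of the summands, local constancy of the rank tuple making the pieces open and closed, and the determinant bookkeeping $\det\cE\cong\bigotimes_i\det\cE_i$ --- is precisely the family-level detail the paper leaves implicit (the determinant analysis appearing separately in Lemma~\ref{mixvb}(iv)), so it strengthens rather than diverges from the published argument.
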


\begin{proof}
    By the character decomposition for $\mu_n$-gerbes (Theorem~\ref{decomp}), any vector bundle $V$ on $\sX$ decomposes canonically as $V \;=\; \bigoplus_{i\in\ZZ/n\ZZ} V_i,$ where each $V_i$ is an $\alpha^{i}$-twisted vector bundle on $X$. Thus either $V$ lies entirely in a single character summand (giving the components $\cM^{\alpha^{i}}_X(r,\cL)$), or it has nonzero contributions from at least two summands, in which case it is a mixed bundle in the sense of Definition~\ref{mix}. This yields the stated decomposition.
\end{proof}

\begin{remark}
    By Theorem~\ref{classifyPic}, the determinant line bundle $\cL$ of a vector bundle on the gerbe $\sX$ must necessarily be an $\alpha^{\,j}$\!-twisted line bundle for some $0 \le j < n$.
\end{remark}

\begin{lemma}\label{mixvb}
    Let $\sX \to X$ be a $\mu_n$–gerbe represented by $\alpha \in H^{2}(X,\mu_n)$, and let $\cF \in \Mix^{ss}(X,r,\cL)$ be a mixed semistable $\sX$–vector bundle, where $\cL$ is an $\alpha^{j}$–twisted line bundle on $X$. Then $\cF$ decomposes as
    \[
        \cF \;\cong\;\cF_{0} \oplus \cF_{\alpha} \oplus\cdots \oplus\cF_{\alpha^{n-1}},
    \]
    where each $\cF_{\alpha^{i}}$ is an $\alpha^{i}$–twisted vector bundle of rank $r_i$. These summands satisfy:
    \begin{enumerate}
        \item[(i)] $\displaystyle \sum_{i=0}^{n-1} r_i = r$.
        
        \item[(ii)] For every $i$ with $r_i \neq 0$, one has  
        \[
            \frac{\deg \cF_0}{r_0}\;=\;\frac{\deg \cF_{\alpha}}{r_1}\;=\;\cdots\;=\;\frac{\deg \cF_{\alpha^{n-1}}}{r_{n-1}},
        \]
        so all nonzero summands have the same slope.

        \item[(iii)] $\displaystyle \sum_{i=0}^{n-1} \deg \cF_{\alpha^{i}}= \deg \cL.$
        
        \item[(iv)] The twisting indices satisfy  
        \[
            \sum_{i=0}^{n-1} r_i\, i \equiv j \pmod{n}.
        \]
    \end{enumerate}
\end{lemma}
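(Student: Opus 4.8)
The plan is to derive all four assertions from the canonical character decomposition, reserving semistability only for the equal-slope statement~(ii). First I would invoke Theorem~\ref{decomp} (equivalently Proposition~\ref{vbdecomp}): the equivalence $\Coh(\sX) \xrightarrow{\;\sim\;} \prod_{i \in \ZZ/n\ZZ} \Coh(X,\alpha^i)$ splits the $\sX$–vector bundle $\cF$ canonically as $\cF \cong \bigoplus_{i=0}^{n-1}\cF_{\alpha^i}$, with each $\cF_{\alpha^i}$ a locally free $\alpha^i$–twisted sheaf of some rank $r_i$. Since rank is additive over direct sums, assertion~(i), $\sum_i r_i = \operatorname{rk}\cF = r$, is immediate.

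For (iv) and (iii) I would track determinants. By Proposition~\ref{operation} (the determinant of a rank-$r_i$ $\alpha^i$–twisted bundle is $(\alpha^i)^{r_i}=\alpha^{\,i r_i}$–twisted), and by multiplicativity of $\det$ over direct sums, $\det\cF \cong \bigotimes_i \det\cF_{\alpha^i}$ is $\alpha^{\,\sum_i i r_i}$–twisted. But $\det\cF = \cL$ is $\alpha^{j}$–twisted by hypothesis, and an $\sX$–line bundle lies in exactly one character component (Theorem~\ref{decomp}); comparing twisting characters in $\ZZ/n\ZZ$ forces $\sum_i i r_i \equiv j \pmod{n}$, which is~(iv). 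Taking degrees of the same isomorphism $\cL \cong \bigotimes_i \det\cF_{\alpha^i}$ and using additivity of degree under tensor products of line bundles gives $\deg\cL = \sum_i \deg\cF_{\alpha^i}$, which is~(iii).

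The substantive point is (ii), where semistability enters. Each summand $\cF_{\alpha^i}$ is a sub-$\cO_{\sX}$–module of $\cF$, hence an admissible test subbundle for the slope-semistability of $\cF$ on $\sX$; therefore $\mu(\cF_{\alpha^i}) \le \mu(\cF)$ for every $i$ with $r_i \neq 0$. Rewriting this as $\deg\cF_{\alpha^i} \le r_i\,\mu(\cF)$ and summing over $i$ yields
\[
\sum_i \deg\cF_{\alpha^i} \;\le\; \Big(\sum_i r_i\Big)\mu(\cF) \;=\; r\,\mu(\cF) \;=\; \deg\cF \;=\; \sum_i \deg\cF_{\alpha^i},
\]
so the inequality is an equality termwise. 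Hence $\deg\cF_{\alpha^i} = r_i\,\mu(\cF)$, i.e.\ $\mu(\cF_{\alpha^i}) = \mu(\cF)$, for every nonzero summand, which is exactly the equal-slope assertion~(ii).

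The main obstacle I anticipate is definitional rather than computational: one must confirm that the canonical character summands are legitimate test objects for semistability on the gerbe—that is, that slope-semistability of a \emph{mixed} $\sX$–bundle is defined using sub-$\cO_{\sX}$–modules rather than merely single-character subsheaves, and that the degree (with respect to the fixed polarization on $X$) is additive in short exact sequences. Once this is pinned down, consistently with Theorem~\ref{nironi} and the good-moduli-space framework, the weighted-average argument closes~(ii) with no further input.
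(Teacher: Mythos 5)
Your proposal is correct and takes essentially the same approach as the paper: the character decomposition of Theorem~\ref{decomp} yields the splitting and (i), determinant multiplicativity together with the twisting bookkeeping of Proposition~\ref{operation} yields (iii) and (iv), and semistability yields (ii). The only difference is cosmetic: in (ii) you prove the equal-slope statement directly via the subbundle inequalities and a weighted-average argument, whereas the paper simply invokes the standard fact that a direct sum is semistable if and only if the summands are semistable of equal slope---your argument is in effect a proof of the relevant direction of that fact.
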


\begin{proof}
    (i) This follows immediately by computing the ranks of the direct summands in the decomposition $\cF = \bigoplus_{i=0}^{n-1} \cF_{\alpha^{i}}$.

    \medskip
    (ii) If $\cF$ and $\cG$ are vector bundles on $\sX$, then $\cF \oplus \cG$ is semistable if and only if $\mu(\cF)=\mu(\cG)$ and both $\cF$ and $\cG$ are semistable. Applying this to the decomposition of $\cF$, we obtain the desired equality of slopes for all summands with $r_i\neq 0$.

    \medskip
    (iii) Since $\det : K(\sX) \to \Pic(\sX)$ is a group homomorphism, we have
    \[
        \det(\cF)\;=\;\bigotimes_{i=0}^{n-1} \det(\cF_{\alpha^{i}})\;=\;\cL.
    \]
    Taking degrees gives $\deg(\cL) = \sum_{i=0}^{n-1} \deg(\cF_{\alpha^{i}})$.

    \medskip
    (iv) As in part (iii), the determinant of $\cF$ is an $\Bigl(\prod_{i=0}^{n-1} \alpha^{r_i i}\Bigr)$–twisted line bundle. Since $\det(\cF)=\cL$ is $\alpha^{j}$–twisted, we must have
    \[
        \prod_{i=0}^{n-1} \alpha^{r_i i} = \alpha^{j},
    \]
    which is equivalent to the congruence $\sum_{i=0}^{n-1} r_i i \equiv j \pmod{n}$.
\end{proof}

We classify two examples in the following:

\begin{theorem}\label{rank2}
    Let $\sX \to X$ be a $\mu_{2}$-gerbe representing $\alpha \in H^{2}(X,\mu_{2})$ with $\alpha_{\bar{k}} = 0$. Then
    \[
        M^{ss}_{\sX}(2,\cO_X)\;\cong\;M^{ss}_{X}(2,\cO_X)\;\coprod\;M^{ss,\alpha}_{X}(2,\cO_X).
    \]
\end{theorem}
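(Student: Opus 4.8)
The plan is to reduce to the character decomposition of Proposition~\ref{vbdecomp} and then show that, once the determinant is fixed to be $\cO_X$, the mixed component contributes nothing to the semistable locus. Specializing Proposition~\ref{vbdecomp} to $n=2$, $r=2$, $\cL=\cO_X$ gives a decomposition of moduli stacks into open and closed substacks
\[
    \cM_{\sX}(2,\cO_X)\;=\;\cM_X(2,\cO_X)\;\sqcup\;\cM^{\alpha}_X(2,\cO_X)\;\sqcup\;\Mix(X,2,\cO_X),
\]
where the first piece is the untwisted component $\cM^{\alpha^0}_X(2,\cO_X)$. Restricting to semistable loci and passing to good moduli spaces, the statement will follow once I show (a) the semistable mixed locus is empty, and (b) the formation of good moduli spaces commutes with this disjoint decomposition.

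Step (a) is a parity obstruction and is the crux of the argument. To carry it out, I would take $\cF\in\Mix^{ss}(X,2,\cO_X)$ and apply Lemma~\ref{mixvb}: it splits as $\cF\cong\cF_0\oplus\cF_\alpha$ with $\cF_{\alpha^i}$ an $\alpha^i$-twisted bundle of rank $r_i$, and being genuinely mixed forces $r_0,r_1\ge 1$, whence $r_0=r_1=1$ by part~(i). Since $\cO_X$ is untwisted (that is, $\alpha^0$-twisted) one has $j=0$, so part~(iv) produces the congruence $r_0\cdot 0 + r_1\cdot 1 = 1 \equiv 0 \pmod 2$, which is absurd. Thus $\Mix^{ss}(X,2,\cO_X)=\varnothing$; in fact, by Proposition~\ref{operation} the determinant of any $(1,1)$-mixed bundle is genuinely $\alpha$-twisted and hence never isomorphic to $\cO_X$, so the entire mixed stack with this determinant is empty irrespective of semistability.

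For step (b), the resulting open-closed decomposition of the semistable stack, $\cM^{ss}_{\sX}(2,\cO_X)=\cM^{ss}_X(2,\cO_X)\sqcup\cM^{ss,\alpha}_X(2,\cO_X)$, is preserved by the good moduli space morphism, whose formation commutes with disjoint unions; the good moduli spaces of the two surviving components are $M^{ss}_X(2,\cO_X)$ and $M^{ss,\alpha}_X(2,\cO_X)$, giving the asserted isomorphism. The hypothesis $\alpha_{\bar k}=0$ enters precisely here: over $\bar k$ the gerbe is trivial, so by Lemma~\ref{ln=l} the component $\cM^{ss,\alpha}_X(2,\cO_X)_{\bar k}$ is isomorphic to the untwisted moduli stack, ensuring that it is a genuinely nonempty component of the expected dimension rather than being vacuous.

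The step I expect to require the most care is the implicit compatibility of semistability: one must confirm that a destabilizing subsheaf of a pure $\alpha^i$-twisted bundle is again purely $\alpha^i$-twisted, so that semistability computed on $\sX$ agrees with slope-semistability of the corresponding twisted bundle on $X$. This holds because the character decomposition of Theorem~\ref{decomp} is functorial and every subsheaf respects the resulting $\mu_2$-grading; granting this, the identification of the two components with $\cM^{ss}_X(2,\cO_X)$ and $\cM^{ss,\alpha}_X(2,\cO_X)$ is immediate and the proof concludes.
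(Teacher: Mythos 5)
Your proposal is correct and follows essentially the same route as the paper: both reduce via Proposition~\ref{vbdecomp} to showing $\Mix(X,2,\cO_X)=\varnothing$, and both derive this from Lemma~\ref{mixvb}, where the forced rank partition $(1,1)$ makes the determinant an $\alpha$-twisted line bundle (equivalently, violates the congruence in part~(iv)), contradicting the untwisted determinant $\cO_X$. Your additional remarks on good moduli spaces commuting with disjoint unions and on semistability respecting the character grading are sound elaborations of points the paper leaves implicit, not a different argument.
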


\begin{proof}
    By Proposition~\ref{vbdecomp}, it suffices to show that $\Mix(X,2,\cO_X)=\varnothing$. By Lemma~\ref{mixvb}(i), the only remaining possibility for a mixed rank-\(2\) vector bundle is one of the form $\cL \oplus \cL_{\alpha}$, where $\cL \in \Pic(X)$ and $\cL_{\alpha} \in \Pic_{\alpha}(X)$.

    However,
    \[
        \det(\cL \oplus \cL_{\alpha})= \det(\cL)\otimes \det(\cL_{\alpha})= \cL \otimes \cL_{\alpha},
    \]
    which is an $\alpha$-twisted line bundle. This contradicts Lemma~\ref{mixvb}(iv), which forces the determinant of a mixed bundle of type $(1,1)$ to be \emph{untwisted}. Hence no such mixed semistable bundle exists, and the claimed decomposition follows.
\end{proof}

\begin{theorem}
    Let $\sX \to X$ be a $\mu_{3}$\nobreakdash-gerbe representing $\alpha \in H^{2}(X,\mu_{3})$ with $\alpha_{\bar{k}} = 0$.  Then
    \[
        M^{ss}_{\sX}(3,\cO_{X})\;\cong\;M^{ss}_{X}(3,\cO_{X})\,\coprod\,M^{ss,\alpha}_{X}(3,\cO_{X})\,\coprod\,M^{ss,\alpha^{2}}_{X}(3,\cO_{X})\,\coprod\, \Mix(X,3,\cO_{X}),
    \]
    where $\Mix(X,3,\cO_{X})$ is the subspace of
    \[
        \Pic^{0}_{X} \;\oplus\; \Pic^{0}_{X,\alpha} \;\oplus\;\Pic^{0}_{X,\alpha^{2}}
    \]
    consisting of closed points $(\cL,\cL_{\alpha},\cL_{\alpha^{2}})$ satisfying
    \[
        \cL \otimes \cL_{\alpha} \otimes \cL_{\alpha^{2}}\;\cong\;\cO_{X},
    \]
    after a finite separable extension of the base field.
\end{theorem}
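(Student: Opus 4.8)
The plan is to feed the stack-level decomposition of Proposition~\ref{vbdecomp} through the numerical constraints of Lemma~\ref{mixvb} and then pass to good moduli spaces. First I would specialize Proposition~\ref{vbdecomp} to $n=3$ and $\cL=\cO_X$, giving
\[
\cM_{\sX}(3,\cO_X)\;=\;\coprod_{i\in\ZZ/3\ZZ}\cM^{\alpha^{i}}_X(3,\cO_X)\;\sqcup\;\Mix(X,3,\cO_X).
\]
Because the rank vector $(r_0,r_1,r_2)$ is a locally constant invariant, each of these pieces is open and closed; hence the decomposition survives restriction to semistable loci, and since good moduli space formation commutes with disjoint unions it descends to a four-term decomposition of $M^{ss}_{\sX}(3,\cO_X)$. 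The three pure pieces are exactly $M^{ss}_X(3,\cO_X)$, $M^{ss,\alpha}_X(3,\cO_X)$, and $M^{ss,\alpha^{2}}_X(3,\cO_X)$; here fixing the determinant to be $\cO_X$ is meaningful because the determinant of a rank-$3$ $\alpha^{i}$-twisted bundle is $\alpha^{3i}=\alpha^{0}$-twisted, i.e.\ untwisted.

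The substance of the proof is the identification of the mixed piece. By Lemma~\ref{mixvb}(i) a mixed rank-$3$ bundle has rank vector with $r_0+r_1+r_2=3$ and at least two nonzero entries, and by Lemma~\ref{mixvb}(iv) with $j=0$ the twisting congruence reads $r_1+2r_2\equiv 0\pmod 3$. Enumerating the mixed rank vectors, the six permutations of $(2,1,0)$ all fail this congruence, so $(1,1,1)$ is the unique admissible type. Thus every mixed semistable bundle is a direct sum $\cF\cong\cL_0\oplus\cL_1\oplus\cL_2$ of an untwisted, an $\alpha$-twisted, and an $\alpha^{2}$-twisted line bundle. Semistability of a direct sum forces the summands to share a common slope (Lemma~\ref{mixvb}(ii)); since each summand is a line bundle its slope is its degree, and together with $\deg\cL_0+\deg\cL_1+\deg\cL_2=\deg\cO_X=0$ (Lemma~\ref{mixvb}(iii)) this pins every summand to degree $0$. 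Hence the polystable representatives are precisely the triples in $\Pic^0_X\oplus\Pic^0_{X,\alpha}\oplus\Pic^0_{X,\alpha^{2}}$ with $\cL_0\otimes\cL_1\otimes\cL_2\cong\cO_X$.

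It remains to read this off at the level of the good moduli space. Closed points of $\cM^{ss}_{\sX}(3,\cO_X)$ correspond to polystable objects, and for type $(1,1,1)$ the polystable object is already the direct sum, its three line-bundle summands distinguished by their distinct characters. I would therefore identify the good moduli space of the mixed locus with the fibre over $\cO_X$ of the tensor map
\[
\Pic^0_X\times\Pic^0_{X,\alpha}\times\Pic^0_{X,\alpha^{2}}\longrightarrow\Pic^0_X,
\]
which is defined over $k$ since $\alpha^{0+1+2}=\alpha^{3}$ is trivial; the required family is supplied by the universal twisted line bundles on the twisted Picard factors.

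The step I expect to be the main obstacle is the descent bookkeeping in this last identification. By Remark~\ref{twistratpt} the factors $\Pic^0_{X,\alpha}$ and $\Pic^0_{X,\alpha^{2}}$ are $\Pic^0_X$-torsors that carry no $k$-rational point when $\beta=p_1(\alpha)$ is nontrivial, so although the mixed moduli space and the multiplication map are genuinely defined over $k$, any individual triple $(\cL_0,\cL_1,\cL_2)$ realizing a given closed point exists only over the (finite separable) residue field of that point. Keeping the $k$-structure of the moduli space separate from the nonexistence of $k$-points on the individual twisted Picard torsors is exactly what the qualifier ``after a finite separable extension of the base field'' encodes, and it is the one place where care is required.
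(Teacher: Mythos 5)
Your proposal is correct and takes essentially the same route as the paper: Proposition~\ref{vbdecomp} reduces the claim to describing the mixed locus, Lemma~\ref{mixvb}(i) and~(iv) (the congruence $r_1+2r_2\equiv 0 \pmod 3$ with $j=0$) eliminate every mixed rank vector except $(1,1,1)$, and Lemma~\ref{mixvb}(ii)--(iii) pin all three line-bundle summands to degree~$0$, identifying the mixed piece with the triples $(\cL,\cL_\alpha,\cL_{\alpha^2})$ whose tensor product is $\cO_X$. Your additional bookkeeping --- the open-and-closedness of the rank-vector strata, the compatibility with good moduli space formation, and the explanation via Remark~\ref{twistratpt} of why the qualifier ``after a finite separable extension of the base field'' is needed --- is left implicit in the paper's proof and is a correct elaboration rather than a different argument.
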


\begin{proof}
    By Proposition~\ref{vbdecomp}, it suffices to describe $\Mix(X,3,\cO_{X})$. By Lemma~\ref{mixvb} (i), the possible rank partitions are
    \[
        (2,1,0),\ (1,2,0),\ (0,1,2),\ (0,2,1),\ (2,0,1),\ (1,0,2),\ (1,1,1).
    \]
    Lemma~\ref{mixvb}(iv) then forces \(\sum_{i=0}^{2} r_{i} i \equiv 0 \pmod 3\), which leaves only the case $(1,1,1)$.

    Next, Lemma~\ref{mixvb}(ii) and (iii) imply that each summand must have degree $0$. Thus a mixed bundle is given by a triple
    \[
        (\cL,\,\cL_{\alpha},\,\cL_{\alpha^{2}}) \in \Pic^{0}_{X} \oplus \Pic^{0}_{X,\alpha} \oplus \Pic^{0}_{X,\alpha^{2}}
    \]
    satisfying the determinant condition
    \[
        \cL \otimes \cL_{\alpha} \otimes \cL_{\alpha^{2}} \cong \cO_{X},
    \]
    which is exactly the locus stated in the theorem.
\end{proof}

\begin{remark}
    In general, for a $\mu_{n}$\nobreakdash-gerbe $\sX \to X$, the moduli space $M_{\sX}^{ss}(r,\cO_{X})$ decomposes as the disjoint union of the components $M_{X}^{ss,\alpha^{i}}(r,\cL_{i}),$ for $0 \le i < n,$ together with the locus of mixed vector bundles. The mixed part is itself a disjoint union of products of moduli spaces of $\alpha^{i}$\nobreakdash-twisted semistable vector bundles of smaller ranks and fixed determinants.
\end{remark}

\section{Moduli space of twisted vector bundles of rank~2 with trivial determinant}In this section, we explicitly describe the moduli spaces of $\alpha$–twisted  vector bundles of rank~2 and trivial determinant. We begin with the essentially trivial case, where the situation parallels the classical moduli space of vector bundles on a curve. We then treat the optimal case, in which the obstruction given by the Brauer class $\alpha$ prevents the determinantal line bundle from descending from the algebraic closure. In this latter setting, we show how the Brauer class controls both the geometry of the moduli space and its deviation from the untwisted case.

\subsection{Essentially trivial case} In this subsection, our arguments follow those of \cite{Beauville_Narasimhan_Ramanan_89} and \cite{Narasimhan_Ramanan_69}, with only minor modifications required in the twisted setting. Unless otherwise specified, we fix a smooth, projective, and geometrically connected curve $C$ of genus $2$ over $k$.

\begin{construction}\label{theta}
    (Theta divisor) This construction works over an arbitrary field $k$. Let $\Delta : C \to C \times C$ be the diagonal morphism, $x \mapsto (x,x)$. The associated divisor $\Delta \subset C \times C$ determines a line bundle $\cL_\Delta := \cO_{C \times C}(\Delta).$

    For each closed point $x \in C$, with residue field $k(x)$, the fibre $C \times \{x\}$ is canonically isomorphic to $C_{k(x)}$, and the restriction $\cL_x := \cL_\Delta|_{C \times \{x\}}$ is a line bundle of degree $1$ on $C_{k(x)}$. Thus $\cL_\Delta$ defines a family of degree-$1$ line bundles on $C$ parametrized by $C$.
    
    By the universal property of the Picard scheme, any family $\{\cL_t\}$ of degree-$d$ line bundles parametrized by an algebraic space $T$ determines a morphism $T \to \Pic^d_{C/k}$. Applying this to the family $\{\cL_x\}_{x \in C}$ yields a morphism $C \rightarrow \Pic^1_{C/k}.$

    Since $\dim \Pic^1_{C/k} = g = 2$, the image of this morphism is a nonsingular divisor on $\Pic^1_{C/k}$, denoted by $\theta$. We denote the associated line bundle by $\cL_\theta := \cO_{\Pic^1_{C/k}}(\theta)$.
\end{construction}

Let $\alpha \in H^2(C,\mu_2)$ be an essentially trivial $\mu_2$-gerbe. By Proposition~\ref{esstrivdeg}, there exists an $\alpha$-twisted line bundle $\cL_\alpha$ of degree $k_\alpha := \tfrac{1}{2}\,\varphi(\alpha)$ such that $(\cL_\alpha)^{\otimes 2} \cong \cL$ for some line bundle $\cL$ on $C$ of degree $\varphi(\alpha)$.

Tensoring by $\cL_\alpha^\vee$ gives isomorphisms between the ordinary and twisted Picard schemes:
\[
    \tw_0 : \Pic^0_{C/k} \xrightarrow{\;\simeq\;} \Pic^{-k_\alpha}_\alpha,\qquad \tw_1 : \Pic^1_{C/k} \xrightarrow{\;\simeq\;} \Pic^{\,1-k_\alpha}_\alpha.
\]

Applying $\tw_1$ to the map $t : C \rightarrow \Pic^1_{C/k}$ gives a morphism
\[
    t_\alpha := \tw_1 \circ t : C \longrightarrow \Pic^{\,1-k_\alpha}_\alpha.
\]
Concretely, $t_\alpha(x)$ corresponds to the $\alpha$-twisted line bundle $\cL_x \otimes \cL_\alpha^\vee$ on $C$.

By Construction~\ref{theta}, this family of twisted line bundles defines a divisor
\[
    \theta_\alpha := t_\alpha(C) \subset \Pic^{\,1-k_\alpha}_\alpha(C),
\]
called the \emph{twisted theta divisor}. We denote the associated line bundle by $\cL_{\theta_\alpha}$. By construction, one has $\tw_1^{*}\cL_{\theta_\alpha} = \cL_\theta.$

In the following, we assume that $C(k)\neq\varnothing$. In this case, the component $\Pic^1_{C/k}$ of the Picard scheme is a torsor under $\Pic^0_{C/k}$ that is trivialized by the choice of a $k$-rational point on $C$. Thus we do not distinguish between the Picard scheme $\Pic^1_{C/k}$ and the notation $\Pic^1(C)$ for degree $1$ line bundles on $C$.

\begin{theorem}
    Let $\alpha$ be an essentially trivial $\mu_2$-gerbe with $\alpha_{\bar{k}} = 0$. Then the morphism
    \[
        M^{ss,\alpha}_C(2,\cO_C)\;\longrightarrow\;\PP\!\bigl(H^0\bigl(\Pic^1(C),\, \cL_{\theta}^{\otimes 2}\bigr)\bigr),
    \]
    induced by the linear system $|\cL_\theta^{\otimes 2}|$, is an isomorphism.
\end{theorem}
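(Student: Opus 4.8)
The plan is to produce the theta morphism over $k$ by hand, to recognize it as the classical Narasimhan--Ramanan map after base change to $\bar k$, and then to descend the isomorphism property. Write $M := M^{ss,\alpha}_C(2,\cO_C)$; by the existence theorem it is a smooth projective $k$-variety of dimension $(r^2-1)(g-1)=3$, and since $\alpha_{\bar k}=0$ we have $k_\alpha=0$, so $\Pic^{1-k_\alpha}_\alpha=\Pic^1_\alpha\cong\Pic^1(C)$ via $\tw_1$ with $\tw_1^*\cL_{\theta_\alpha}=\cL_\theta$. I first record that $\dim H^0(\Pic^1(C),\cL_\theta^{\otimes 2})=2^g=4$, so the target is $\PP^3$; this is the standard theta-function count, and being geometric it may be checked over $\bar k$.

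First I would build a universal theta divisor over $k$. Choose an $\alpha$-twisted Poincaré family $\cP_\alpha$ on $C\times\Pic^1_\alpha$, available because $C(k)\neq\varnothing$ and, by Remark~\ref{twistratpt}, $\Pic^1_\alpha$ is the trivial torsor, so one may twist an honest Poincaré bundle by $\cL_\alpha$. Since $\cE_{\alpha,\univ}$ and $\cP_\alpha$ are both $\alpha$-twisted, the sheaf $\cHom(\cE_{\alpha,\univ},\cP_\alpha)$ on $C\times M\times\Pic^1_\alpha$ is untwisted by Proposition~\ref{operation}, and for a degree-$0$ bundle $E$ and $\xi\in\Pic^1_\alpha$ one has $\chi(C,\cHom(E,\xi))=0$ by Riemann--Roch. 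Hence the determinant of cohomology along $C$ produces a line bundle equipped with a canonical section cutting out a universal theta divisor $\cD\subset M\times\Pic^1_\alpha$, exactly as in the determinantal line bundle construction of Section~3. A Grothendieck--Riemann--Roch (equivalently theorem-of-the-cube) computation, carried out on the untwisted model and transported back by $\tw_1$, identifies $\cO(\cD)|_{\{E\}\times\Pic^1_\alpha}$ with $\cL_{\theta_\alpha}^{\otimes 2}$, i.e.\ with $\cL_\theta^{\otimes 2}$ on $\Pic^1(C)$.

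By the seesaw principle there is a line bundle $\Lambda$ on $M$, namely the descended determinantal line bundle of the earlier proposition, with $\cO(\cD)\cong p_M^*\Lambda\boxtimes\cL_{\theta_\alpha}^{\otimes 2}$, and Künneth lets me read the canonical section of $\cO(\cD)$ as the datum defining a morphism $f\colon M\to\PP\!\bigl(H^0(\Pic^1(C),\cL_\theta^{\otimes 2})\bigr)$ sending $E$ to $\Theta_E:=\cD\cap(\{E\}\times\Pic^1_\alpha)$, \emph{provided} the section is nonzero on every fibre $\{E\}\times\Pic^1_\alpha$. This is where semistability enters: I must know that each semistable $E$ admits some $\xi$ with $h^0(C,\cHom(E,\xi))=0$, so that $\Theta_E$ is a proper divisor rather than all of $\Pic^1_\alpha$. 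I would deduce this from the corresponding geometric statement in the Narasimhan--Ramanan analysis, since base-point-freeness is insensitive to the base field.

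Finally I would base change to $\bar k$. By Lemma~\ref{ln=l} the stack $\cM^{ss,\alpha}_{C_{\bar k}}(2,\cO_{C_{\bar k}})$ is an untwisted moduli stack (up to twisting the determinant by a degree-$0$ line bundle), and under this identification $f_{\bar k}$ is precisely the classical theta map. By Narasimhan--Ramanan~\cite{Narasimhan_Ramanan_69} (see also \cite{Beauville_Narasimhan_Ramanan_89}) this map is an isomorphism onto $\PP^3=|2\theta|$ for a genus-$2$ curve. Since being an isomorphism is fpqc-local on the target and $\Spec\bar k\to\Spec k$ is faithfully flat, the $k$-morphism $f$ is itself an isomorphism. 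I expect the main obstacle to be the construction of $f$ as a genuine $k$-morphism: identifying the determinant-of-cohomology line bundle with $\cL_\theta^{\otimes 2}$ through the Grothendieck--Riemann--Roch computation, and establishing base-point-freeness from semistability. Everything else is formal once the classical isomorphism is granted over $\bar k$.
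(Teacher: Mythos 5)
Your proposal is correct in outline, but it takes a genuinely different route from the paper's. The paper re-runs the Narasimhan--Ramanan argument internally in the twisted category: every semistable $\alpha$-twisted $V$ is $S$-equivalent to an extension $0 \to \xi^{-1} \to V \to \xi \to 0$ with $\xi \in \Pic^1_\alpha(C)$, the evaluation sequence for $\cL_{\theta_\alpha}^{\otimes 2}$ identifies $\PP\bigl(H^1(C,\xi^{-2})\bigr)$ with divisors in $|2\theta_\alpha|$ through $\xi$, and the assignment $V \mapsto D_V$ is shown directly to be bijective on geometric points, compatible with families, and invertible by recovering the extension class. You instead build the morphism over $k$ by determinant of cohomology on the product with $\Pic^1_\alpha$ (twisted Poincar\'e family, seesaw, K\"unneth) and then descend the isomorphism property from $\bar{k}$, where Lemma~\ref{ln=l} untwists everything and classical Narasimhan--Ramanan applies; this is essentially the transport strategy the paper itself uses one subsection later for Theorem~\ref{bnr}, specialized to genus $2$. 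Indeed, since $\alpha$ is essentially trivial over $k$ (not merely over $\bar{k}$), the twist $(-)\otimes\cL_\alpha$ is already a $k$-isomorphism from an untwisted moduli space with degree-$0$ determinant, so you could even dispense with Galois descent. What your route buys is economy: no re-verification of injectivity and surjectivity in the twisted setting. What it costs are the points you partially flag, plus two you should repair: the universal twisted sheaf lives on $C \times \cM^{ss,\alpha}_C(2,\cO_C)$, not on $C\times M$, so your divisor $\cD$ is first built on the stack and must be descended using the stabilizer-triviality argument of the paper's descent proposition (sections descend since pushforward along a good moduli space morphism preserves $\cO$); and your opening assertion that $M$ is smooth is not given by the existence theorem (the stack is smooth, the good moduli space a priori is not) --- harmless here only because it is never used.
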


\begin{proof}[Sketch of proof]
    This follows the argument of \cite{Narasimhan_Ramanan_69}, with the only changes occurring in the use of twisted line bundles. We outline the main steps.

    \medskip
    \noindent\textbf{Step 1.}
    Every $\alpha$-twisted semistable vector bundle $V \in M_C^{ss,\alpha}(2,\cO_C)$ is $S$-equivalent to an extension
    \[
        0 \longrightarrow \xi^{-1} \longrightarrow V \longrightarrow \xi \longrightarrow 0
    \]
    for some $\xi \in \Pic^1_{\alpha}(C)$. Thus the $S$-equivalence class of $V$ is determined by an element of
    \[
        \Ext^1(\xi, \xi^{-1}) \;\cong\; H^1(C,\xi^{-2}).
    \]

    \medskip
    \noindent\textbf{Step 2.}
    Fix $\xi \in \Pic^1_{\alpha}(C)$. There is an exact sequence on $\Pic^1_{\alpha}(C)$
    \[
        0 \longrightarrow E \longrightarrow \Pic^1_{\alpha}(C)\times H^0\!\left(\Pic^1_{\alpha}(C), \cL_{\theta_\alpha}^{\otimes 2}\right)\xrightarrow{\ \mathrm{ev}\ }\cL_{\theta_\alpha}^{\otimes 2}\longrightarrow 0,
    \]
    where $\mathrm{ev}$ is the evaluation map. The fiber of $E$ over $\xi$ is identified with $H^1(C,\xi^{-2})$. In particular, $\PP\bigl(H^1(C,\xi^{-2})\bigr)$ parametrizes the set of effective divisors on $\Pic^1_{\alpha}(C)$ passing through $\xi$ and linearly equivalent to $2\theta_\alpha$.

    \medskip
    \noindent\textbf{Step 3.}
    For a semistable $\alpha$-twisted bundle $V$ of rank $2$ and trivial determinant, define
    \[
        C_V \;=\;\{\ \xi \in \Pic^1_{\alpha}(C) \mid H^0(C,\xi \otimes V) \neq 0 \ \}.
    \]
    Then $C_V$ is the support of a uniquely determined effective divisor $D_V$ on $\Pic^1_{\alpha}(C)$, linearly equivalent to $2\theta_\alpha$. Moreover, $D_V$ depends only on the $S$-equivalence class of $V$. This assigns to $V$ a well-defined point $D_V \;\in\;\PP\!\left(H^0\bigl(\Pic^1_{\alpha}(C),\cL_{\theta_\alpha}^{\otimes 2}\bigr)\right)$.
    
    \medskip
    \noindent\textbf{Conclusion.}
    The map
    \[
        D : M_C^{ss,\alpha}(2,\cO_C)\longrightarrow\PP\!\left(H^0\bigl(\Pic^1_{\alpha}(C),\cL_{\theta_\alpha}^{\otimes 2}\bigr)\right),\qquad V \longmapsto D_V,
    \]
    is bijective on geometric points and compatible with families. Its inverse is given by recovering the extension class $H^1(C,\xi^{-2})$ from the divisor passing through~$\xi$. Thus $D$ is an isomorphism.
\end{proof}

\begin{remark}
    Arguing as above, one can similarly show that if $\cL$ is a line bundle of odd degree and $\alpha_{\bar{k}} = 1 \in H^2(C_{\bar{k}},\mu_2)$, then there is an isomorphism
    \[
    M^{ss,\alpha}_C(2,\cL)\;\xrightarrow{\;\sim\;}\PP\!\bigl(H^0(\Pic^1_\alpha(C), \cL_{\theta_\alpha}^{\otimes 2})\bigr).
    \]
\end{remark}

\subsection{Generalized theta divisor and higher genus cases}
In this subsection, we recover a result of \cite{Beauville_Narasimhan_Ramanan_89} for smooth projective curves $C$ of genus $\geq 2$, in the setting of an essentially trivial $\mu_n$-gerbe. To avoid any ambiguity, we assume throughout that $\operatorname{char}(k)=0$. However, we note that in the special case of rank $2$ and trivial determinant when $g=2$, the statement remains valid whenever $\operatorname{char}(k)\neq 2$.

\begin{definition}
    Let $V$ be an $\alpha$-twisted vector bundle on $C$. The \emph{generalized twisted theta divisor} associated to $V$ is the set
    \[
        \Theta_V\;:=\;\bigl\{\, [E] \in M^{ss,\alpha}_{C/k}(n,\cL)\;\big|\;H^0\!\bigl(C,\, E \otimes V^{\vee}\bigr) \neq 0\bigr\}.
    \]
\end{definition}

\begin{remark}
    In the classical (untwisted) setting, the locus $\Theta_V$ is a Cartier divisor on $M^{ss}_C(n,\cL)$, and its associated line bundle is precisely the corresponding determinantal line bundle.  In particular, $\Theta_V$ generates the Picard group of the moduli space of vector bundles, as shown in \cite{Drezet_Narasimhan_89}.
\end{remark}

\begin{proposition}\label{pulltheta}
    Let $W$ be an $\alpha$-twisted vector bundle on $C$ of rank $n$ with $\det(W)=\cL\otimes\cN$, and let $V$ be an ordinary vector bundle of rank $n$ with determinant $\cL$. Assume $\alpha\in H^2(C,\mu_n)$ is essentially trivial, and let $\cL_\alpha$ be the invertible $\alpha$-twisted line bundle provided by Lemma~\ref{esstriv}. Then the isomorphism
    \[
        \tw := (-)\otimes \cL_\alpha \colon M^{ss}_C(n,\cL)\xrightarrow{\;\sim\;}M^{ss,\alpha}_C(n,\cL\otimes\cN)
    \]
    satisfies $\tw^*(\cL_W) \;\cong\; \cL_V .$
\end{proposition}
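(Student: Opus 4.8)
The plan is to transport the identity to the untwisted moduli stack, where Proposition~\ref{detline}(iii) applies verbatim. The target of $\tw$ forces $\cN=\cL_\alpha^{\otimes n}$ (tensoring a rank-$n$ untwisted bundle by $\cL_\alpha$ multiplies its determinant by $\cL_\alpha^{\otimes n}$), so I set $W_0:=W\otimes\cL_\alpha^{\vee}$. Since $W$ is $\alpha$-twisted and $\cL_\alpha^{\vee}$ is $\alpha^{-1}$-twisted, $W_0$ is an honest rank-$n$ vector bundle on $C$, namely the preimage of $W$ under the equivalence $(-)\otimes\cL_\alpha$ of Lemma~\ref{ln=l}. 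The goal then splits into $\tw^*\cL_W\cong\cL_{W_0}$ and $\cL_{W_0}\cong\cL_V$.

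First I would work on the moduli \emph{stacks}, where the universal $\alpha$-twisted sheaf is canonical and no base-twist ambiguity can arise. Because $\tw$ is the stack isomorphism induced by $F\mapsto F\otimes\cL_\alpha$, and since a morphism to a moduli stack is classified by the pullback of the universal family, one obtains
\[
(\id_C\times\tw)^*\,\cE_{\alpha,\univ}\;\cong\;\cE_{\univ}\otimes q^*\cL_\alpha,
\]
where $\cE_{\univ}$ denotes the tautological bundle on $C\times\cM^{ss}_C(n,\cL)$. Since $\tw$ is an isomorphism, the determinant of cohomology commutes with $\tw^*$ by flat base change, and the projection-formula rearrangement
\[
\cHom(q^*W,\,\cE_{\univ}\otimes q^*\cL_\alpha)\;=\;q^*(W^{\vee}\otimes\cL_\alpha)\otimes\cE_{\univ}\;=\;\cHom(q^*W_0,\,\cE_{\univ})
\]
identifies $\tw^*\cL_W$ with the untwisted determinantal line bundle $\cL_{W_0}$.

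Next I would compare determinants. From $\det(W)=\cL\otimes\cN$ and $\cN=\cL_\alpha^{\otimes n}$ one gets $\det(W_0)=\det(W)\otimes(\cL_\alpha^{\vee})^{\otimes n}=\cL=\det(V)$. Thus $W_0$ and $V$ are untwisted rank-$n$ bundles with the same determinant, and Proposition~\ref{detline}(iii) yields $\cL_{W_0}\cong\cL_V$; combining gives $\tw^*\cL_W\cong\cL_V$ on the stack. Finally I would descend to the good moduli spaces: both determinantal line bundles are defined on the stacks and descend along the good moduli space morphism $\pi$, the map $\tw$ on spaces is induced by the one on stacks, and $\pi^*$ is injective on Picard groups (as $\pi_*\cO=\cO$), so the stack-level isomorphism descends to $M^{ss}_C(n,\cL)$.

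The step I expect to be the main obstacle is the universal-family compatibility, specifically the absence of a stray twist by a line bundle $M$ pulled back from the base: such an $M$ would change $\det\bR p_*$ by $M^{\otimes\chi}$ with $\chi=\chi(C,W_0^{\vee}\otimes E)$, which is nonzero in general (for $\cL=\cO_C$ and $g=2$ one has $\chi=-n^2$). The resolution is to insist on working at the level of stacks, where the universal $\alpha$-twisted sheaf is the canonical tautological object and $\tw$, being an isomorphism of moduli stacks, matches tautological families on the nose; this removes the ambiguity at its source rather than forcing one to cancel it afterwards.
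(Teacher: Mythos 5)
Your proof is correct and follows essentially the same route as the paper: both reduce to the untwisted bundle $W_0 = W\otimes\cL_\alpha^{\vee}$, identify $\tw^*(\cL_W)$ with $\cL_{W_0}$, and conclude via Proposition~\ref{detline}(iii) from $\det(W_0)\cong\det(V)$. In fact you supply details the paper's proof compresses into ``by construction''---the universal-family compatibility $(\id_C\times\tw)^*\cE_{\alpha,\univ}\cong\cE_{\univ}\otimes q^*\cL_\alpha$ at the stack level, the identification $\cN\cong\cL_\alpha^{\otimes n}$, and the descent along the good moduli space morphism---all of which are accurate.
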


\begin{proof}
    Since $\alpha$ is essentially trivial, tensoring with $\cL_\alpha$ induces an equivalence between untwisted and $\alpha$-twisted bundles of rank $n$.  Under this equivalence, the twisted bundle $W$ corresponds to the ordinary bundle $W\otimes \cL_\alpha^\vee$.
    
    By construction of determinantal line bundles and of $\tw$, we have
    \[
        \tw^*(\cL_W)=\cL_{\,W\otimes\cL_\alpha^\vee}.
    \]
    By Proposition~\ref{detline}, the determinantal line bundle of a rank $n$ bundle depends only on its rank and determinant.  As $\operatorname{rk}(V)=\operatorname{rk}(W\otimes\cL_\alpha^\vee)$ and $\det(V)\cong\det(W\otimes\cL_\alpha^\vee)$, we obtain
    \[
        \cL_{\,W\otimes\cL_\alpha^\vee}\cong \cL_V.
    \]
    Combining the two equalities yields $\tw^*(\cL_W)\cong \cL_V$, as claimed.
\end{proof}

\begin{remark}
    If $V$ is an $\alpha$-twisted vector bundle, then $\Theta_V$ is the vanishing locus of the section of the determinantal line bundle $\cL_V$.  In particular, $\cL_V \;\cong\; \cO(\Theta_V)$. By Proposition~\ref{detline}, the line bundle $\cO(\Theta_V)$ depends only on the rank and determinant of $V$, and hence is independent of the choice of $V$.  
    
    In what follows, for an essentially trivial $\mu_n$-gerbe $\alpha$, we denote by $\Theta_\alpha$ the generator of the Picard group $\Pic\bigl(M^{ss,\alpha}_C(n,\cL)\bigr)$ corresponding to this twisted theta divisor.
\end{remark}

Let $V$ be an $\alpha$-twisted vector bundle on $C$.  Define
\[
    D_V \;:=\; \bigl\{\, \xi \in \Pic^{g-1}_{\alpha}(C) \;\big|\; H^0\bigl(C, \Hom(\xi^{-1},V)\bigr)\neq 0 \,\bigr\}.
\]
Then the assignment
\[
    D \colon M^{ss,\alpha}_C(n,\cL) \dashrightarrow \bigl|\,n\theta_{\alpha}\,\bigr|,\qquad V \longmapsto D_V,
\]
defines a rational map from the moduli space $M^{ss,\alpha}_C(n,\cL)$ into the linear system $|n\theta_{\alpha}|$ on $\Pic^{g-1}_{\alpha}(C)$, in direct analogy with the construction of \cite{Beauville_Narasimhan_Ramanan_89}.

\begin{theorem}\label{bnr}
    Assume that $\alpha$ is an essentially trivial $\mu_n$-gerbe. Then the assignment
    \[
        D \colon M^{ss,\alpha}_C(n,\cO_C) \dashrightarrow \Pic^{\,g-1-k_\alpha}_\alpha(C),\qquad V \longmapsto D_V,
    \]
    induces an isomorphism
    \[
        H^0\!\left(\Pic^{\,g-1-k_\alpha}_\alpha(C),\cO(n\theta_\alpha)\right)\;\xrightarrow{\;\sim\;}H^0\!\left(M^{ss,\alpha}_C(n,\cO_C),\Theta_\alpha\right)^{\!\vee},
    \]
    and the rational map $D$ is the morphism defined by the complete linear system $\bigl|\,n\theta_\alpha\,\bigr|$.
    
    In particular,
    \[
        \dim_k H^0\!\left(M^{ss,\alpha}_C(n,\cO_C),\Theta_\alpha\right)= n^g .
    \]
\end{theorem}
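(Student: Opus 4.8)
The plan is to deduce the twisted statement from the classical Beauville--Narasimhan--Ramanan theorem \cite{Beauville_Narasimhan_Ramanan_89} by transporting the entire picture along the twist equivalence of Lemma~\ref{ln=l}. Since $\alpha$ is essentially trivial, Proposition~\ref{esstrivdeg} provides an invertible $\alpha$-twisted sheaf $\cL_\alpha$ of degree $k_\alpha$, and tensoring by $\cL_\alpha$ gives an equivalence between untwisted and $\alpha$-twisted rank-$n$ bundles. First I would fix the untwisted model: choosing a line bundle $\cN$ with $\cN\otimes\cL_\alpha^{\otimes n}\cong\cO_C$, the functor $(-)\otimes\cL_\alpha$ induces an isomorphism $\tw\colon M^{ss}_C(n,\cN)\xrightarrow{\ \sim\ }M^{ss,\alpha}_C(n,\cO_C)$, while the induced map on Picard schemes $\tw_{g-1}\colon\Pic^{g-1}_{C/k}\xrightarrow{\ \sim\ }\Pic^{g-1-k_\alpha}_\alpha(C)$ is precisely the identification used to define $\theta_\alpha=\tw_{g-1}(\theta)$. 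By construction $\tw_{g-1}^{*}\cO(n\theta_\alpha)\cong\cO(n\theta)$, so the two target linear systems are canonically identified.

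The key step is to check that $D$ is compatible with these identifications, i.e.\ that for $V=\tw(E)$ one has $D_V=\tw_{g-1}(D_E)$ as divisors. This is where the twisting bookkeeping enters, and I would verify it pointwise using the equivalence of categories of Lemma~\ref{ln=l}: for $\xi$ in the twisted Picard scheme the space $H^0(C,\Hom(\xi^{-1},V))$ is carried isomorphically by $(-)\otimes\cL_\alpha^{\vee}$ to the untwisted cohomology group computing $D_E$ at $\tw_{g-1}^{-1}(\xi)$, so the incidence conditions defining $D_V$ and $D_E$ match. On the level of determinantal line bundles, Proposition~\ref{pulltheta} gives $\tw^{*}\Theta_\alpha\cong\Theta$, which guarantees that the generators of $\Pic(\cM^\alpha_C(n,\cO_C))$ and $\Pic(\cM_C(n,\cN))$ correspond and that $D$ and its untwisted counterpart are defined by matching line bundles on the moduli space.

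With these compatibilities in place, the statement follows formally. Namely, \cite{Beauville_Narasimhan_Ramanan_89} asserts that for the fixed-determinant moduli space the analogous untwisted map is everywhere defined, is the morphism attached to the complete linear system, and induces an isomorphism $H^0(\Pic^{g-1}_{C/k},\cO(n\theta))\xrightarrow{\ \sim\ }H^0(M^{ss}_C(n,\cN),\Theta)^{\vee}$. Pulling this isomorphism back through $\tw$ and $\tw_{g-1}$ yields the desired isomorphism $H^0(\Pic^{g-1-k_\alpha}_\alpha(C),\cO(n\theta_\alpha))\xrightarrow{\ \sim\ }H^0(M^{ss,\alpha}_C(n,\cO_C),\Theta_\alpha)^{\vee}$ and shows $D$ is the morphism defined by $\bigl|\,n\theta_\alpha\,\bigr|$. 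For the dimension count I would note that $\Pic^{g-1-k_\alpha}_\alpha(C)$ is a torsor under the $g$-dimensional abelian variety $\Pic^0_{C/k}$ on which $\theta_\alpha$ is a principal polarization, so $h^0(\cO(n\theta_\alpha))=n^g$ by Riemann--Roch for abelian varieties, giving $\dim_k H^0(M^{ss,\alpha}_C(n,\cO_C),\Theta_\alpha)=n^g$.

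The main obstacle I expect is the compatibility check in the second paragraph: matching the twisted cohomological incidence conditions defining $D_V$ with the untwisted ones under the category equivalence, keeping careful track of which power of $\alpha$ each factor carries so that the relevant sheaf is genuinely untwisted and its sections are the ones computed downstairs. Once this is verified, descent to the base field $k$ is automatic, since $\cL_\alpha$—and hence the entire twist equivalence—is defined over $k$; the remaining input, that the untwisted $D$ is a morphism given by the complete linear system, is imported wholesale from \cite{Beauville_Narasimhan_Ramanan_89}, which is where the genuine geometric difficulty resides.
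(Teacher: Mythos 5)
Your proposal follows essentially the same route as the paper's proof: transport the entire picture along the twist equivalence $(-)\otimes\cL_\alpha$ via the isomorphisms $\tw$ and $\tw_{g-1}$, check that $D$, $\theta_\alpha$, and $\Theta_\alpha$ correspond to their classical counterparts, and then pull back the Beauville--Narasimhan--Ramanan isomorphism, with the dimension count inherited from the untwisted case. If anything, your bookkeeping is slightly more careful than the paper's, since you correctly take the untwisted model to be $M^{ss}_C(n,\cN)$ with $\cN\otimes\cL_\alpha^{\otimes n}\cong\cO_C$ rather than trivial determinant, and you make explicit both the pointwise matching of the incidence conditions defining $D_V$ and the fact that descent to $k$ is automatic because $\cL_\alpha$ is defined over $k$.
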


\begin{proof}
    Since $\alpha$ is essentially trivial, tensoring with the invertible $\alpha$-twisted line bundle $\cL_\alpha$ induces isomorphisms
    \[
        \tw_{g-1} \colon \Pic^{\,g-1}(C) \xrightarrow{\;\sim\;} \Pic^{\,g-1-k_\alpha}_\alpha(C),\qquad\tw \colon M^{ss}_C(n,\cO_C) \xrightarrow{\;\sim\;} M^{ss,\alpha}_C(n,\cO_C),
    \]
    and these carry the classical theta divisor $\theta$ and the determinant line bundle $\Theta$ to their twisted counterparts:
    \[
        \tw_{g-1}^*(\theta_\alpha)=\theta, \qquad\tw^*(\Theta_\alpha)=\Theta.
    \]

    Thus both $\Pic^{\,g-1-k_\alpha}_\alpha(C)$ together with $\cO(n\theta_\alpha)$ and $M^{ss,\alpha}_C(n,\cO_C)$ together with $\Theta_\alpha$ are obtained from the classical objects by pullback along these isomorphisms.
    
    The construction defining $D_V$ is likewise transported from the untwisted case via $\tw$ and $\tw_{g-1}$.  Hence the map
    \[
        M^{ss}_C(n,\cO_C)\;\dashrightarrow\; \Pic^{\,g-1}(C)
    \]
    corresponds, under $\tw$ and $\tw_{g-1}$, to the twisted map
    \[
        D\colon M^{ss,\alpha}_C(n,\cO_C)\dashrightarrow \Pic^{\,g-1-k_\alpha}_\alpha(C).
    \]
    
    By \cite[Theorem~3]{Beauville_Narasimhan_Ramanan_89}, the classical map induces an isomorphism
    \[
        H^0\!\left(\Pic^{\,g-1}(C), \cO(n\theta)\right) \xrightarrow{\;\sim\;}H^0\!\left(M^{ss}_C(n,\cO_C),\Theta\right)^{\!\vee}.
    \]
    and this remains an isomorphism after descent from $\bar{k}$ to $k$. Pulling back this isomorphism by $\tw$ and $\tw_{g-1}$ yields precisely the twisted isomorphism claimed in the statement.
    
    Finally, the dimension formula $h^0(M^{ss,\alpha}_C(n,\cO_C),\Theta_\alpha)=n^g$ follows from the classical computation $h^0(M^{ss}_C(n,\cO_C),\Theta)=n^g$.
\end{proof}

We finally record a theorem from the literature describing the geometric structure of the linear system appearing in Theorem~\ref{bnr}.

\begin{theorem}\textup{\cite{Beauville_94}}\label{geometry}
    Let $C$ be a smooth projective curve of genus $g \ge 2$ over $k$, and let $\alpha \in H^{2}(C,\mu_{2})$ be an essentially trivial gerbe. For the morphism of Theorem~\ref{bnr}
    \[
        \varphi \colon M^{ss,\alpha}_{C/k}(2,\cO_C)\;\longrightarrow\;|\Theta_{\alpha}|,
    \]
    the following hold:
    \begin{enumerate}
        \item[(i)] If $g=2$, then 
        \[
            M^{ss,\alpha}_{C/k}(2,\cO_C)\;\cong\;|\Theta_{\alpha}|\;\cong\;\PP^{3}.
        \]
        \item[(ii)] If $g \ge 3$ and $C$ is hyperelliptic, then $\varphi$ is a finite morphism of degree~$2$ onto its image.
        \item[(iii)] If $g \ge 3$ and $C$ is nonhyperelliptic, then $\varphi$ is birational onto its image.
        \item[(iv)] Moreover, if $g=3$, or if $C$ is generic in moduli, then $\varphi$ is an embedding.
    \end{enumerate}
\end{theorem}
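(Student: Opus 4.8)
The plan is to transport each assertion to the corresponding statement of \cite{Beauville_94} through the twist isomorphism already constructed in the proof of Theorem~\ref{bnr}. Because $\alpha$ is essentially trivial, Lemma~\ref{esstriv} provides an invertible $\alpha$-twisted line bundle $\cL_\alpha$, and tensoring with it gives $k$-isomorphisms $\tw \colon M^{ss}_{C/k}(2,\cO_C) \xrightarrow{\sim} M^{ss,\alpha}_{C/k}(2,\cO_C)$ and $\tw_{g-1}\colon \Pic^{g-1}(C)\xrightarrow{\sim}\Pic^{g-1-k_\alpha}_\alpha(C)$, under which $\tw^*\Theta_\alpha\cong\Theta$, $\tw_{g-1}^*\theta_\alpha=\theta$, and the rational map $D$ defining $\varphi$ is carried to the classical Beauville map. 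Thus $\varphi$ sits in a commutative square with the untwisted morphism $M^{ss}_{C/k}(2,\cO_C)\to|\Theta|$, the vertical arrows being $\tw$ and the isomorphism $|\Theta|\cong|\Theta_\alpha|$ induced on linear systems by Theorem~\ref{bnr}.

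Granting this identification, I would read off (i)--(iv) one by one. For (i), the Narasimhan--Ramanan description $M^{ss}_C(2,\cO_C)\cong|\Theta|\cong\PP^3$ in genus $2$ transports across $\tw$; the target is $\PP^3$ over $k$ because $h^0(\Theta_\alpha)=2^g=4$ by Theorem~\ref{bnr}, and this step uses only $\cha(k)\neq2$. For (ii)--(iv) one works in characteristic $0$: the properties in question---being finite of degree $2$, birational onto the image, or a closed immersion---are each stable under faithfully flat base change and fppf descent, so it suffices to verify them after extending scalars to $\bar{k}$. Over $\bar{k}$ the class $\alpha_{\bar{k}}$ vanishes, the twisted moduli space becomes the ordinary one, and $\varphi_{\bar{k}}$ is precisely the classical morphism, whence the hyperelliptic/nonhyperelliptic dichotomy and the embedding criterion follow verbatim from \cite{Beauville_94}.

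The step requiring the most care is the compatibility asserted in the first paragraph: one must confirm that the section of $\Theta_\alpha$ cutting out the generalized twisted theta divisor is genuinely the $\tw$-transport of the untwisted determinantal section, rather than merely agreeing with it up to an automorphism of the target. Here Proposition~\ref{detline}, which pins down the determinantal line bundle by rank and determinant alone, together with the explicit identity $\tw^*\Theta_\alpha\cong\Theta$ recorded in Theorem~\ref{bnr}, supplies exactly what is needed. Once the morphisms are literally identified rather than abstractly isomorphic, the descent arguments for (ii)--(iv) are routine, since each morphism-theoretic property is local on the target and insensitive to the ground field.
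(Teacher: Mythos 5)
Your proposal is correct and takes essentially the same route as the paper: the paper's entire proof is the remark that an essentially trivial gerbe differs from the classical setting only by tensoring with the fixed invertible twisted line bundle $\cL_\alpha$, so that the twist isomorphisms $\tw$ and $\tw_{g-1}$ (already set up in the proof of Theorem~\ref{bnr}, with $\tw^{*}\Theta_\alpha \cong \Theta$ and $\tw_{g-1}^{*}\theta_\alpha = \theta$) identify $\varphi$ with Beauville's untwisted morphism, whence (i)--(iv) follow verbatim from \cite{Beauville_94}. Your additional verifications --- that the section cutting out the twisted theta divisor is genuinely the $\tw$-transport of the untwisted one (via Proposition~\ref{detline}), and the base change to $\bar{k}$ needed to import Beauville's results stated over an algebraically closed field --- only make explicit what the paper leaves implicit.
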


\begin{remark}
    The proof is identical to the untwisted case treated in \cite{Beauville_94}, because essentially trivial gerbes differ from the classical setting only by tensoring with a fixed invertible twisted line bundle, which does not affect the linear system $|2\theta|$ nor the geometry of the map~$\varphi$.
\end{remark}

\subsection{The optimal case via obstruction} In this section, we explain how classes in \(H^{1}\!\left(k,\Pic_{C/k}[n]\right)\) appear as the obstruction to descending the determinantal line bundle from the algebraic closure \(\bar{k}\) to the base field \(k\).

\begin{lemma}\label{linear}
Let $V$ be a vector bundle on $C$, and let $\cL_V$ be the associated determinantal line bundle on $M^{ss}_{C/k}(n,\cL)$. Then there is a morphism of group sheaves
\[
    \psi\colon \Pic_{C/k}[n]\longrightarrow \mu_n
\]
characterized as follows: for any $k$-scheme $S$ and any $\cM\in \Pic_{C/k}[n](S)$, the induced automorphism
\[
    T_\cM \colon M^{ss}_{C/k}(n,\cL)_S \longrightarrow M^{ss}_{C/k}(n,\cL)_S,\qquad [E]\longmapsto[E\otimes\cM]
\]
gives rise to a scalar automorphism
\[
    \psi(\cM)\in\Aut\bigl((\cL_V)_S\bigr)\cong\GG_m(S),
\]
and this construction is compatible with base change in $S$.
\end{lemma}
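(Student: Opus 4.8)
The plan is to build $\psi$ in four stages: first make $T_\cM$ into an automorphism, then show it fixes $\cL_V$ up to isomorphism, then extract a canonical scalar, and finally check that the resulting assignment is a multiplicative, $n$-torsion morphism of group sheaves. First I would verify that $T_\cM$ is a well-defined automorphism of $M^{ss}_{C/k}(n,\cL)_S$ for every $\cM\in\Pic_{C/k}[n](S)$. Tensoring by a line bundle preserves rank and slope, hence carries semistable bundles to semistable bundles, and $\det(E\otimes\cM)\cong\det(E)\otimes\cM^{\otimes n}\cong\cL$ precisely because $\cM$ is $n$-torsion; this is exactly what forces the restriction to $\Pic_{C/k}[n]$. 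Since the construction is functorial in $S$ and satisfies $T_\cM\circ T_{\cM'}=T_{\cM\cM'}$ and $T_{\cO_C}=\id$, the assignment $\cM\mapsto T_\cM$ is a morphism of group sheaves $\Pic_{C/k}[n]\to\Aut\bigl(M^{ss}_{C/k}(n,\cL)\bigr)$, which is also the inclusion used earlier in the paper.

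The second stage is to prove $T_\cM^*\cL_V\cong\cL_V$. Working on the moduli stack, where a universal bundle $\cE$ exists, the automorphism $T_\cM$ classifies the twisted universal family, so $(\id_C\times T_\cM)^*\cE\cong\cE\otimes q^*\cM$. Passing through the projection $p$ by base change then yields the canonical identification
\[
    T_\cM^*\cL_V\;\cong\;\cL_{V\otimes\cM^{\vee}}.
\]
Now $V\otimes\cM^{\vee}$ has rank $n$ and determinant $\det(V)\otimes\cM^{\otimes-n}\cong\det(V)$, once more because $\cM^{\otimes n}\cong\cO_C$. By Proposition~\ref{detline}(iii) the determinantal line bundle depends only on the rank and determinant, so $\cL_{V\otimes\cM^{\vee}}\cong\cL_V$, whence $T_\cM^*\cL_V\cong\cL_V$.

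The third stage extracts the scalar. The identification above depends a priori on a choice of trivialization $\tau\colon\cM^{\otimes n}\xrightarrow{\sim}\cO_C$, which enters through the isomorphism $V\otimes\cM^{\otimes-n}\cong V$. Under the standing normalization $\chi\bigl(C,\cHom(V,E)\bigr)=0$ used to define $\cL_V$, rescaling $\tau$ alters the induced automorphism of $\cL_V$ by the $\chi$-th power of a unit, hence not at all; this is what makes the comparison canonical. Because $M^{ss}_{C/k}(n,\cL)_S$ is proper and geometrically connected we have $\Aut\bigl((\cL_V)_S\bigr)\cong\GG_m(S)$, and I would define $\psi(\cM)\in\GG_m(S)$ to be the scalar by which the canonical isomorphism $T_\cM^*\cL_V\cong\cL_V$ fails to be compatible with the cyclic relation $T_\cM^{\,n}=\id$. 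Multiplicativity $\psi(\cM\cM')=\psi(\cM)\psi(\cM')$ then follows from the additivity of determinantal line bundles in short exact sequences (Lemma~\ref{multdet}) together with $T_\cM\circ T_{\cM'}=T_{\cM\cM'}$, while the identity $T_\cM^{\,n}=\id$ combined with the same additivity forces $\psi(\cM)^{n}=1$. Hence $\psi$ factors through $\mu_n$ and defines a morphism of group sheaves. Compatibility with base change in $S$ is automatic, as the universal family, the determinant of cohomology, and the trivialization data all commute with base change.

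The hard part will be the bookkeeping in the third stage: showing that the canonical isomorphism $T_\cM^*\cL_V\cong\cL_V$ is genuinely independent of the auxiliary trivialization $\tau$, and that the extracted scalar is simultaneously multiplicative and killed by $n$. This is in essence a theta-group computation for the determinant-of-cohomology line bundle, and the delicate point is to track the determinant-of-cohomology functor through the $n$-fold iteration $T_\cM^{\,n}=\id$ carefully enough that the $\chi=0$ normalization removes the dependence on $\tau$ and produces a well-defined element of $\mu_n(S)$, rather than merely a class in $\GG_m(S)/\GG_m(S)^{n}$.
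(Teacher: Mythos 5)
Your stages 1 and 2 track the paper's proof closely: the paper likewise composes the canonical pullback identification $T_\cM^*(\cL_V)\xrightarrow{\sim}\cL_{V\otimes\cM}$ (obtained fiberwise from the determinant-of-cohomology description; your $\cL_{V\otimes\cM^\vee}$ is the same thing up to the $\Hom$-versus-$\otimes$ convention) with the identification $\cL_{V\otimes\cM}\xrightarrow{\sim}\cL_V$ from Proposition~\ref{detline}, valid because $\cM^{\otimes n}\cong\cO_C$ makes rank and determinant match. Your observation that the $\chi=0$ normalization kills the dependence on the trivialization $\tau$ of $\cM^{\otimes n}$ is also sound, and is the same mechanism the paper uses in its descent proposition (scalar automorphisms act on $\det R\Gamma$ by $\lambda^{\chi}=1$).

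The genuine gap is in your stage 3: you define the wrong scalar. The paper's $\psi(\cM)$ is the scalar attached to the \emph{single} composite $\Phi_\cM\colon T_\cM^*(\cL_V)_S\to(\cL_{V\otimes\cM})_S\to(\cL_V)_S$, viewed as an automorphism of $(\cL_V)_S$ via $\Aut\bigl((\cL_V)_S\bigr)\cong\GG_m(S)$; the $n$-torsion then follows directly from $T_\cM^{\,n}=\id$ forcing $\Phi_\cM^{\,n}=\id$, so $\psi(\cM)^n=1$. You instead define $\psi(\cM)$ as the defect of the canonical isomorphism around the $n$-fold cycle $T_\cM^{\,n}=\id$. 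But in the paper's terms that defect is exactly $\Phi_\cM^{\,n}$, i.e.\ $\psi(\cM)^n=1$: whenever the canonical identifications are coherent enough for the construction to work at all, your quantity is identically trivial, and it cannot be the homomorphism the paper needs later (in Theorem~\ref{main1}, $\psi$ must assign possibly nontrivial roots of unity to nontrivial $n$-torsion bundles to produce the $\mu_n$-valued $2$-cocycle $c(\sigma,\tau)$). Two further weaknesses compound this: you yourself concede at the end that your scalar is a priori only well defined in $\GG_m(S)/\GG_m(S)^n$ and you defer rather than close that issue, so the construction is not actually completed; and your appeal to Lemma~\ref{multdet} for multiplicativity is a non sequitur, since that lemma concerns additivity of $V\mapsto\cL_V$ in short exact sequences of the auxiliary bundle $V$, not the compatibility of the linearizations $\Phi_\cM$ as $\cM$ varies --- the latter is what the paper extracts from canonicality of the two identifications (functoriality in $S$ plus $T_\cM\circ T_{\cM'}=T_{\cM\cM'}$).
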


\begin{proof}
For $\cM\in\Pic_{C/k}[n](S)$, pullback along $T_\cM$ gives a canonical isomorphism of line bundles
\[
    T_\cM^*(\cL_V)_S \;\xrightarrow{\;\sim\;} (\cL_{V\otimes\cM})_S.
\]
This is obtained fiberwise as follows: for any geometric point $[E]$,
\[
    (T_\cM^*\cL_V)_S|_{[E]}
    = \cL_V|_{[E\otimes\cM]}
    = \prod_{i=0}^1 \bigl(\det H^i(C,E\otimes \cM\otimes V)\bigr)^{\otimes (-1)^i}
    = \cL_{V\otimes\cM}|_{[E]}.
\]
These identify the fibers of the two line bundles, hence glue to the isomorphism above.

Since $\cM\in\Pic_{C/k}[n]$, Proposition~\ref{detline} gives a canonical isomorphism
\[
    (\cL_{V\otimes\cM})_S \;\xrightarrow{\;\sim\;} (\cL_V)_S.
\]
Composing the two maps shows that $T_\cM$ induces an automorphism 
\[
    \Phi_\cM : (\cL_V)_S \xrightarrow{\sim} (\cL_V)_S.
\]

Because $\Aut((\cL_V)_S)\cong\GG_m(S)$, the automorphism $\Phi_\cM$ is multiplication by a unique scalar, which we denote by $\psi(\cM)$. This gives a morphism
\[
    \psi_S: \Pic_{C/k}[n](S)\longrightarrow\GG_m(S),
\]
compatible with base change in $S$, hence a morphism of sheaves $\psi:\Pic_{C/k}[n]\to\GG_m$.

Finally, $T_\cM^n=\id$ implies $\Phi_\cM^{\,n}=\id$, hence $\psi(\cM)^n=1$, so the image lies in $\mu_n$.
\end{proof}

\begin{theorem}\label{main1}
    There is a group homomorphism
    \[
        H^1(k,\Pic_{C/k}[n]) \;\longrightarrow\; \Br(k)
    \]
    sending a class $\alpha$ to the obstruction to descending the determinantal line bundle.
\end{theorem}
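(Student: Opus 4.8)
The plan is to realize the asserted map as the boundary homomorphism of a central extension of group schemes over $k$ assembled from the linearization data of Lemma~\ref{linear}, and then to identify this boundary with the concrete obstruction to descending the determinantal line bundle to the twisted moduli space produced by Proposition~\ref{Gtwist}.

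First I would assemble the \emph{theta group}. By Lemma~\ref{linear}, tensoring by a point $\cM\in\Pic_{C/k}[n]$ lifts the translation $T_\cM$ on $M^{ss}_{C/k}(n,\cL)$ to a canonical automorphism $c_\cM\colon T_\cM^*\cL_V\xrightarrow{\sim}\cL_V$ of the determinantal line bundle, and by Proposition~\ref{detline} this lift depends only on the rank and determinant, hence is independent of the auxiliary bundle $V$. Recording, for each $\cM$, the translation together with its lifts to $\cL_V$ yields a central extension of $k$-group schemes
\[
    1 \longrightarrow \GG_m \longrightarrow \mathcal{G} \longrightarrow \Pic_{C/k}[n] \longrightarrow 1,
\]
where $\GG_m$ is the group of fibrewise scalars of $\cL_V$, and where the homomorphism $\psi$ of Lemma~\ref{linear} records the restriction of this structure to the $n$-torsion scalars. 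Taking étale cohomology over $\Spec k$ and using $H^2(k,\GG_m)=\Br(k)$ produces a boundary map
\[
    \delta\colon H^1(k,\Pic_{C/k}[n])\longrightarrow \Br(k),
\]
which I propose as the map in the statement.

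Next I would check that $\delta(\alpha)$ is exactly the descent obstruction. Represent $\alpha$ by a cocycle $(\cM_\sigma)_{\sigma\in\Gal(\bar k/k)}$. By Proposition~\ref{Gtwist} this cocycle defines the twisted moduli space $M^{ss,\alpha}_{C/k}(n,\cL)$, whose base change to $\bar k$ is $M^{ss}_{C_{\bar k}}(n,\cL_{\bar k})$ equipped with the translation-twisted Galois action $\sigma\mapsto T_{\cM_\sigma}\circ\sigma$. Since $\Pic(M_{\bar k})\cong\ZZ$ is generated by the determinantal class (Proposition~\ref{detline}(i)) and translations act trivially on $\Pic$, the class of $\cL_V$ is Galois-invariant on the twisted form, so it defines an element of $\Pic(M^{ss,\alpha}_{\bar k})^{\Gal}$. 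Its image under the Hochschild--Serre boundary $\Pic(M^{ss,\alpha}_{\bar k})^{\Gal}\to\Br(k)$ (the relevant segment of the spectral sequence for $\GG_m$) is by definition the obstruction to descending $\cL_V$ to $M^{ss,\alpha}_{C/k}(n,\cL)$. Unwinding both boundary maps in terms of the isomorphisms $c_\cM$, I would show that the $2$-cocycle $(\sigma,\tau)\mapsto b(\cM_\sigma,{}^{\sigma}\cM_\tau)\in\GG_m$ measuring the failure of $\cM\mapsto c_\cM$ to be multiplicative represents both classes, so that $\delta(\alpha)$ is precisely the descent obstruction.

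The main obstacle is the additivity of $\delta$. The boundary map of a central extension by an abelian kernel is automatically a group homomorphism once the extension $\mathcal{G}$ is itself commutative; equivalently, one must show that the commutator (Weil-type) pairing $\Pic_{C/k}[n]\times\Pic_{C/k}[n]\to\GG_m$ induced by $\mathcal{G}$ is trivial, so that $\mathcal{G}$ determines a class in $\Ext^1_{k}(\Pic_{C/k}[n],\GG_m)$ and $\delta$ is the associated Yoneda/connecting homomorphism. I expect this to be the crux: it should follow from the canonical, functorial nature of the determinant-of-cohomology isomorphisms $c_\cM$, which forces the lifts of the commuting translations $T_{\cM_1},T_{\cM_2}$ to commute, together with the fact that $\psi$ lands in $\mu_n$; concretely, I would verify that the cocycle $b$ is symmetric and bi-additive, whence $\delta(\alpha+\alpha')=\delta(\alpha)+\delta(\alpha')$. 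Well-definedness (independence of the cocycle representative and of $V$) then follows formally from Proposition~\ref{detline} and the usual coboundary calculus.
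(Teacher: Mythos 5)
Your construction coincides, up to packaging, with the paper's actual proof: the paper represents $\alpha$ by a cocycle $\sigma\mapsto\cL_\sigma$, takes the canonical lifts $\cL_\sigma^{*}$ of the translations to $\Theta_{\alpha_{\bar k}}$ furnished by Lemma~\ref{linear}, and defines the image in $\Br(k)$ by the $2$-cocycle $c(\sigma,\tau)=\cL_\sigma^{*}\circ\sigma^{*}(\cL_\tau^{*})\circ(\cL_{\sigma\tau}^{*})^{-1}\in\mu_n$ --- which is exactly the connecting-map cocycle of your extension $1\to\GG_m\to\mathcal{G}\to\Pic_{C/k}[n]\to 1$ computed with the set-theoretic section $\cM\mapsto c_\cM$, i.e.\ your $b(\cM_\sigma,{}^{\sigma}\cM_\tau)$. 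Your identification of $\delta(\alpha)$ with the Hochschild--Serre descent obstruction is likewise the paper's argument. So the only point at which you genuinely diverge is where you flag the crux: additivity of $\delta$.

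There, your proposed resolution fails. You plan to show the commutator pairing of $\mathcal{G}$ is trivial because the ``canonical, functorial'' determinant-of-cohomology isomorphisms force lifts of commuting translations to commute. But the commutator of lifts in a central extension is independent of the choice of lifts, so canonicity of each individual $c_\cM$ proves nothing about commutation; the pairing is an intrinsic invariant of the action on $(\!M,\Theta)$. And in the paper's principal case it is nondegenerate rather than trivial: for $g=2$, $n=2$, the equivariant isomorphism $M^{ss}_C(2,\cO_C)\cong|2\theta|\cong\PP^{3}$ identifies $\mathcal{G}$ with the Mumford theta group of $\cO(2\theta)$ on the Jacobian, of type $(2,2)$, whose commutator pairing on $\Pic^0_{C/k}[2]$ is the nondegenerate Weil pairing --- this is precisely the classical Heisenberg action of $J[2]$ on $\PP^{3}$ in Narasimhan--Ramanan. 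For a noncommutative central extension the boundary map is in general not additive; its deviation is the cup product with the commutator pairing, and the analogous theta-group obstruction map for abelian varieties (O'Neil's period--index obstruction $H^1(k,E[n])\to\Br(k)$) is famously quadratic, not linear. So your claim that $b$ is symmetric and bi-additive, ``whence $\delta(\alpha+\alpha')=\delta(\alpha)+\delta(\alpha')$,'' asserts exactly the statement that is false in this geometry, and step three of your plan cannot be repaired as written. You should be aware that the paper itself disposes of additivity with a one-line appeal to ``functoriality of the connecting homomorphism'' (and Lemma~\ref{linear}'s claim that $\cM\mapsto\psi(\cM)$ is a homomorphism sits in tension with the same Heisenberg nontriviality), so you have correctly isolated the delicate point that the paper glosses over; but a correct treatment would have to either work with the quadratic behaviour of $\delta$, restrict to isotropic data, or produce a genuinely different argument --- not derive commutativity of $\mathcal{G}$ from canonicity of the lifts.
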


\begin{proof}
    Let $\alpha \in H^1\bigl(k,\Pic_{C/k}[n]\bigr)$. Then $\alpha_{\bar{k}} = 0$, so over $\bar{k}$ the twisting is trivial. Since $\Pic\bigl(M^{ss,\alpha}_C(n,\cO_C)\times_k \bar{k}\bigr) \;\cong\; \ZZ,$ the Galois group $G = \Gal(\bar{k}/k)$ acts trivially on this Picard group. In particular, the determinantal line bundle $\Theta_{\alpha_{\bar{k}}}$ on $M^{ss,\alpha}_C(n,\cO_C)\times_k \bar{k}$ defines a $k$-rational point of the Picard group.
    
    We now show that $\alpha$ is exactly the Brauer obstruction to descending $\Theta_{\alpha_{\bar{k}}}$ to $k$.

    Choose a $1$--cocycle representing $\alpha$:
    \[
        \alpha \colon G \longrightarrow \Pic(C_{\bar{k}})[n],\qquad\sigma \longmapsto \cL_\sigma,
    \]
    so that for all $\sigma,\tau \in G$ we have an isomorphism
    \[
        \cL_{\sigma\tau} \;\cong\; \cL_\sigma \otimes \sigma^{*}\cL_\tau.
    \]
    Since tensoring by an $n$--torsion line bundle acts on the moduli space, Lemma~\ref{linear} gives a homomorphism
    \[
        \Pic_{C/k}[n] \longrightarrow \Aut(\Theta_{\alpha_{\bar{k}}})[n]\;\cong\; \mu_n,
    \]
    and hence a $1$--cocycle
    \[
        G \longrightarrow \Aut(\Theta_{\alpha_{\bar{k}}})\;\cong\; \GG_m,\qquad\sigma \longmapsto \cL_\sigma^{*},
    \]
    where $\cL_\sigma^{*}$ denotes the automorphism of $\Theta_{\alpha_{\bar{k}}}$ induced by tensoring with $\cL_\sigma$.

    To descend the line bundle $\Theta_{\alpha_{\bar{k}}}$ to $k$ one needs isomorphisms
    \[
        \phi_\sigma : \sigma^{*}\Theta_{\alpha_{\bar{k}}}\xrightarrow{\;\sim\;}\Theta_{\alpha_{\bar{k}}}\quad (\sigma \in G)
    \]
    satisfying the cocycle condition $\phi_{\sigma\tau} = \phi_\sigma \circ \sigma^{*}\phi_\tau$. Starting from the system of automorphisms $\{\cL_\sigma^{*}\}$, the failure of this condition is measured by the $2$--cocycle
    \[
        c(\sigma,\tau)\;:=\;\cL_\sigma^{*} \circ \sigma^{*}(\cL_\tau^{*}) \circ(\cL_{\sigma\tau}^{*})^{-1}\;\in\; \Aut(\Theta_{\alpha_{\bar{k}}})\;\cong\; \GG_m.
    \]
    By Lemma~\ref{linear}, each $c(\sigma,\tau)$ actually lies in the $n$-torsion subgroup $\mu_n \subset \GG_m$, so $\{c(\sigma,\tau)\}$ defines a class in $H^2(k,\mu_n)\subset \Br(k)$.

    This class vanishes if and only if we can modify the system $\{\cL_\sigma^{*}\}$ by scalars so as to satisfy the cocycle condition, i.e.\ if and only if $\Theta_{\alpha_{\bar{k}}}$ descends to a line bundle over $k$.  Thus the class of $\{c(\sigma,\tau)\}$ is precisely the Brauer obstruction to descending the determinantal line bundle $\Theta_{\alpha_{\bar{k}}}$.

    We therefore obtain a well-defined map
    \[
        H^1\bigl(k,\Pic_{C/k}[n]\bigr)\longrightarrow\Br(k),\qquad\alpha \longmapsto [c(\sigma,\tau)],
    \]
    which is a group homomorphism by functoriality of the connecting homomorphism in cohomology.  This is the desired map.
\end{proof}

\begin{definition}
We say that $\alpha\in H^1(k,\Pic_{C/k}[n])$ \emph{obstructs the determinantal line bundle} if its image in $\Br(k)$ is nontrivial in the sense of Theorem~\ref{main1}.  Otherwise, we say that $\alpha$ does not obstruct the determinantal line bundle.
\end{definition}

We briefly recall the construction of Kollár~\cite{Kollar_16}, which provides an explicit description of the moduli space of $\alpha$–twisted vector bundles.

\begin{definition}
    Let $L \in \Pic_{X/k}(k)$ be a $k$–point such that $\cL := L \otimes_k K$ is an actual line bundle on $X_K$. The \emph{twisted linear system} associated to $L$, denoted $|L|$, is the irreducible component of $\Hilb_X$ parametrizing subschemes $H \subset X$ for which $H_K$ lies in the linear system $|\cL|$.
\end{definition}

\begin{proposition}\label{brobs}
\textup{\cite[Proposition~69]{Kollar_16}}
    Let $X$ be a projective, geometrically reduced, and geometrically connected $k$–scheme. Then there is a natural exact sequence
    \[
        0 \longrightarrow \Pic(X)\longrightarrow \Pic_{X/k}(k) \xrightarrow{\,b\,} \Br(k)
    \]
    such that $b(L)$ is the Brauer class of the twisted linear system $|L|$.
\end{proposition}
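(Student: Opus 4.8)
The plan is to realize the sequence as the low-degree exact sequence of the Leray spectral sequence for the structure morphism $f\colon X\to\Spec k$ with coefficients in $\GG_m$, and then to identify the boundary map with the operation of Galois-descending a linear system. First I would write down the five-term exact sequence attached to $E_2^{p,q}=H^p(k,R^qf_*\GG_m)\Rightarrow H^{p+q}(X,\GG_m)$, namely
\[
0\to H^1(k,f_*\GG_m)\to H^1(X,\GG_m)\to H^0(k,R^1f_*\GG_m)\xrightarrow{\,d_2\,}H^2(k,f_*\GG_m)\to H^2(X,\GG_m).
\]
Because $X$ is projective, geometrically reduced, and geometrically connected, one has $H^0(X_{\bar k},\cO_{X_{\bar k}})=\bar k$, whence $f_*\GG_m=\GG_m$ as étale sheaves on $\Spec k$; this is the only place where the geometric hypotheses enter.

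With this identification the outer terms become explicit: $H^1(k,\GG_m)=0$ by Hilbert 90, $H^1(X,\GG_m)=\Pic(X)$, $H^2(k,\GG_m)=\Br(k)$, and $H^0(k,R^1f_*\GG_m)=\Pic_{X/k}(k)$. The last is the standard identification of the global sections of the Picard sheaf with the $k$-points of the Picard scheme: over a field the correction term $\Pic(T)$ vanishes at $T=\Spec k$, and the étale and fppf sheafifications of the Picard functor agree since $\GG_m$ is smooth. Substituting these values and truncating after the third term yields exactly
\[
0\to \Pic(X)\to \Pic_{X/k}(k)\xrightarrow{\,b\,}\Br(k),
\]
with $b:=d_2$ the transgression. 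Exactness at $\Pic_{X/k}(k)$ then says precisely that a $k$-point $L$ of the Picard scheme is represented by a genuine line bundle on $X$ if and only if $b(L)=0$, which is the descent statement we want.

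It remains to match $b(L)$ with the Brauer class of the twisted linear system $|L|$. Given $L\in\Pic_{X/k}(k)$, its base change is a line bundle $\cL$ on $X_{\bar k}$ whose isomorphism class is fixed by $G=\Gal(\bar k/k)$; choosing isomorphisms $\phi_\sigma\colon\sigma^*\cL\xrightarrow{\sim}\cL$ and measuring the failure of the cocycle condition produces a $2$-cocycle valued in $\Aut(\cL)=\GG_m(\bar k)$, whose class in $H^2(k,\GG_m)=\Br(k)$ is exactly $d_2(L)$ by the usual description of the transgression via the edge maps. The key geometric point is that the $\phi_\sigma$ act on $H^0(X_{\bar k},\cL)$ only up to scalars — the scalar ambiguity being precisely this cocycle — so after projectivizing they furnish honest descent data on $\PP\bigl(H^0(X_{\bar k},\cL)\bigr)=|\cL|$. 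The descended object is a Brauer–Severi variety over $k$ whose class is $b(L)$, and it is canonically the component of $\Hilb_X$ cut out over $\bar k$ by $|\cL|$, that is, the twisted linear system $|L|$.

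The formal part — exactness of the truncated sequence — is immediate from the spectral sequence, so the substantive work lies in the last step: checking that the transgression cocycle agrees \emph{on the nose} with the descent obstruction for the projectivized space of sections, and that the resulting Brauer–Severi variety is identified with the Hilbert-scheme component defining $|L|$ rather than merely abstractly isomorphic to it. I expect the compatibility of the two cocycles to be the main obstacle, since it requires tracking the scalar automorphisms of $\cL$ through the edge map of the spectral sequence and comparing them with the explicit descent data on $\PP(H^0)$; once this is in hand, the identification with $\Hilb_X$ follows by observing that over $\bar k$ the complete linear system and the relevant Hilbert component coincide and that both carry the same $G$-action.
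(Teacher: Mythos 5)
The paper offers no internal proof of this statement: it is imported verbatim from K\'ollar \cite[Proposition~69]{Kollar_16}, so the only comparison available is with K\'ollar's own argument and with the way the paper later uses the result. Your proposal is correct and is essentially the standard proof. Where it differs in packaging: K\'ollar's treatment is deliberately descent-theoretic and cocycle-level --- he constructs $b$ by hand, exactly as in the second half of your argument (choose $\phi_\sigma\colon \sigma^*\cL\xrightarrow{\sim}\cL$, measure the failure of the cocycle condition in $\Aut(\cL)=\bar{k}^\times$, and verify exactness directly: the class vanishes iff the $\phi_\sigma$ can be rescaled into genuine descent data iff $\cL$ descends to a line bundle on $X$) --- whereas you first extract exactness formally from the five-term Leray sequence and then must pay for it by checking that the transgression $d_2$ agrees with the explicit descent cocycle. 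Both routes are valid; the spectral-sequence version gives exactness for free but makes the cocycle comparison the crux, which you correctly flag. This packaging also matches how the paper itself uses the proposition: the cocycle $c(\sigma,\tau)=\cL_\sigma^{*}\circ\sigma^{*}(\cL_\tau^{*})\circ(\cL_{\sigma\tau}^{*})^{-1}$ in the proof of Theorem~\ref{main1} is precisely your descent obstruction.

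Three small points to tighten. First, your justification of $H^0(k,R^1f_*\GG_m)=\Pic_{X/k}(k)$ is slightly misstated: sections of a sheafification over $\Spec k$ are not the value of the presheaf at $\Spec k$, so ``the correction term $\Pic(T)$ vanishes'' is not by itself the reason; what closes this is representability of the Picard functor for proper schemes over a field (Murre--Oort, using geometric reducedness and connectedness), after which the scheme's $k$-points compute the fppf, equivalently \'etale, sheaf sections. Second, the geometric hypotheses enter twice, not once: beyond $f_*\GG_m=\GG_m$, you also need $\Aut(\cL)=\bar{k}^\times$ (the same fact $H^0(X_{\bar k},\cO_{X_{\bar k}})=\bar{k}$) so that the descent obstruction is a $\GG_m$-valued $2$-cocycle. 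Third, a convention caveat on the final identification: points of the Hilbert component are divisors, i.e.\ sections modulo scalars, so the descended form of $\PP\bigl(H^0(X_{\bar k},\cL)\bigr)$ has Brauer class equal to $b(L)$ only up to the usual dual/inverse ambiguity in projectivization conventions, and the identification presupposes $H^0(X_{\bar k},\cL)\neq 0$ so that $|L|$ is nonempty; with K\'ollar's conventions the class is $b(L)$ on the nose, and in this paper's applications only $2$-torsion classes occur, so the sign is immaterial. None of these is a gap in the mathematics --- they are exactly the points you yourself identified as the substantive work.
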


\begin{remark}
    By definition, the twisted linear system $|L|$ is a $k$-form of the linear system $|\cL|$, and therefore is a Brauer--Severi variety. Moreover, if $b(L)=0$, then there is no Brauer obstruction to the class $L \in \Pic_{X/k}(k)$; equivalently, $L$ is represented by an actual line bundle on $X$.  In this case $|L| = |\cL|$, and the corresponding linear system is untwisted.
\end{remark}

\begin{theorem}
Let $X = M^{ss,\alpha}_{C/k}(n,\cL)$.  Over $\bar{k}$ we have $\Pic(X_{\bar{k}})\cong\ZZ$, generated by the determinantal line bundle. Let $L \in \Pic_{X/k}(k)$ be any $k$–point whose base change $L_{\bar{k}}$ maps to the (positive) generator of $\Pic(X_{\bar{k}})$. Denote by $\alpha = b(L)\in\Br(k)$ the Brauer class associated to $L$ by Proposition~\ref{brobs}.  
Then
\[
    \Pic(X) \;\cong\; \per(\alpha)\,\ZZ.
\]
\end{theorem}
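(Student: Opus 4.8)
The plan is to combine the Kollár exact sequence of Proposition~\ref{brobs} with a direct computation of the group of $k$-points $\Pic_{X/k}(k)$. Writing $b\colon \Pic_{X/k}(k)\to\Br(k)$ for the Brauer-obstruction homomorphism, that sequence identifies $\Pic(X)$ with $\ker(b)$, so everything reduces to understanding $\Pic_{X/k}(k)$ as an abstract group together with the behaviour of $b$ on it. First I would pin down $\Pic_{X/k}(k)$ and exhibit $L$ as a generator, then read off the kernel of $b$ from the prescribed value $b(L)=\alpha$.

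First I would compute $\Pic_{X/k}(k)$. By hypothesis $\Pic(X_{\bar k})\cong\ZZ$, generated by the determinantal line bundle. Since $X$ is projective and geometrically integral (its base change is the geometrically unirational, hence integral, untwisted moduli space via Lemma~\ref{ln=l}), the Picard scheme is representable by a separated group scheme and $\Pic_{X/k}(\bar k)=\Pic(X_{\bar k})$. The generator is preserved by $G=\Gal(\bar k/k)$ because the determinantal construction is defined over $k$; equivalently, any automorphism of the group $\ZZ$ is $\pm 1$, and $G$ must fix the positive (ample) generator, so $G$ acts trivially. Taking Galois invariants yields $\Pic_{X/k}(k)=\Pic(X_{\bar k})^{G}=\ZZ$, and since $L_{\bar k}$ maps to the positive generator under the injection $\Pic_{X/k}(k)\hookrightarrow\Pic(X_{\bar k})$, the class $L$ generates $\Pic_{X/k}(k)\cong\ZZ$.

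Then I would conclude by computing $\ker(b)$. As $b$ is a group homomorphism by Proposition~\ref{brobs} and $\Pic_{X/k}(k)=\ZZ\cdot L$ with $b(L)=\alpha$ by hypothesis, additivity gives $b(mL)=m\alpha$ for every $m\in\ZZ$. Hence $mL\in\ker(b)$ if and only if $m\alpha=0$ in $\Br(k)$, i.e.\ if and only if $\per(\alpha)\mid m$, by the definition of the period as the order of $\alpha$. Therefore $\ker(b)=\per(\alpha)\,\ZZ\cdot L$, and the exact sequence of Proposition~\ref{brobs} gives $\Pic(X)\cong\ker(b)\cong\per(\alpha)\,\ZZ$.

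The main obstacle is the identification $\Pic_{X/k}(k)=\Pic(X_{\bar k})^{G}$ together with the triviality of the Galois action. This needs representability of $\Pic_{X/k}$ and the comparison $\Pic_{X/k}(\bar k)=\Pic(X_{\bar k})$ (valid since $X_{\bar k}$ is integral and has a $\bar k$-point), after which $k$-points coincide with Galois invariants by descent. The triviality of the $G$-action is the one genuinely geometric input: it rests on the fact that, over $\bar k$, $\Pic(X_{\bar k})\cong\ZZ$ is generated by the Galois-stable determinantal (theta) class, exactly the point already used in the proof of Theorem~\ref{main1}. Once this is established, the remaining steps are purely formal manipulations with the homomorphism $b$.
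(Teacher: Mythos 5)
Your proposal is correct and takes essentially the same route as the paper's own proof: both establish that the Galois action on $\Pic(X_{\bar{k}})\cong\ZZ$ is trivial (the paper cites the argument of Theorem~\ref{main1}, you note that $G$ must fix the positive generator), identify $\Pic_{X/k}(k)\cong\ZZ$ generated by $L$, and then apply the exact sequence of Proposition~\ref{brobs} together with $b(L)=\alpha$. The only cosmetic difference is that you compute $\ker(b)=\per(\alpha)\,\ZZ\cdot L$ directly, whereas the paper computes $\operatorname{im}(b)\cong\ZZ/\per(\alpha)\ZZ$ and deduces the index of $\Pic(X)$ in $\ZZ$ --- the same calculation read off the same sequence.
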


\begin{proof}
    As in the proof of Theorem~\ref{main1}, the Galois action on $\Pic(X_{\bar{k}})\cong \ZZ$ is trivial.  Hence the natural map $\Pic_{X/k}(k)\to\Pic(X_{\bar{k}})$ is an isomorphism, and we may identify $\Pic_{X/k}(k)\cong \ZZ$, generated by the geometric determinantal line bundle.

    Since $\Pic(X)\hookrightarrow \Pic_{X/k}(k)\cong\ZZ$, the quotient $\Pic_{X/k}(k)/\Pic(X)$ identifies with $\operatorname{im}(b)$ in the exact sequence
    \[
        0 \longrightarrow \Pic(X)\longrightarrow 
        \Pic_{X/k}(k) \xrightarrow{\,b\,} \Br(k).
    \]

    The image of the generator of $\Pic_{X/k}(k)\cong\ZZ$ under $b$ is the Brauer class $\alpha$, whose order is $\per(\alpha)$. Thus 
    \[
        \operatorname{im}(b)\;\cong\; \ZZ/\per(\alpha) \ZZ.
    \]

    It follows that the subgroup $\Pic(X)$ is exactly the subgroup of index $\per(\alpha)$ in $\ZZ$, i.e.
    \[
        \Pic(X)\;\cong\;\per(\alpha)\,\ZZ.
    \]
\end{proof}

\begin{theorem}\label{coarseisobs}
    Let $\alpha\in H^2(C,\mu_2)$ be such that $\alpha_{\bar{k}} = 0$. Assume $\alpha$ obstructs the determinantal line bundle. If $g(C) = 2$, then the moduli space
    \[
        M^{ss,\alpha}_C(2,\cO_C)\;\cong\;|\Theta|
    \]
    is a $3$-dimensional Brauer--Severi variety.  Further, assume $\operatorname{char} k = 0$ and $g(C)=3$.  Then the morphism
    \[
        \varphi\colon M^{ss,\alpha}_{C/k}(2,\cO_C)\longrightarrow |\Theta|
    \]
    induced by the (twisted) theta linear system is an embedding.
\end{theorem}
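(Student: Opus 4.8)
The plan is to reduce both assertions to the geometric situation over $\bar k$, where $\alpha$ becomes trivial and the classical results of Beauville--Narasimhan--Ramanan apply, and then to descend along the faithfully flat base change $\Spec\bar k\to\Spec k$. Since $\alpha_{\bar k}=0$, Lemma~\ref{ln=l} identifies the base change $M^{ss,\alpha}_C(2,\cO_C)_{\bar k}$ with the ordinary moduli space $M^{ss}_{C_{\bar k}}(2,\cO_C)$, to which Theorem~\ref{geometry}(i) applies (the gerbe is trivial over $\bar k$, in particular essentially trivial): it is isomorphic to $\PP^3$ via the theta morphism $\varphi_{\bar k}$. Hence $M^{ss,\alpha}_C(2,\cO_C)$ is a smooth projective $k$-scheme whose base change to $\bar k$ is $\PP^3$; that is, a $k$-form of $\PP^3$, and therefore a $3$-dimensional Brauer--Severi variety.

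Next I would produce the morphism $\varphi$ over $k$ together with its target. The subtlety is that, because $\alpha$ obstructs the determinantal line bundle, the theta bundle $\Theta$ does not descend from $\bar k$ to $k$ as a genuine line bundle. However, exactly as in the proof of Theorem~\ref{main1}, the Galois action on $\Pic(M^{ss,\alpha}_C(2,\cO_C)_{\bar k})\cong\ZZ$ is trivial, so the geometric generator determines a canonical $k$-point $L\in\Pic_{M/k}(k)$; Kollár's twisted linear system $|L|=|\Theta|$ of Proposition~\ref{brobs} is then a well-defined Brauer--Severi $k$-scheme whose Brauer class $b(L)$ equals the image $\bar\alpha$ of $\alpha$ in $\Br(k)$ under Theorem~\ref{main1}. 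The assignment $V\mapsto\Theta_V$ defining $\varphi_{\bar k}$ is given by the cohomological condition $H^0(C,E\otimes V^\vee)\ne 0$, which is manifestly Galois-equivariant, so it glues to a $k$-morphism $\varphi\colon M^{ss,\alpha}_C(2,\cO_C)\to|\Theta|$ whose base change recovers $\varphi_{\bar k}$. As $\varphi_{\bar k}$ is an isomorphism and being an isomorphism descends along $\Spec\bar k\to\Spec k$, the map $\varphi$ is an isomorphism over $k$, giving $M^{ss,\alpha}_C(2,\cO_C)\cong|\Theta|$. Because $\alpha$ obstructs the determinantal line bundle, $\bar\alpha\ne 0$, so $|\Theta|$ is a nontrivial Brauer--Severi variety.

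For the case $g=3$ in characteristic $0$, the same descent strategy applies verbatim, the only change being the input over $\bar k$: by Theorem~\ref{geometry}(iv) the geometric theta morphism $\varphi_{\bar k}\colon M^{ss}_{C_{\bar k}}(2,\cO_C)\hookrightarrow|\Theta|_{\bar k}\cong\PP^{7}$ is a closed immersion. The morphism $\varphi$ is defined over $k$ by the same Galois-equivariance of $V\mapsto\Theta_V$, and since being a closed immersion descends along the faithfully flat base change $\Spec\bar k\to\Spec k$, the fact that $\varphi_{\bar k}$ is a closed immersion forces $\varphi$ to be one as well, i.e.\ an embedding over $k$.

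The step I expect to be the main obstacle is the construction of $\varphi$ as a genuine $k$-morphism into the twisted linear system $|\Theta|$: one must verify that, although $\Theta$ fails to descend as a line bundle, the family of theta divisors $\{\Theta_V\}$ nonetheless descends to a morphism landing in Kollár's Hilbert-scheme component $|L|$, and that the descent datum on the source coming from the Galois twist of Proposition~\ref{Gtwist} is compatible with the descent datum on $|L|$. All the genuinely geometric content is already packaged in the statements over $\bar k$; the remaining work lies entirely in the faithfully flat descent of this morphism and in matching the Brauer class of the target with $\bar\alpha$.
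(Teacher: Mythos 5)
Your proposal is correct and takes essentially the same approach as the paper's proof: both reduce to the classical Beauville--Narasimhan--Ramanan results over $\bar{k}$ (Theorems~\ref{bnr} and~\ref{geometry}), descend the theta morphism to $k$ via Galois equivariance of the assignment $V \mapsto \Theta_V$ with respect to the twisting cocycle $\{\cL_\sigma\}$, and identify the target with Koll\'ar's twisted linear system $|\Theta|$ through Proposition~\ref{brobs} and Theorem~\ref{main1}. The only difference is presentational: you spell out the canonical $k$-point $L \in \Pic_{M/k}(k)$ and the faithfully flat descent of the isomorphism/closed-immersion property, steps the paper leaves implicit in its appeal to Galois-equivariant descent.
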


\begin{proof}
    Work first over $\bar{k}$.  Since $\alpha_{\bar{k}}=0$, the twisted moduli space and theta data become untwisted, and we are in the classical situation.  In particular, Theorem~\ref{bnr} identifies $M^{ss}_{C_{\bar{k}}/\bar{k}}(2,\cO_{C_{\bar{k}}})$ with the (untwisted) linear system $|\Theta_{\bar{k}}|$ when $g=2$, and Theorem~\ref{geometry} describes the corresponding map
    \[
        \varphi_{\bar{k}}\colon M^{ss}_{C_{\bar{k}}/\bar{k}}(2,\cO_{C_{\bar{k}}}) \longrightarrow |\Theta_{\bar{k}}|
    \]
    in higher genus, showing in particular that it is an embedding when $g=3$ and $\operatorname{char}k=0$.

    Now return to the base field $k$.  The class $\alpha \in H^1(k,\Pic_{C/k}[2])$ is represented by a $1$-cocycle $\sigma \mapsto \cL_\sigma$ with values in $\Pic_{C_{\bar{k}}/\bar{k}}[2]$.Tensoring by $\cL_\sigma$ induces:
    \begin{itemize}
        \item an action of $\Gal(\bar{k}/k)$ on $M^{ss}_{C_{\bar{k}}/\bar{k}}(2,\cO_{C_{\bar{k}}})$, giving the twisted form $M^{ss,\alpha}_C(2,\cO_C)$; 
        \item via Lemma~\ref{linear} and Theorem~\ref{main1}, the corresponding twisted linear system $|\Theta|$ as a form of $|\Theta_{\bar{k}}|$ in the sense of Proposition~\ref{brobs}.
    \end{itemize}
    
    By construction, the map $\varphi_{\bar{k}}$ is defined functorially using the determinantal line bundle $\Theta_{\bar{k}}$ and its sections, so it is compatible with tensoring by $\cL_\sigma$.  Equivalently, $\varphi_{\bar{k}}$ is $\Gal(\bar{k}/k)$–equivariant for the descent data defined by the cocycle $\{\cL_\sigma\}$.  Therefore $\varphi_{\bar{k}}$ descends to a morphism
    \[
        \varphi\colon M^{ss,\alpha}_C(2,\cO_C)\longrightarrow |\Theta|
    \]
    over $k$.
    
    In genus $2$, $\varphi_{\bar{k}}$ is an isomorphism, hence so is $\varphi$; thus $M^{ss,\alpha}_C(2,\cO_C)\cong |\Theta|$, which is a Brauer-Severi threefold.  In genus $3$ under $\operatorname{char}k=0$, $\varphi_{\bar{k}}$ is an embedding by Theorem~\ref{geometry}, and thus its descent $\varphi$ is also an embedding.  This proves the statement.
\end{proof}

\begin{remark} \label{twist2O}
Let $C$ be a genus $2$ curve over $k$. Then the moduli space $M_C^{ss,\alpha}(2,\cO_C)$ is isomorphic to exactly one of the following, depending on the components of the class $\alpha$:
\begin{enumerate}
    \item[(i)] the projective space $\PP^3 \cong |\Theta|$;
    \item[(ii)] a $3$-dimensional Brauer--Severi variety given by the 
    twisted linear system $|\Theta|$;
    \item[(iii)] the intersection of two quadrics in $\PP^5$;
    \item[(iv)] a Galois twist of an intersection of two quadrics in $\PP^5$.
\end{enumerate}

Here (i) follows from Proposition~\ref{brobs}, using that the determinantal line bundle descends when $\alpha$ does not obstruct it. Case (ii) follows from Theorem~\ref{coarseisobs}. Cases (iii) and (iv) follow from the fact that the moduli space of rank~$2$ vector bundles with odd degree determinant is geometrically isomorphic to the intersection of two quadrics in $\PP^5$ \cite[Theorem~4]{Narasimhan_Ramanan_69}, together with Theorem~\ref{Gtwist}.

More generally, classical theory describes the moduli space when the image of $\alpha \in H^2(C,\mu_2)$ in $H^0(k,R^2 f_* \mu_2) \simeq \ZZ/2\ZZ$ is~$1$ for curves of higher genus.  If $C$ is a smooth projective curve of genus $g \ge 2$, then $M^{ss,\alpha}_C(2,\cO_C)$ is (up to twisting) a pencil of quadrics in the sense of \cite[Theorem~1]{Desale_Ramanan_76}.

More precisely, over $\bar{k}$ the moduli space is isomorphic to the variety of $(g-2)$-dimensional linear subspaces of $\PP^{2g+1}$ contained in two quadrics
\[
    Q_1 \;=\; \sum_{i=1}^{2g+2} X_i^2, 
    \qquad 
    Q_2 \;=\; \sum_{i=1}^{2g+2} \omega_i X_i^2,
\]
where the scalars $\omega_i$ correspond to the $2g+2$ branch points of the canonical double covering $C \to \PP^1$. In particular, $M_C^{ss,\alpha}(2,\cO_C)$ is a (possibly twisted) form of this classical complete intersection description.
\end{remark}

\begin{corollary}
    Let $\alpha\in H^{2}(C,\mu_{2})$ represent a $\mu_{2}$-gerbe $\cC\to C$.  Then the moduli space of rank--$2$ vector bundles on $\cC$ with trivial determinant has two disconnected components.

    If $\alpha_{\bar{k}}=0$, then
    \[
        M_{\cC}^{ss}(2,\cO_{C})\;\cong\;|\Theta|\ \cup\ |\Theta_{\alpha}|\;\cong\;\PP^{3}\ \cup\ X,
    \]
    where $X$ is either $\PP^{3}$ or a twisted form of $\PP^{3}$.

    If $\alpha_{\bar{k}}=1$, then
    \[
        M_{\cC}^{ss}(2,\cO_{C})\;\cong\;|\Theta|\ \cup\ X,
    \]
    where $X$ is either the intersection of two quadrics in $\PP^{5}$ or its twisted form.
\end{corollary}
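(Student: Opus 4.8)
The plan is to apply the character decomposition of Proposition~\ref{vbdecomp} to the $\mu_2$-gerbe $\cC\to C$, show that the mixed locus vanishes, and then read off the geometry of the two surviving components from the results already established in Sections~3 and~4. Restricting Proposition~\ref{vbdecomp} to semistable objects and passing to good moduli spaces (cf.\ Theorem~\ref{nironi}), with $n=2$, gives
\[
    M^{ss}_{\cC}(2,\cO_C)\;=\;M^{ss}_C(2,\cO_C)\;\coprod\;M^{ss,\alpha}_C(2,\cO_C)\;\coprod\;\Mix(C,2,\cO_C).
\]
First I would argue, exactly as in the proof of Theorem~\ref{rank2}, that the mixed locus is empty: the only rank--$2$ mixed type is $(1,1)$, i.e.\ $\cL\oplus\cL_\alpha$ with $\cL$ untwisted and $\cL_\alpha$ an $\alpha$-twisted line bundle, whose determinant $\cL\otimes\cL_\alpha$ is $\alpha$-twisted and hence cannot equal the untwisted $\cO_C$ by Lemma~\ref{mixvb}(iv) with $j=0$. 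Crucially, this argument never uses $\alpha_{\bar k}=0$, so it applies verbatim when $\alpha_{\bar k}=1$ as well, leaving exactly the two components $M^{ss}_C(2,\cO_C)$ and $M^{ss,\alpha}_C(2,\cO_C)$. Each summand is geometrically integral (indeed geometrically unirational) by the good moduli space existence theorem of Section~3.2, hence geometrically connected; since the decomposition is defined over $k$, this yields exactly two disconnected components.

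Next I would identify the untwisted component, which is independent of $\alpha$: for $g=2$ the classical moduli space $M^{ss}_C(2,\cO_C)$ is identified with $|\Theta|\cong\PP^3$ by Theorem~\ref{bnr} together with Theorem~\ref{geometry}(i), giving the $\PP^3$ factor in both cases. For the twisted component when $\alpha_{\bar k}=0$, the class $\alpha$ lies in $H^1(k,\Pic_{C/k}[2])$, and Theorem~\ref{coarseisobs} identifies $M^{ss,\alpha}_C(2,\cO_C)$ with the twisted linear system $|\Theta_\alpha|$, a Brauer--Severi threefold; by Remark~\ref{twist2O}(i)--(ii) this is $\PP^3$ when $\alpha$ does not obstruct the determinantal line bundle and its nontrivial Brauer--Severi twist otherwise, yielding the first displayed isomorphism with $X=|\Theta_\alpha|$.

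For the twisted component when $\alpha_{\bar k}=1$, I would instead invoke Proposition~\ref{Gtwist}: writing the geometric invariant as $a=1\in H^0(k,R^2f_*\mu_2)\cong\ZZ/2\ZZ$, the space $M^{ss,\alpha}_C(2,\cO_C)$ is the Galois twist of the \emph{untwisted} moduli space $M^{ss}_C(2,\cO_C(-p))$, whose fixed determinant $\cO_C(-p)$ now has \emph{odd} degree. By Remark~\ref{twist2O}(iii)--(iv) (that is, \cite[Theorem~4]{Narasimhan_Ramanan_69} combined with Theorem~\ref{Gtwist}), this odd-degree moduli space is geometrically the intersection of two quadrics in $\PP^5$, so its Galois twist $X:=M^{ss,\alpha}_C(2,\cO_C)$ is either that intersection or a twisted form of it, giving the second displayed isomorphism.

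The main obstacle I anticipate is the bookkeeping in the case $\alpha_{\bar k}=1$: one must track carefully that the trivial-determinant twisted problem corresponds, under the degree shift by $\cO_C(-ap)$ of Proposition~\ref{Gtwist}, to an \emph{odd}-degree untwisted problem, since it is precisely this parity that changes the geometry from $\PP^3$ to the intersection of two quadrics. A secondary point requiring care is the connectedness assertion over $k$ rather than over $\bar k$: because the twisted components may carry no $k$-rational point, I would deduce the count of connected components from the geometric integrality of each summand together with the Galois-stability of the decomposition, rather than from the existence of any rational point.
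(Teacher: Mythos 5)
Your proof is correct and takes essentially the same route as the paper, whose entire proof is the citation ``This follows from Theorem~\ref{rank2}, Theorem~\ref{coarseisobs}, and Remark~\ref{twist2O}'' — your write-up simply expands those three ingredients. Your extra observation that the emptiness of $\Mix(C,2,\cO_C)$ in the proof of Theorem~\ref{rank2} never uses $\alpha_{\bar k}=0$ (so the two-component decomposition persists when $\alpha_{\bar k}=1$) is a genuine point of care that the paper's bare citation glosses over, and it is exactly right.
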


\begin{proof}
    This follows from Theorem~\ref{rank2}, Theorem~\ref{coarseisobs}, and Remark ~\ref{twist2O}.
\end{proof}

\begin{corollary}\label{index}
    Assume that $\alpha\in H^1(k,\Pic_{C/k}[n])$ obstructs the determinantal line bundle. Then the associated Brauer class $\beta\in \Br(k)$ satisfies $\ind(\beta)\mid n^g .$
\end{corollary}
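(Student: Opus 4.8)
The plan is to realize $\beta$ as the Brauer class of a Brauer--Severi variety of dimension $n^g-1$, after which the divisibility $\ind(\beta)\mid n^g$ follows from the standard correspondence between Brauer--Severi varieties and central simple algebras. Set $X:=M^{ss,\alpha}_C(n,\cO_C)$. Since $\alpha\in H^1(k,\Pic_{C/k}[n])$ has trivial geometric component, the base change $X_{\bar{k}}$ is the classical untwisted moduli space $M^{ss}_{C_{\bar{k}}}(n,\cO_{C_{\bar{k}}})$. First I would invoke Theorem~\ref{bnr} over $\bar{k}$: the determinantal (theta) line bundle $\Theta_{\bar{k}}$ generates $\Pic(X_{\bar{k}})\cong\ZZ$ and satisfies $\dim_{\bar{k}}H^0(X_{\bar{k}},\Theta_{\bar{k}})=n^g$, so the complete linear system $|\Theta_{\bar{k}}|$ is isomorphic to $\PP^{n^g-1}$.

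Next, exactly as in the proof of Theorem~\ref{main1}, the Galois group $\Gal(\bar{k}/k)$ acts trivially on $\Pic(X_{\bar{k}})\cong\ZZ$, so the generator $\Theta_{\bar{k}}$ descends to a $k$-rational point $L\in\Pic_{X/k}(k)$ of the Picard scheme. Applying Kollár's exact sequence (Proposition~\ref{brobs}) to the pair $(X,L)$, the boundary map $b$ sends $L$ to the Brauer class $b(L)\in\Br(k)$ of the twisted linear system $|L|$. Because $b(L)$ is by construction the obstruction to realizing $\Theta_{\bar{k}}$ by an honest line bundle on $X$ over $k$, it coincides with the class $\beta$ produced by Theorem~\ref{main1}, precisely as in the framework underlying Theorem~\ref{coarseisobs}. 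Thus $\beta=b(L)$.

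Finally, by the very definition of the twisted linear system, $|L|$ is a $k$-form of $|\Theta_{\bar{k}}|\cong\PP^{n^g-1}$, hence a Brauer--Severi variety of dimension $n^g-1$ whose associated Brauer class is $\beta$. Such a variety is of the form $\SBr(A)$ for a central simple $k$-algebra $A$ of degree $n^g$ representing $\beta$, and since the index divides the degree of any central simple algebra in a given class \cite{Gille_Szamuely_2006}, we conclude that $\ind(\beta)\mid n^g$. As a sanity check, the case $g=2$, $n=2$ recovers the Brauer--Severi threefold of Theorem~\ref{coarseisobs} and the bound $\ind(\beta)\mid 4$.

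The main obstacle is the identification $\beta=b(L)$ in the second paragraph: one must check that the cocycle-theoretic obstruction $[c(\sigma,\tau)]$ of Theorem~\ref{main1} agrees with Kollár's boundary class $b(L)$. Both measure the failure of the single line bundle $\Theta_{\bar{k}}$ to descend along the same Galois descent datum, so the agreement is formal; making it precise requires tracing the scalar cocycle $\{\cL_\sigma^{*}\}$ of Lemma~\ref{linear} through the two constructions of the connecting homomorphism and verifying that it is exactly the descent obstruction recorded by $b$. Once this matching is in place, the dimension count $n^g$ from Theorem~\ref{bnr} and the Brauer--Severi interpretation of $|L|$ yield the bound immediately.
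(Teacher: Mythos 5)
Your proposal is correct and follows essentially the same route as the paper: realize $\beta$ as the class $b(L)$ of Koll\'ar's twisted linear system attached to the descended determinantal class (Proposition~\ref{brobs}), identify that twisted linear system as a Brauer--Severi variety of dimension $n^g-1$ using the dimension count $h^0 = n^g$, and conclude since the index of a Brauer--Severi variety of dimension $d$ divides $d+1$. If anything, your version is slightly more careful than the paper's, which cites Theorem~\ref{coarseisobs} (stated only for $n=2$, $g=2$) for the general $n^g-1$ case and leaves the identification $\beta = b(L)$ implicit, whereas you flag that matching of the cocycle of Theorem~\ref{main1} with Koll\'ar's boundary map as the point requiring verification and correctly observe that it is formal, both obstructions measuring descent of the same line bundle along the same Galois datum.
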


\begin{proof}
    By Proposition~\ref{brobs}, the obstruction $\alpha$ gives rise to a Brauer class $\beta\in\Br(k)$ represented by the twisted linear system $|\Theta|$.  By Theorem~\ref{coarseisobs}, this twisted linear system is a Brauer--Severi variety of dimension $n^g-1$.  A Brauer--Severi variety of dimension $d$ has dividing $d+1$, hence $\ind(\beta)\mid n^g$.
\end{proof}

\section{Period--index problem for two-torsion classes}
In this section we compute the index of the Brauer classes lying in the image of the map $H^1\bigl(k,\Pic_{C/k}\bigr)\rightarrow \Br(k)$ and establish our main result on the period--index problem for curves of genus~$2$. Unless otherwise stated, we assume that $k$ is a field of characteristic $\neq 2$ and that $C$ is a smooth projective curve over $k$ of genus $2$.

\subsection{Index of the moduli}
Assume there exists a nontrivial $2$-torsion line bundle $\xi \in \Pic^{0}(C)[2]$. Associated with $\xi$ is an étale double cover $\pi \colon C' \rightarrow C,$ constructed from the cyclic algebra $\cA := \cO_{C} \oplus \xi.$

We consider the fixed locus of $\xi$ on the moduli space.  For a field $k$, define
\[
    S_{k} \;:=\; \left\{E \in M^{ss}_{C/k}(n,\cL) \ \big|\ E \cong E \otimes \xi\right\}.
\]
In what follows, we restrict attention to the case of rank~$2$ vector bundles with trivial determinant. 

Assume $\bar{k}$ is the algebraic closure of $k$. Then by \cite[Proposition~3.6 and Remark~3.8]{Beauville_Narasimhan_Ramanan_89}, for every vector bundle $E \in S_{\bar{k}}$ there exists a line bundle $\cL_E \in \Pic(C'_{\bar{k}})$, unique up to the involution $\iota^{*}$, such that 
\[
    E \;\cong\; \pi_{*}\cL_E .
\]
Moreover, the line bundles $\cL_E$ that arise in this way are exactly those satisfying the norm condition $\Norm_{C'/C}(\cL_E) = \xi$.

\begin{lemma}\label{prym}
There is a natural bijection
\[
    S_{\bar{k}}\;\xrightarrow{\ \sim\ }\;\Norm_{C'/C}^{-1}(\xi)\big/(\iota^{*}),\qquad E\longmapsto \cL_E,
\]
where $\iota$ is the involution of the double cover $C' \to C$. Under this identification, $S_{\bar{k}}$ corresponds to a rational curve inside the moduli space $M^{ss}_{C_{\bar{k}}}(2,\cO_{C_{\bar{k}}})$ of rank~$2$ vector bundles with trivial determinant.
\end{lemma}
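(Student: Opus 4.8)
The plan is to treat the two assertions separately: first establish the bijection using the cited results of Beauville--Narasimhan--Ramanan, and then analyze the target $\Norm_{C'/C}^{-1}(\xi)/(\iota^{*})$ as the quotient of a Prym torsor by inversion. The map $E \mapsto \cL_E$ is well defined into the quotient by the uniqueness-up-to-$\iota^{*}$ clause of \cite[Proposition~3.6, Remark~3.8]{Beauville_Narasimhan_Ramanan_89}, and that same result supplies the norm condition $\Norm_{C'/C}(\cL_E) = \xi$ together with injectivity.

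For surjectivity I would begin with an arbitrary $\cL \in \Pic(C'_{\bar k})$ satisfying $\Norm_{C'/C}(\cL) = \xi$ and verify directly that $E := \pi_{*}\cL$ lies in $S_{\bar k}$. The rank is $2$ since $\pi$ has degree $2$; the determinant is computed from $\det(\pi_{*}\cL) \cong \Norm_{C'/C}(\cL) \otimes \det(\pi_{*}\cO_{C'}) \cong \xi \otimes \xi \cong \cO_C$, using $\xi^{\otimes 2} \cong \cO_C$; and the invariance $E \cong E \otimes \xi$ follows from the projection formula
\[
    \pi_{*}\cL \otimes \xi \;\cong\; \pi_{*}(\cL \otimes \pi^{*}\xi) \;\cong\; \pi_{*}\cL,
\]
since $\pi^{*}\xi \cong \cO_{C'}$ for the cover defined by $\xi$. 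Semistability of $\pi_{*}\cL$ is the only remaining point, and it is exactly the input furnished by the cited BNR analysis.

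The geometric heart of the lemma is the identification of the quotient as a rational curve. First, the involution $\iota^{*}$ on $\Norm^{-1}(\xi)$ is inversion: from the standard relation $\cL \otimes \iota^{*}\cL \cong \pi^{*}\Norm_{C'/C}(\cL)$, the right-hand side equals $\pi^{*}\xi \cong \cO_{C'}$ on $\Norm^{-1}(\xi)$, whence $\iota^{*}\cL \cong \cL^{-1}$. Next, Riemann--Hurwitz gives $g(C') = 3$, so the Prym variety $P := (\ker\Norm)^{0}$ has dimension $g(C') - g(C) = 1$ and is an elliptic curve; the coset $\Norm^{-1}(\xi)$ is a torsor under $\ker\Norm$, so its connected components are smooth genus-$1$ curves. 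The inversion $\cL \mapsto \cL^{-1}$ has fixed points, namely the $2$-torsion bundles on $C'$ with $\Norm_{C'/C}(\cL)=\xi$, and a Riemann--Hurwitz count shows that the quotient of a smooth genus-$1$ curve by an involution with a fixed point is forced to be $\PP^{1}$. This yields that $S_{\bar k}$ is rational.

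I expect this last step to be the main obstacle, for two reasons. First, one must control the component structure: $\ker\Norm$ has two components for an étale double cover, inversion descends to the identity on $\ker\Norm/P \cong \ZZ/2\ZZ$ and hence preserves each component, so the quotient is a disjoint union of copies of $\PP^{1}$; one should confirm this matches the intended reading of ``a rational curve'' (and for the downstream period--index argument it is rationality of the components that matters). Second, and more delicately, one must verify that the inversion genuinely has fixed points, i.e.\ that the norm map $\Pic(C'_{\bar k})[2] \to \Pic(C_{\bar k})[2]$ is surjective onto the class of $\xi$; this is precisely what rules out the quotient being another genus-$1$ curve rather than $\PP^{1}$, and I would establish it through the standard Prym-theoretic description of the $2$-torsion of $\ker\Norm$.
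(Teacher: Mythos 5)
Your proposal is correct and follows the same skeleton as the paper's proof---the BNR correspondence, the determinant computation $\det(\pi_*\cL)\cong\Norm_{C'/C}(\cL)\otimes\xi$ reducing to the norm condition, the Prym torsor of dimension $g(C')-g(C)=1$, and a Riemann--Hurwitz count with four branch points---but it diverges in ways worth recording. First, you identify $\iota^*$ with inversion directly from $\cL\otimes\iota^*\cL\cong\pi^*\Norm_{C'/C}(\cL)$ and $\pi^*\xi\cong\cO_{C'}$, whereas the paper chooses a square root $\eta$ of $\xi$, translates $\Norm_{C'/C}^{-1}(\xi)$ by $\pi^*\eta^{-1}$, and uses that $\iota^*$ restricts to $-1$ on $\Prym(C'/C)$; your route avoids the auxiliary choice of $\eta$. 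Second, your explicit surjectivity check (rank, determinant, projection formula) is left implicit in the paper. Third, and substantively, your component analysis is more accurate than the paper's own argument: for a connected \emph{\'etale} double cover, $\ker\Norm_{C'/C}$ has two connected components, so $\Norm_{C'/C}^{-1}(\xi)$ is a disjoint union of two genus-one torsors under the elliptic Prym and the quotient is $\PP^1\sqcup\PP^1$; classically this matches the fact that the fixed locus of a Heisenberg involution on $M^{ss}_{C_{\bar k}}(2,\cO_{C_{\bar k}})\cong\PP^3$ is a pair of disjoint lines. The paper's proof silently treats $\Norm_{C'/C}^{-1}(\xi)$ as a connected torsor under $\Prym(C'/C)$, which is precisely the imprecision you flagged---and connectedness matters downstream, since the Brauer--Severi curve $S_k$ used in the index computation should be a Galois-stable rational component of this locus. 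Two small refinements to your plan: the fixed-point verification you single out as delicate is in fact automatic and needs no analysis of the norm on $2$-torsion, since on each component the involution takes the form $x\mapsto a-x$ in the torsor structure, and divisibility of the elliptic curve gives exactly four solutions of $2x=a$; and your $\pi_0$ argument for component preservation, stated for $\ker\Norm_{C'/C}$, needs the one-line supplement on the coset that $\cL^{\otimes 2}\cong\cL\otimes(\iota^*\cL)^{-1}\in\im(1-\iota^*)=\Prym(C'/C)$, so that inversion indeed preserves each component of $\Norm_{C'/C}^{-1}(\xi)$.
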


\begin{proof}
    We work over the algebraic closure $\bar{k}$ throughout. Consider the line bundle $\cL_{E}$ corresponding to $E$ via \cite[Proposition~3.6 and Remark~3.8]{Beauville_Narasimhan_Ramanan_89}. By the determinant formula
    \[
        \det(\pi_{*}\cL_E)\;\cong\;\Norm_{C'/C}(\cL_E)\otimes \det(\pi_{*}\cO_{C'}),
    \]
    and in our situation one has $\det(\pi_{*}\cO_{C'})\cong \xi$.  Thus
    \[
        \det(\pi_{*}\cL_E)\;\cong\;\Norm_{C'/C}(\cL_E)\otimes \xi.
    \]
    The condition that $\pi_{*}\cL_E$ have trivial determinant is therefore equivalent to
    \[
        \Norm_{C'/C}(\cL_{E}) \;\cong\; \xi^{-1}.
    \]

    Since $\xi$ is $2$-torsion, we have $\xi^{-1}\cong\xi$, so we are led to the norm condition $\Norm_{C'/C}(\cL_E)\cong \xi$.  The correspondence we described before then identifies $S_{\bar{k}}$ with
    \[
        \Norm^{-1}_{C'/C}(\xi)\big/(\iota^{*}),
    \]
    where $\iota$ is the involution of $C'$.

    Next, $\Norm^{-1}_{C'/C}(\xi)$ is a torsor under the Prym variety
    $\Prym(C'/C)$, and
    \[
        \dim \Prym(C'/C)\;=\;g(C') - g(C)\;=\;(2g-1)-g\;=\;g-1\;=\;1,
    \]
    where we use the Hurwitz theorem \cite[Theorem~IV.2.4]{Hartshorne_77} to compute $g(C') = 2g-1$ for the étale double cover $\pi$. Thus $\Norm^{-1}_{C'/C}(\xi)$ is a torsor under an elliptic curve.

    We now show that $S_{\bar{k}}$ is a rational curve.  Still working over $\bar{k}$, choose $\eta\in\Pic(C)$ with $\eta^{\otimes 2}\simeq\xi$.
    Then
    \[
        \Norm_{C'/C}(\pi^{*}\eta)= \eta^{\otimes 2}= \xi,
    \]
    so $\pi^{*}\eta\in\Norm^{-1}_{C'/C}(\xi)$.  For any $\cL'\in\Norm^{-1}_{C'/C}(\xi)$ we have
    \[
        \Norm_{C'/C}(\cL'\otimes\pi^{*}\eta^{-1})\;\cong\;\Norm_{C'/C}(\cL')\otimes\Norm_{C'/C}(\pi^{*}\eta^{-1})\;\cong\;\xi\otimes\xi^{-1}\;\cong\;\cO_{C},
    \]
    so translation by $\pi^{*}\eta^{-1}$ induces an isomorphism
    \[
        t_{\pi^{*}\eta^{-1}}\colon\Norm^{-1}_{C'/C}(\xi)\xrightarrow{\;\sim\;}\Prym(C'/C),
    \]
    with inverse given by $\cM\mapsto\cM\otimes\pi^{*}\eta$ for $\cM\in\Prym(C'/C)$.

    Furthermore,
    \[
        t_{\pi^{*}\eta^{-1}}(\iota^{*}\cL)\;=\;\iota^{*}\cL\otimes\pi^{*}\eta^{-1}\;\cong\;\iota^{*}\cL\otimes\iota^{*}\pi^{*}\eta^{-1}\;\cong\;\iota^{*}(\cL\otimes\pi^{*}\eta^{-1}),
    \]
    so under $t_{\pi^{*}\eta^{-1}}$ the involution $\iota^{*}$ on $\Norm^{-1}_{C'/C}(\xi)$ corresponds to the involution $\iota^{*}$ on $\Prym(C'/C)$.  Since $\Prym(C'/C)$ is the anti-invariant part of $\Pic^{0}(C')$ under $\iota^{*}$, the restriction of $\iota^{*}$ to $\Prym(C'/C)$ is the $(-1)$-map.  Thus
    \[
        \Norm^{-1}_{C'/C}(\xi)\big/(\iota^{*})\;\cong\;\Prym(C'/C)\big/\{\pm 1\}.
    \]

    The fixed points of the involution $\{\pm 1\}$ are exactly the $2$-torsion points $\Prym(C'/C)[2]$, of which there are four.  Hence the quotient map
    \[
        \Prym(C'/C) \longrightarrow \Prym(C'/C)\big/\{\pm 1\}
    \]
    is a double cover branched at four points.  Applying the Riemann--Hurwitz formula to this map, we obtain
    \[
        2g\bigl(\Prym(C'/C)\bigr)-2 \;=\; 2\bigl(2g(S_{\bar{k}})-2\bigr) + 4.
    \]
    Since $\Prym(C'/C)$ is an elliptic curve, $g(\Prym(C'/C))=1$, so the left-hand side is $0$ and we get $0 \;=\; 4g(S_{\bar{k}}),$ which forces $g(S_{\bar{k}})=0$.  Thus $S_{\bar{k}}$ is a rational curve.
\end{proof}

\begin{lemma}\label{coprime}
    \textup{\cite[Proposition~4.1.1]{Lieblich_08}}
    Let $K$ be a field and let $\alpha \in \Br(K)$ be a Brauer class of period $\per(\alpha)=n$. If $L/K$ is a finite field extension of degree $d$ with $\gcd(d,n)=1$, then
    \[
        \per(\alpha) \;=\; \per(\alpha|_{L})\qquad\text{and}\qquad\ind(\alpha) \;=\; \ind(\alpha|_{L}).
    \]
\end{lemma}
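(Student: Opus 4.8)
The plan is to reduce both equalities to two standard divisibility relations coming from restriction and corestriction on Brauer groups, and then to eliminate the leftover factors using the coprimality hypothesis together with the fact (recalled above from \cite[Theorem~2.8.7]{Gille_Szamuely_2006}) that $\per(\alpha)$ and $\ind(\alpha)$ have the same prime divisors. Throughout I write $d=[L:K]$ and use the corestriction map $\cores_{L/K}\colon\Br(L)\to\Br(K)$ together with the fundamental identity $\cores_{L/K}\circ\res_{L/K}=d$ on $\Br(K)$.

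For the period, note first that $\res_{L/K}$ is a group homomorphism, so $\per(\alpha|_L)\mid\per(\alpha)=n$. Writing $m=\per(\alpha|_L)$, we have $m\cdot\res_{L/K}(\alpha)=0$; applying $\cores_{L/K}$ and the identity $\cores_{L/K}\circ\res_{L/K}=d$ gives $md\,\alpha=0$, hence $n\mid md$. Setting $k=n/m$, this says $k\mid d$; but $k\mid n$ and $\gcd(n,d)=1$ force $\gcd(k,d)=1$, so $k=1$ and $\per(\alpha|_L)=n$.

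For the index I would establish the two divisibilities $\ind(\alpha|_L)\mid\ind(\alpha)$ and $\ind(\alpha)\mid d\cdot\ind(\alpha|_L)$. The first comes from base change of the underlying division algebra $D$ of $\alpha$: since $\dim_L(D\otimes_K L)=\dim_K D$ and $D\otimes_K L\cong M_s(D')$ for the division algebra $D'$ representing $\alpha|_L$, one gets $\ind(\alpha)=s\cdot\ind(\alpha|_L)$. The second follows by choosing a finite separable splitting field $L'/L$ of $\alpha|_L$ with $[L':L]=\ind(\alpha|_L)$: then $L'$ also splits $\alpha$ over $K$, and since the index divides the degree of any finite separable splitting field, $\ind(\alpha)\mid[L':K]=d\cdot\ind(\alpha|_L)$. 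Writing $\ind(\alpha)=e\cdot\ind(\alpha|_L)$, the second relation gives $e\mid d$, while $e\mid\ind(\alpha)$ together with the coincidence of prime factors of $\ind(\alpha)$ and $\per(\alpha)=n$ yields $\gcd(e,d)=1$; hence $e=1$ and $\ind(\alpha)=\ind(\alpha|_L)$.

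The main obstacle is the divisibility $\ind(\alpha)\mid d\cdot\ind(\alpha|_L)$: it relies on realizing $\ind(\alpha|_L)$ by a splitting field $L'/L$ that is separable over $K$, so that $L'/K$ is again separable and the principle ``the index divides the degree of a finite separable splitting field'' applies. Since we work in characteristic $\neq 2$ and the extensions in play are separable, this causes no difficulty here; in general one appeals to the existence of a separable splitting field of minimal degree and to separability of $L/K$. Everything else is routine arithmetic with the coprimality condition.
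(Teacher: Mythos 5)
The paper gives no proof of this lemma at all --- it is quoted verbatim from Lieblich, so there is no in-paper argument to diverge from --- and your proposal supplies exactly the standard argument behind that citation: restriction--corestriction with $\cores_{L/K}\circ\res_{L/K}=d$ for the period, and the sandwich $\ind(\alpha|_L)\mid\ind(\alpha)\mid d\cdot\ind(\alpha|_L)$ combined with the coincidence of prime factors of period and index for the index. Both halves of your arithmetic are correct.

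One justification needs repair, though it does not affect the conclusion. The lemma's hypothesis does not require $L/K$ to be separable, so your claim that $L'/K$ is separable --- and your closing appeal to ``separability of $L/K$'' --- is unavailable in general. The fix is that the principle you need is stronger than the one you quoted: $\ind(\alpha)$ divides the degree of \emph{any} finite splitting field $M/K$, with no separability hypothesis. Indeed, if $D\otimes_K M\cong M_t(M)$ with $t=\ind(\alpha)$, the simple left module $V=M^{t}$ satisfies $\dim_K V = t\,[M:K]$, and $V$ is a left vector space over the division ring $D$, so $t^{2}=\dim_K D$ divides $t\,[M:K]$, i.e.\ $t\mid [M:K]$. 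With this, your separable splitting field $L'/L$ of degree $\ind(\alpha|_L)$ (which exists by K\"othe's theorem on maximal separable subfields) gives $\ind(\alpha)\mid[L':K]=d\cdot\ind(\alpha|_L)$ regardless of whether $L/K$ is separable. The same caveat touches the corestriction in your period argument, which is standard only for separable $L/K$; for general finite $L/K$ one either invokes the transfer for finite flat morphisms, or factors $L/K$ through the separable closure of $K$ in $L$ and treats the purely inseparable part (of $p$-power degree, automatically coprime to $n$ when $p\mid n$ is excluded by the coprimality hypothesis) separately. In the paper's actual applications the extensions are odd-degree separable with $\operatorname{char} k\neq 2$, so your argument applies there verbatim.
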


\begin{theorem}
    Let $k$ be a field such that $H^{1}(k,\Pic_{C/k}[2])$ contains a class obstructing the determinantal line bundle on the moduli space $M^{ss,\alpha}_{C/k}(2,\cO_C)$ of $\alpha$–twisted rank-$2$ vector bundles with trivial determinant. Let $\beta$ be the image of this class under the map $H^{1}(k,\Pic_{C/k}[2]) \rightarrow \Br(k)$ constructed in Theorem~\ref{main1}. Then $\per(\beta)=\ind(\beta)=2.$
\end{theorem}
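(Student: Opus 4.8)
The plan is to trap $\ind(\beta)$ between the bounds already in hand and then kill the larger value with the rational curves of Lemma~\ref{prym}. First I would record the two elementary constraints. The map of Theorem~\ref{main1} is a group homomorphism out of the $2$-torsion group $H^1(k,\Pic_{C/k}[2])$, so $\beta$ is $2$-torsion; it is nonzero exactly because $\alpha$ obstructs the determinantal line bundle, whence $\per(\beta)=2$. Corollary~\ref{index}, applied with $n=2$ and $g=2$, gives $\ind(\beta)\mid 4$; since $\per(\beta)\mid\ind(\beta)$ and the two invariants share the same prime factors, this leaves only $\ind(\beta)\in\{2,4\}$. All the work is thus to rule out $\ind(\beta)=4$ by exhibiting a closed point of degree dividing $2$ on the Brauer--Severi threefold $X:=M^{ss,\alpha}_{C/k}(2,\cO_C)$, whose class is $\beta$ by Theorem~\ref{coarseisobs}.

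Such points will come from the fixed-locus curves of Lemma~\ref{prym}. For each nonzero geometric $2$-torsion class $\xi\in\Jac(C)[2](\bar k)$, the locus $S_\xi=\{[E]:E\cong E\otimes\xi\}$ is isomorphic to $\PP^1$ over $\bar k$. I would first check that these curves are permuted by the twisted Galois action defining $X$: since $\alpha_{\bar k}=0$ we have $X_{\bar k}\cong M^{ss}_{C_{\bar k}}(2,\cO_{C_{\bar k}})$, and for $\sigma\in\Gal(\bar k/k)$ the action sends $[E]$ to $[{}^\sigma E\otimes\cL_\sigma]$, where $\{\cL_\sigma\}$ is a cocycle representing $\alpha$. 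The relation $E\cong E\otimes\xi$ then transports to ${}^\sigma E\otimes\cL_\sigma\cong({}^\sigma E\otimes\cL_\sigma)\otimes{}^\sigma\xi$, so $\sigma$ carries $S_\xi$ to $S_{{}^\sigma\xi}$. Consequently, whenever $\xi$ is fixed by $\Gal(\bar k/L)$ for a subfield $L$ --- equivalently $\xi\in\Jac(C)[2](L)$ as a point of the group scheme --- the curve $S_\xi$ descends to a closed subvariety of $X_L$ that is geometrically $\PP^1$, hence (after reduction) a conic over $L$.

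The remaining ingredient is an odd-degree descent. There are $15$ nonzero geometric $2$-torsion classes, and because $15$ is odd at least one $\Gal(\bar k/k)$-orbit has odd size; let $L=k(\xi)$ be the residue field of a point $\xi$ of that orbit, so $[L:k]$ is odd. Over $L$ the conic $S_\xi\subset X_L$ has a closed point of degree dividing $2$, as every conic does, and this is a closed point of the Brauer--Severi variety $X_L$ of degree dividing $2$; therefore $\ind(\beta_L)\mid 2$. Finally, since $[L:k]$ is odd and hence coprime to $\per(\beta)=2$, Lemma~\ref{coprime} yields $\ind(\beta)=\ind(\beta_L)\mid 2$, which together with $2=\per(\beta)\mid\ind(\beta)$ forces $\ind(\beta)=2$.

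I expect the main obstacle to lie in the descent bookkeeping of the second paragraph: verifying carefully that $S_\xi$ is defined over $L$ inside the \emph{twisted} space $X_L$ (not merely over $\bar k$) and that its abstract $L$-form is genuinely a form of $\PP^1$, so that the elementary degree-$2$-point argument for conics applies. Two subtleties deserve attention. First, only Galois-invariance of $\xi$ as a scheme-theoretic point is needed for the descent, not the existence of an honest $\xi$-twisted line bundle on $C_L$; this is what lets us avoid any Brauer obstruction to realizing $\xi$. Second, the identification $S_\xi\cong\PP^1$ of Lemma~\ref{prym} is geometric and so survives base change to $\bar L=\bar k$, but one should pass to the reduced structure (and, if needed, its normalization) before invoking the conic argument, checking that a degree-$2$ point upstairs still yields a degree-dividing-$2$ closed point of $X_L$.
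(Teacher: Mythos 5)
Your proposal is correct and follows essentially the same route as the paper: exploit the odd-size Galois orbit among the $15$ nonzero points of $\Pic^0(C)[2]$ to pass to an odd-degree extension, invoke Lemma~\ref{coprime} to preserve period and index, and then use the genus-$0$ fixed-locus curve of Lemma~\ref{prym} inside the Brauer--Severi threefold $M^{ss,\alpha}_{C/k}(2,\cO_C)$ to force $\ind(\beta)=2$. Your closing step---descending $S_\xi$ equivariantly and extracting a degree-$2$ closed point from the resulting conic---is if anything slightly more careful than the paper's appeal to ``the minimal twisted linear subvariety,'' which tacitly treats $S$ as a twisted line in $\PP^3$.
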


\begin{proof}
    Since $\Pic^{0}(C)[2]$ is finite étale of rank $16$ (as $\operatorname{char}(k)\neq 2$), every nontrivial $2$-torsion line bundle $\xi$ becomes defined over a finite separable extension whose degree equals the size of its Galois orbit.  Among the possible orbit sizes, at least one nontrivial $2$-torsion point has odd orbit size, since the $15$ nonzero points cannot all lie in even-sized orbits.  Since  $\Pic^{0}(C)[2]$ is finite étale in all characteristics $\neq 2$, such an odd-degree field of definition may always be obtained by a finite separable extension.  Thus, after a finite separable extension $L/k$ of odd degree, we may assume that a chosen nontrivial $\xi$ is defined over $L$.

    Because the period of the Brauer class $\beta$ is $2$, Lemma~\ref{coprime} implies that period and index are unchanged under odd-degree separable extensions.  Hence we may assume from the outset that $\xi$ is defined over $k$.

    By Proposition~\ref{brobs}, the obstruction class $\beta\in\Br(k)$ is represented by the Brauer--Severi threefold $|\Theta|\cong M^{ss,\alpha}_{C/k}(2,\cO_C)$.  Since we have exhibited a Brauer--Severi curve $S_k \subset M^{ss,\alpha}_{C/k}(2,\cO_C)$, the index of $\beta$ equals the dimension of this minimal twisted linear subvariety plus one.  Thus $\ind(\beta)=2$, and therefore $\per(\beta)=2$ as well.
\end{proof}

\begin{theorem}\label{main5}
    Assume $\alpha\in \Br(C)[2]$ is the Brauer class whose image in $H^{1}(k,\Pic_{C/k})$ obstructs the determinantal line bundle on $M^{ss,\alpha}_{C/k}(2,\cO_C)$.  Assume further that for every finite extension $K/k$ and every $\beta\in\Br(K)$ one has $\ind(\beta)\mid 2^{\,i}$. Then $\ind(\alpha)\mid 2^{\,i+2}$.
\end{theorem}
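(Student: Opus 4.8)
The plan is to exploit the description of the moduli space $M := M^{ss,\alpha}_{C/k}(2,\cO_C)$ as a Brauer--Severi variety and to trade the two obstructions it carries---the Brauer--Severi obstruction to a rational point and the gerbe obstruction to a tautological bundle---for field extensions whose degrees I can control, and finally to push a twisted bundle back down to $C$. The first step is to record the geometric input. Since the image of $\alpha$ in $H^1(k,\Pic_{C/k})$ obstructs the determinantal line bundle, Theorem~\ref{coarseisobs} identifies $M$ with the twisted linear system $|\Theta|$, a $3$-dimensional Brauer--Severi variety whose class $\delta\in\Br(k)[2]$ is the descent obstruction of Proposition~\ref{brobs}. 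By the preceding theorem, $\ind(\delta)=2$; hence $M$ carries a closed point of degree $2$, i.e.\ there is an extension $K/k$ with $[K:k]=2$, $\delta_K=0$, and $M_K\cong\PP^3_K$. Choosing a $K$-point $[E]$ in the dense open stable locus (whose complement is the Kummer quartic) produces a $K$-rational point of $M^{s,\alpha}_{C_K/K}(2,\cO_{C_K})$.

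Next I would convert this point into an honest twisted bundle. Because the stable locus $\cM^{s,\alpha}_{C_K/K}(2,\cO_{C_K})\to M^{s,\alpha}_{C_K/K}(2,\cO_{C_K})$ is a $\mu_2$-gerbe, the obstruction to lifting $[E]$ to an $\alpha_K$-twisted rank-$2$ bundle on $C_K$ lies in $H^2(K,\mu_2)$, so its image $\gamma\in\Br(K)$ is $2$-torsion. Applying the hypothesis to the finite extension $K/k$ and the class $\gamma\in\Br(K)$ gives $\ind(\gamma)\mid 2^i$, so there is an extension $K'/K$ with $[K':K]=\ind(\gamma)\mid 2^i$ and $\gamma_{K'}=0$. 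Over $K'$ the point lifts, yielding a locally free $\alpha_{K'}$-twisted sheaf $E'$ of rank $2$ with trivial determinant on $C_{K'}$.

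Finally I would descend to $C$ by pushforward. The projection $q\colon C_{K'}\to C$ is finite flat of degree $[K':k]=[K':K]\,[K:k]$, which divides $2^i\cdot 2=2^{i+1}$. Since $\alpha_{K'}$ is the pullback of $\alpha$, the induced morphism of $\mu_2$-gerbes $\sC_{K'}\to\sC$ respects the banding, so $q_*E'$ is a locally free $\alpha$-twisted sheaf on $C$ of rank $2\,[K':k]$, a divisor of $2^{i+2}$. By Proposition~\ref{existtwist}, equivalently from the definition of the index as the gcd of ranks of locally free $\alpha$-twisted sheaves, this gives $\ind(\alpha)\mid 2^{i+2}$, as claimed.

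The main obstacle is the middle step. One must check that the gerbe obstruction $\gamma$ is genuinely a $2$-torsion class to which the hypothesis applies; this relies on the residual gerbe being banded by $\mu_2$, and therefore on choosing $[E]$ in the stable locus rather than at a strictly semistable point. One must also verify that pushforward along $C_{K'}\to C$ carries a $q^*\alpha$-twisted sheaf to an $\alpha$-twisted sheaf with rank multiplied by $[K':k]$, which is the compatibility of $\sC_{K'}\to\sC$ with the $\mu_2$-banding. The degree bookkeeping---a factor $2$ from the Brauer--Severi obstruction, $2^i$ from the gerbe obstruction, and $2$ from the pushforward---is exactly what produces the exponent $i+2$, so the sharp value $\ind(\delta)=2$ supplied by the preceding theorem (rather than the weaker $\ind(\delta)\mid 4$ of Corollary~\ref{index}) is essential to obtaining the stated bound.
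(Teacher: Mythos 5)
Your proposal is correct and follows essentially the same route as the paper's proof: a quadratic extension $K/k$ producing a $K$-point of $M^{ss,\alpha}_{C/k}(2,\cO_C)$, then the hypothesis applied to the $\mu_2$-gerbe lifting obstruction in $\Br(K)[2]$ contributing the factor $2^i$, and a final factor of $2$ coming from the resulting rank-$2$ twisted bundle. The only differences are cosmetic: you get the quadratic extension from $\ind(\delta)=2$ for the Brauer--Severi threefold rather than by splitting the conic $S_k$ directly (equivalent, since that theorem's proof rests on $S_k$), and you conclude by pushing the bundle forward along the finite flat map $C_{K'}\to C$ and using that $\ind(\alpha)$ divides the rank of a locally free $\alpha$-twisted sheaf, where the paper instead splits the resulting quaternion class by one more quadratic extension --- indeed your explicit restriction of the $K$-point to the stable locus, where the stack is genuinely a $\mu_2$-gerbe over the coarse space, is slightly more careful than the paper's wording.
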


\begin{proof}
    Let $K/k$ be a quadratic extension that splits the Brauer--Severi curve $S_k \subset M^{ss,\alpha}_{C/k}(2,\cO_C)$ constructed in Lemma~\ref{prym}.  Thus $S_k(K)\neq\varnothing$, and hence $M^{ss,\alpha}_{C/k}(2,\cO_C)(K)\neq\varnothing$.
    
    A $K$--rational point of the coarse moduli space lifts to a $K$--point of the moduli stack of $\alpha$-twisted sheaves if and only if the associated $\mu_{2}$-gerbe is trivial over $K$.  The obstruction is a $2$-torsion Brauer class $\beta\in\Br(K)[2]$, and by hypothesis $\ind(\beta)\mid 2^{\,i}$. Thus, after a further extension $L/K$ of degree dividing $2^{\,i}$, the class $\beta$ becomes trivial and the gerbe is trivial over $L$.
    
    Over $L$, the point of the coarse moduli space lifts to an $\alpha$--twisted vector bundle of rank~$2$ with trivial determinant. To trivialize $\alpha_L$, we need an additional degree-$2$ extension, and this contributes one more factor of $2$.
    
    Combining these extensions, we obtain an extension of $k$ of degree dividing $2^{\,i+2}$ over which $\alpha$ becomes split.  Hence $\ind(\alpha)\mid 2^{\,i+2}$, as claimed.
\end{proof}

\begin{remark}
    A natural idea for reducing the index is to work inside the strictly semistable locus.  Suppose $K/k$ is a quadratic extension such that $S_k\otimes K$ splits.  Let $E \in M^{ss,\alpha}_C(2,\cO_C)(K)$ be a strictly semistable bundle. Then $E$ is $S$--equivalent to a direct sum $E \cong \cL \oplus \cL^{\vee}$ for some line bundle $\cL$ on $C_K$.
    
    We ask whether such points can lie on $S_K$, i.e.\ whether $E \;\cong\; E\otimes \xi$. Since
    \[
    E\otimes\xi \;\cong\;(\cL\otimes\xi)\,\oplus\,(\cL^{\vee}\otimes\xi)\quad\text{and}\quad E\cong \cL\oplus\cL^{\vee},
    \]
    this requires $\cL \;\cong\; \cL^{\vee}\otimes \xi$, hence $\cL^{\,2}\;\cong\; \xi$. Thus the intersection $S_K \cap M^{ss,\alpha}_C(2,\cO_C)^{\mathrm{strict}}$ consists exactly of bundles of the form $\cL\oplus\cL^{\vee}$ with $\cL^{2}\cong\xi$.
    
    Geometrically, the intersection of $S_{K}$ with the strictly semistable locus is described by the line bundles $\cL$ and $\cL^{\vee}$ satisfying $\cL^{\otimes 2}\cong \xi$.  Hence this intersection maps to a $K$--rational point on the twisted Kummer surface in $M^{ss,\alpha}_C(2,\cO_C)$. 
    
    However, this does not reduce the index. To reduce the index one would need a $K$--rational point of the twisted Picard variety $\Pic^0_{C,\alpha}$, which is a double cover of its twisted Kummer surface away from the $2$--torsion locus. A $K$--rational point on the Kummer surface does \emph{not} necessarily lift to a $K$--rational point of $\Pic^0_{C,\alpha}$; in general its two preimages are defined only over a quadratic extension of~$K$. Thus this approach cannot produce a further drop in the index.
    
    In fact, the following example shows that such a lift need not exist.
\end{remark}

\begin{theorem}\label{C1field}
    Let $k$ be a $C_{1}$ field and $C/k$ a genus~$2$ curve.  Then for every nontrivial $\alpha\in\Br(C)[2]$ one has
    \[
        \per(\alpha)=\ind(\alpha)=2.
    \]
\end{theorem}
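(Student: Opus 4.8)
The plan is to prove directly that the index of $\alpha$ in $\Br(k(C))$ divides $2$. Since $\alpha$ is a nontrivial $2$-torsion class we have $\per(\alpha)=2$ at once, and the divisibility $\per(\alpha)\mid\ind(\alpha)$ recorded in the introduction gives $2\mid\ind(\alpha)$; so the whole theorem reduces to the single bound $\ind(\alpha)\mid 2$. The engine for that bound is Proposition~\ref{existtwist}: it suffices to exhibit a locally free $\alpha$-twisted sheaf $\cF$ of rank $2$ with trivial determinant \emph{on $C$ over $k$ itself}, for then $\alpha$ lies in the image of $H^1(C,\PGL_2)\to\Br(C)$, is represented by the degree-$2$ Azumaya algebra of endomorphisms of $\cF$, and hence satisfies $\ind(\alpha)\mid 2$. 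The role of the $C_1$ hypothesis is precisely to produce such a bundle \emph{without passing to any field extension}; this is what will let us beat the weaker bound $\ind(\alpha)\mid 2^{\,i+2}$ of Theorem~\ref{main5} and obtain the sharp value.

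First I would unwind the $C_1$ hypothesis. A finite extension of a $C_1$ field is again $C_1$, so $\Br(K)=0$ for every finite $K/k$; in particular $\Br(k)=0$. Because $C_{\bar k}$ is a curve over an algebraically closed field we have $\Br(C_{\bar k})=0$, so the geometric invariant of $\alpha$ vanishes and we may fix a $\mu_2$-gerbe lift of $\alpha$ whose component in $H^0(k,R^2f_*\mu_2)\cong\ZZ/2$ is trivial. In particular the obstruction class in $\Br(k)$ attached to $\alpha$ by Theorem~\ref{main1} is automatically zero. By Theorem~\ref{coarseisobs} and Remark~\ref{twist2O} the moduli space $M^{ss,\alpha}_C(2,\cO_C)$ is a Brauer--Severi threefold, and since its class lies in $\Br(k)=0$ it is in fact
\[
    M^{ss,\alpha}_C(2,\cO_C)\;\cong\;|\Theta|\;\cong\;\PP^3 .
\]

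Next I would manufacture the twisted bundle. Over $\bar k$ the strictly semistable locus is the Kummer quartic surface, so the geometrically stable locus is the complement of a proper closed subset of $\PP^3$ and carries a $k$-rational point (immediate when $k$ is infinite, and true for the finite $C_1$ fields by a point count, since $\#\PP^3(\FF_q)$ outgrows the number of points on a surface). Fix a geometrically stable point $[E]\in M^{s,\alpha}_C(2,\cO_C)(k)$. A geometrically stable $\alpha$-twisted bundle has automorphism group equal to the scalars, so the residual gerbe obstruction to lifting $[E]$ from the good moduli space to the moduli stack lies in $\Br(k)=0$; hence $[E]$ lifts to an honest locally free $\alpha$-twisted sheaf $\cF$ of rank $2$ with trivial determinant defined over $k$. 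Proposition~\ref{existtwist} then gives $\ind(\alpha)\mid 2$, and combining this with $2\mid\ind(\alpha)$ and $\per(\alpha)=2$ yields $\per(\alpha)=\ind(\alpha)=2$.

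The main obstacle is the lifting step: one must guarantee that a $k$-point of the coarse space lifts to a twisted bundle over the base field $k$, rather than only over an extension. This is exactly where $\Br(k)=0$ enters essentially, and it is what removes the two extra factors of $2$ present in Theorem~\ref{main5}; there one is forced first to pass to a quadratic splitting field of the Brauer--Severi curve $S_k$ and then to trivialize a residual Brauer obstruction in $\Br(K)$, whereas over a $C_1$ field both of these are already trivial and the degree-$2$ Azumaya algebra exists over $k$ outright. A secondary technical point is the existence of a geometrically stable $k$-point over the smallest finite $C_1$ fields; should one wish to avoid the point count, one can instead take a general $k$-point of the rational Brauer--Severi curve $S_k\cong\PP^1$ of Lemma~\ref{prym} (rational because $\Br(k)=0$), whose generic point is stable.
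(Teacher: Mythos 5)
Your proposal is correct and follows essentially the same route as the paper's own proof: use $\Br(k)=0$ to identify $M^{ss,\alpha}_C(2,\cO_C)$ with the untwisted linear system $|\Theta|\cong\PP^3$, produce a $k$-rational point, lift it to an honest rank-$2$ $\alpha$-twisted bundle with trivial determinant because the lifting obstruction lives in $\Br(k)=0$, and conclude $\ind(\alpha)\mid 2$ via the associated degree-$2$ Azumaya algebra (Proposition~\ref{existtwist}). Your version is in fact slightly more careful than the paper's at the lifting step, since you restrict to a geometrically stable point so that the stack-to-space morphism is genuinely a gerbe before invoking the Brauer obstruction, whereas the paper delegates this to the proof of Theorem~\ref{main5}.
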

\begin{proof}
    Let $\alpha \in \Br(C)[2]$ be nontrivial. Since $k$ is $C_{1}$, we have $\Br(k)=0$, and the Leray spectral sequence gives an isomorphism
    \[
        \Br(C)\;\cong\;H^{1}\!\left(k,\Pic_{C/k}\right).
    \]
    Thus $\alpha$ corresponds to a nontrivial element of $H^{1}(k,\Pic_{C/k})$.

    By Proposition~\ref{brobs}, the vanishing of $\Br(k)$ implies that $\alpha$ does not obstruct the determinantal line bundle of $M^{ss,\alpha}_{C/k}(2,\cO_{C})$. Hence this moduli space is isomorphic to $\PP^{3}$, the untwisted linear system $|\Theta|$, and in particular it contains a $k$--rational point. Since $\Br(k)=0$ by Tsen's theorem and $\alpha$ is nontrivial, the proof of Theorem~\ref{main5} shows that the index is only contributed by the existence of a rank~$2$ twisted vector bundle with trivial determinant, yielding $\ind(\alpha) = 2$. 
\end{proof}

\begin{remark}
    In the proof of Theorem~\ref{C1field}, the fact that $\ind(\alpha)=2$ also shows that the lift of the $k$--rational point from the twisted Kummer surface to the twisted Picard variety is impossible.
\end{remark}

\begin{remark}
    Throughout this section, the term index always refers to the index of a Brauer class.  
    
    We emphasize that the index of a $\Pic_{C/k}$–torsor may be considerably larger; see, for example, \cite[Lemma~7.6]{Creutz_Viray_23}.
\end{remark}

\subsection{The period–index problem for certain $n$–folds}
In this subsection we construct a class of $n$–dimensional varieties obtained as iterated fibre products of genus~$2$ curves.  These examples carry natural $2$–torsion Brauer classes whose period–index behaviour can be analysed using the results of the previous sections.

\begin{lemma}\label{prodjacobian}
    Let $X$ and $Y$ be smooth projective varieties over $k$, where $\cha k = 0$. Then there is an isomorphism
    \[
        \Pic^{0}_{X\times Y/k}\;\cong\;\Pic^{0}_{X/k}\times \Pic^{0}_{Y/k}.
    \]
\end{lemma}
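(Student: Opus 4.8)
The plan is to exhibit a canonical homomorphism and prove it is an isomorphism by combining an injectivity check with a dimension count. Concretely, the two projections $p\colon X\times Y\to X$ and $q\colon X\times Y\to Y$ induce
\[
\Phi\colon \Pic^0_{X/k}\times \Pic^0_{Y/k}\longrightarrow \Pic^0_{X\times Y/k},\qquad (\cL_X,\cL_Y)\longmapsto p^{*}\cL_X\otimes q^{*}\cL_Y .
\]
Since $\cha k=0$ and $X$, $Y$, $X\times Y$ are smooth projective (and geometrically integral), the three Picard schemes are smooth, so these identity components are abelian varieties; as $p^{*}$ and $q^{*}$ are homomorphisms of group schemes they carry identity components into identity components, so $\Phi$ is a homomorphism of abelian varieties defined over $k$. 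Being an isomorphism may be tested after the faithfully flat base change along $\bar{k}/k$, so I would work over $\bar{k}$, where $X$ and $Y$ acquire rational points $x_0$ and $y_0$.

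First I would prove $\Phi$ is injective on $\bar{k}$–points. If $(\cL_X,\cL_Y)$ lies in the kernel, so $p^{*}\cL_X\otimes q^{*}\cL_Y\cong\cO_{X\times Y}$, then restricting along the slice $i_{y_0}\colon X\xrightarrow{\sim}X\times\{y_0\}\hookrightarrow X\times Y$ and using that $p\circ i_{y_0}=\id_X$ while $q\circ i_{y_0}$ is constant yields $i_{y_0}^{*}p^{*}\cL_X\cong\cL_X$ and $i_{y_0}^{*}q^{*}\cL_Y\cong\cO_X$, whence $\cL_X\cong\cO_X$; symmetrically, restricting along $\{x_0\}\times Y$ gives $\cL_Y\cong\cO_Y$. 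Thus $\ker\Phi$ has no nontrivial $\bar{k}$–point, and since $\cha k=0$ it is reduced, hence trivial.

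It then remains to match dimensions. Because the Picard schemes are reduced, $\dim\Pic^0_{X/k}=h^1(X,\cO_X)$, and likewise for $Y$ and $X\times Y$; the K\"{u}nneth formula, together with $h^0(X,\cO_X)=h^0(Y,\cO_Y)=1$, gives
\[
h^1(X\times Y,\cO_{X\times Y})=h^1(X,\cO_X)+h^1(Y,\cO_Y),
\]
so the source and target of $\Phi$ have the same dimension. A homomorphism of abelian varieties of equal dimension with trivial kernel is an isomorphism (it is injective, so its image is a subabelian variety of full dimension in a connected target, hence everything), which concludes the argument over $\bar{k}$ and therefore over $k$.

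The step I expect to require the most care is conceptual rather than computational. One must resist trying to show that \emph{every} line bundle on $X\times Y$ is a box product $p^{*}\cL_X\otimes q^{*}\cL_Y$: this is false in general, since $\Pic(X\times Y)$ carries correspondence classes (for instance the class of the diagonal on $E\times E$) that are nonzero in $\NS(X\times Y)$. The content of the lemma is precisely that these classes do not lie in $\Pic^0$, and the K\"{u}nneth dimension count is what lets us avoid any explicit decomposition of bundles. The hypothesis $\cha k=0$ enters exactly here, guaranteeing both the equality $\dim\Pic^0=h^1(\cO)$ and the reducedness of $\ker\Phi$.
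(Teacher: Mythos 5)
Your proof is correct, but it takes a genuinely different route from the paper. The paper's proof is a two-line citation argument: it invokes a structural decomposition of the full Picard scheme, $\Pic_{X\times Y/k}\cong \Pic_{X/k}\times\Pic_{Y/k}\times\Hom(\Pic^0_{X/k},\Pic^0_{Y/k})$ (Smith's lecture notes), then uses Mumford's theorem that $\Hom(A,B)\cong\ZZ^\rho$ is discrete for abelian varieties $A,B$, so that the $\Hom$-factor disappears upon passing to identity components. Your correspondence classes (the diagonal on $E\times E$, etc.) are precisely the points of that discrete $\Hom$-factor, so your cautionary remark at the end is, in effect, the heart of the paper's argument made explicit. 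What your approach buys: it is self-contained modulo standard facts (Cartier smoothness in characteristic zero, K\"unneth, properness of $\Pic^0$), it exhibits the isomorphism canonically as the box-product map $\left(\cL_X,\cL_Y\right)\mapsto p^{*}\cL_X\otimes q^{*}\cL_Y$ rather than extracting it from a black-box decomposition, and your slice-restriction injectivity argument plus descent from $\bar{k}$ is clean. What the paper's approach buys: it is shorter, it yields the stronger statement about the full Picard scheme, and --- as the remark following the lemma in the paper notes --- the alternative reference (Colliot-Th\'el\`ene--Skorobogatov) removes the characteristic-zero hypothesis entirely, whereas your argument uses $\cha k=0$ essentially in two places you correctly identify: reducedness of $\ker\Phi$ and the equality $\dim\Pic^0=h^1(\cO)$, the latter of which can fail in positive characteristic when $\Pic^0$ is nonreduced. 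One small point worth recording if you write this up: to conclude that an injective homomorphism onto a full-dimensional image is an isomorphism, you should note that trivial kernel \emph{as a group scheme} (which your reducedness step gives) makes $\Phi_{\bar k}$ a degree-one isogeny, not merely a bijection on points; your proof does supply this, but the parenthetical phrasing slightly undersells it.
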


\begin{proof}
    By \cite[Theorem~3.3.12]{Smith_06}, there is an isomorphism
    \[
        \Pic_{X\times Y/k}\;\cong\;\Pic_{X/k}\times \Pic_{Y/k}\times \Hom\!\bigl(\Pic^{0}_{X/k},\Pic^{0}_{Y/k}\bigr).
    \]
    By \cite[Cor.~1, p.~178]{Mumford_74}, for any abelian varieties $A$ and $B$ one has
    \[
        \Hom(A,B)\cong \ZZ^{\rho},\textrm{ where }\rho\le 4\dim A\dim B,
    \]
    so the group $\Hom(\Pic^{0}_{X/k},\Pic^{0}_{Y/k})$ is discrete.

    Passing to connected components of the identity, the discrete factor contributes nothing. Thus we have
    \[
        \Pic^{0}_{X\times Y/k}
        \;\cong\;
        \Pic^{0}_{X/k}\times \Pic^{0}_{Y/k}.
    \]
\end{proof}

\begin{remark}
    An alternative proof of the same result appears in \cite[Proposition~5.7.1]{Colliot-Thelene_Skorobagatov_21}. Their argument does not require the assumption $\operatorname{char} k = 0$, even though $\Pic^0_{X/k}$ may fail to be an abelian variety in positive characteristic due to nonreducedness. Consequently, we do not impose this assumption in the following theorem.
\end{remark}

\begin{definition}
    The \emph{algebraic Brauer group} of $X$, denoted $\Br_1(X)$, is defined as
    \[
        \Br_1(X) := \ker\!\bigl(\Br(X) \longrightarrow H^0(k, R^2 f_* \GG_m)\bigr)\subset \Br(X).
    \]
\end{definition}

\begin{theorem}
    Let $k$ be a $C_{1}$ field, and let 
    \[
        X = C_{1}\times \cdots \times C_{n}
    \]
    be the product of $n$ genus $2$ curves over $k$. Let $\alpha \in \Br_1(X)$ be an algebraic Brauer class. Then the period-index bound holds for $\alpha$, namely
    \[
        \ind(\alpha)\mid \per(\alpha)^{\,n}=2^{n}.
    \]
\end{theorem}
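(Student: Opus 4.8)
The plan is to produce an explicit $\alpha$-twisted locally free sheaf of rank $2^n$ on $X$; the existence of such a sheaf forces $\ind(\alpha)\mid 2^n$ (cf.\ Proposition~\ref{existtwist}), and since $\per(\alpha)=2$ the index is in any case a power of $2$, so this is the only bound needed. First I would exploit that $k$ is $C_1$: Tsen's theorem gives $\Br(k)=0$, and $C_1$ fields have cohomological dimension $\le 1$, so the relevant $H^3(k,\GG_m)$ vanishes. Feeding this into the Hochschild--Serre spectral sequence for $X\to\Spec k$ collapses the five-term exact sequence to a canonical isomorphism $\Br_1(X)\cong H^1\!\bigl(k,\Pic(X_{\bar k})\bigr)$, which is where our class $\alpha$ now lives.

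Next I would decompose the Galois module $\Pic(X_{\bar k})$. Iterating Lemma~\ref{prodjacobian} together with the Künneth description of the Picard scheme of a product yields a canonical, Galois-equivariant splitting
\[
    \Pic(X_{\bar k})\;\cong\;\bigoplus_{i=1}^n \Pic(C_{i,\bar k})\;\oplus\;\bigoplus_{1\le i<j\le n}\Hom\bigl(J_{i,\bar k},J_{j,\bar k}\bigr),
\]
where $J_i=\Pic^0_{C_i/k}$. Applying $H^1(k,-)$ and using $\Br(k)=0$ once more to identify $H^1(k,\Pic(C_{i,\bar k}))\cong\Br(C_i)$, I obtain a decomposition $\alpha=\sum_i p_i^*\alpha_i+\gamma$, where $p_i\colon X\to C_i$ are the projections, each $\alpha_i\in\Br(C_i)[2]$, and $\gamma$ is the \emph{correspondence part} coming from the lattices $H^1(k,\Hom(J_i,J_j))[2]$.

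For the pullback part $\sum_i p_i^*\alpha_i$ the argument is clean. By Theorem~\ref{C1field} each $\alpha_i$ has index $2$, so there is a rank-$2$ $\alpha_i$-twisted bundle $V_i$ on $C_i$. Pulling back and forming the external tensor product $\bigotimes_{i=1}^n p_i^*V_i$, Proposition~\ref{operation} shows the twist adds to $\sum_i p_i^*\alpha_i$ while the rank multiplies to $2^n$. Thus the pullback part of $\alpha$ is realized by a rank-$2^n$ twisted bundle, and (were $\gamma=0$) we would be done; this already settles, for instance, the case of pairwise non-isogenous curves, for which $\Hom(J_i,J_j)=0$ and hence $\gamma=0$.

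The main obstacle is the correspondence part $\gamma$. External tensor products only ever produce pullback twists, so $\gamma$ requires genuinely non-split bundles on $X_{\bar k}$; moreover a term-by-term bound is hopeless, since there are $\binom{n}{2}$ correspondence summands and paying an independent factor of $2$ for each (e.g.\ via Lemma~\ref{coprime} and the splitting field of each $\Hom(J_i,J_j)$) would give $2^{n+\binom{n}{2}}$, far too large. The strategy I would pursue is to realize the entire class $\alpha$ by a single $k$-rational point of the twisted moduli space $M^{ss,\alpha}_X(2^n,\cO_X)$ of rank-$2^n$ $\alpha$-twisted bundles: since $\Br(k)=0$, such a $k$-point lifts to the moduli stack and yields an honest rank-$2^n$ $\alpha$-twisted bundle, whence $\ind(\alpha)\mid 2^n$. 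Over $\bar k$ this twisted space becomes the untwisted moduli space, on which the external product of the factorwise rank-$2$ bundles gives a distinguished point; the difficulty is to arrange a Galois-invariant such point whose associated descent data realizes the correspondence cocycle $\gamma$ (in the obstruction-theoretic spirit of Proposition~\ref{brobs}), while keeping the rank pinned at $2^n$ rather than accumulating a factor for each correspondence summand. I expect this rank-control in the presence of nonzero correspondences to be the decisive and hardest step, and the place where an input beyond the external product — some twisted Poincaré/correspondence sheaf on $C_i\times C_j$ with descent obstruction exactly the relevant summand of $\gamma$ — must be supplied.
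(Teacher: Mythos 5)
Your overall skeleton matches the paper's proof on the ``pullback'' part: the paper also uses $\Br(k)=0$ to identify $\Br_1(X)[2]$ with $H^1(k,\Pic_{X/k})[2]$, invokes Lemma~\ref{prodjacobian} to split $\Pic^0_{X/k}$ into the Jacobians, decomposes $H^1\bigl(k,\Pic^0_{X/k}[2]\bigr)\cong\bigoplus_i H^1\bigl(k,\Pic^0_{C_i/k}[2]\bigr)$, and applies Theorem~\ref{C1field} to get $\ind(\alpha_i)=2$. Where you build the rank-$2^n$ external tensor product $\bigotimes_i p_i^*V_i$, the paper instead just bounds $\ind(\alpha)$ by the product of the $\ind(\alpha_i)$ --- in the paper's convention ($\ind$ = minimal degree of a field extension of $k$ killing the class) this is simply the compositum of the quadratic splitting fields of the $\alpha_i$. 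Your bundle construction is a fine variant, with the caveat that a rank-$2^n$ twisted locally free sheaf via Proposition~\ref{existtwist} bounds the degree of a representing Azumaya algebra rather than directly the field-extension index used in the theorem; the compositum argument sidesteps that (though the paper itself makes the same conflation elsewhere, so this is a minor point).

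The genuine gap is exactly the correspondence part $\gamma$ that you flag and, by your own admission, do not resolve --- and you should know that the paper's proof does not resolve it either. The paper passes from $H^1(k,\Pic_{X/k})[2]$ to $H^1(k,\Pic_{X/k}[2])$ via the unproved sentence that ``every $2$-torsion class on $X$ arises from $H^1(k,\Pic_{X/k}[2])$.'' Since $\NS(X_{\bar{k}})\cong \ZZ^n\oplus\bigoplus_{i<j}\Hom(J_i,J_j)$ is torsion-free, one has $\Pic_{X/k}[2]=\Pic^0_{X/k}[2]$, so this assertion silently discards your $\gamma$: any class in the image of $H^1(k,\Pic_{X/k}[2])\to H^1(k,\Pic_{X/k})$ factors through $H^1(k,\Pic^0_{X/k})$ and hence dies in $H^1(k,\NS(X_{\bar{k}}))$ by exactness of $H^1(k,\Pic^0_{X/k})\to H^1(k,\Pic_{X/k})\to H^1(k,\NS(X_{\bar{k}}))$, whereas a $2$-torsion class with nonzero image in $H^1\bigl(k,\Hom(J_i,J_j)\bigr)$ for some $i<j$ admits no such lift. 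When the Jacobians are pairwise non-isogenous, or more generally when $H^1\bigl(k,\Hom(J_i,J_j)\bigr)[2]=0$ for all $i<j$, both your argument and the paper's are complete; in the remaining cases your proposal stalls at the step you candidly identify (realizing $\gamma$ by a single rank-$2^n$ descent datum), and the paper offers nothing in its place. So your proof is incomplete at $\gamma$, but your diagnosis is sharper than the paper's treatment: closing the gap would require either showing that $2$-torsion classes with nontrivial $H^1(k,\NS)$-image do not occur in $\Br_1(X)[2]$ here, or supplying the twisted correspondence-sheaf input you sketch at the end.
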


\begin{proof}
    Since $k$ is $C_{1}$, we have $\Br(k)=0$. The Leray spectral sequence for $X\to \Spec k$ therefore gives
    \[
        \Br_1(X)[2]\;\cong\;H^{1}\!\left(k,\Pic_{X/k}\right)[2].
    \]
    In particular, every $2$-torsion class on $X$ arises from $H^{1}(k,\Pic_{X/k}[2])$.

    By Lemma~\ref{prodjacobian}, the identity component of the Picard scheme of the product splits:
    \[
        \Pic^{0}_{X/k}\;\cong\;\Pic^{0}_{C_{1}/k}\times \cdots \times \Pic^{0}_{C_{n}/k}.
    \]
    Taking $2$-torsion and using that $[2]$ is a finite morphism of group schemes, we obtain
    \[
        \Pic^{0}_{X/k}[2]\;\cong\;\Pic^{0}_{C_{1}/k}[2]\times \cdots \times \Pic^{0}_{C_{n}/k}[2].
    \]

    Since Galois cohomology commutes with finite direct products, we get
    \[
        H^{1}\!\left(k,\Pic^{0}_{X/k}[2]\right)\;\cong\;\bigoplus_{i=1}^{n}H^{1}\!\left(k,\Pic^{0}_{C_{i}/k}[2]\right).
    \]
    Thus any class $\alpha\in H^{1}\left(k,\Pic^{0}_{X/k}[2]\right)$ decomposes uniquely as
    \[
        \alpha = (\alpha_{1},\dots,\alpha_{n}),\textrm{ for }\alpha_{i}\in H^{1}\!\left(k,\Pic^{0}_{C_{i}/k}[2]\right).
    \]

    By Theorem~\ref{C1field}, each nontrivial $\alpha_{i}$ satisfies
    \[
        \ind(\alpha_{i})=\per(\alpha_{i})=2.
    \]
    The index of $\alpha$ is at most the product of the indices of the nontrivial components~$\alpha_{i}$. Therefore
    \[
        \ind(\alpha)\;\mid\;2^{\,n}.
    \]
    This proves the desired period-index bound.
\end{proof}

\section*{Conflict of interest and ethical statement}
On behalf of all authors, the corresponding author states that there is no ethical issues or conflict of interest associated with this manuscript. No human participants or animals were involved in this research.

\printbibliography

@book{Gille_Szamuely_2006, 
place={Cambridge}, 
series={Cambridge Studies in Advanced Mathematics}, 
title={Central Simple Algebras and Galois Cohomology}, 
publisher={Cambridge University Press}, 
author={Gille, Philippe and Szamuely, Tamás}, 
year={2006}, 
collection={Cambridge Studies in Advanced Mathematics}}

@article{Colliot-Thélène_02,
author = {Colliot-Thélène, Jean-Louis},
year = {2002},
month = {01},
pages = {127-146},
title = {Exposant et indice d'algèbres simples centrales non ramifiées},
volume = {48},
journal = {L'Enseignement Mathématique}, 
doi = {10.5169/seals-66070}
}

@article{De_Jong_04,
author = {{de Jong}, A.~J.},
shorthand = {dJ04},
title = {{The period-index problem for the Brauer group of an algebraic surface}},
volume = {123},
journal = {Duke Mathematical Journal},
number = {1},
publisher = {Duke University Press},
pages = {71 -- 94},
year = {2004},
doi = {10.1215/S0012-7094-04-12313-9},
}

@article{Lieblich_08, 
title={Twisted sheaves and the period-index problem}, 
volume={144}, 
DOI={10.1112/S0010437X07003144}, 
number={1}, 
journal={Compositio Mathematica}, 
author={Lieblich, Max}, 
year={2008}, 
pages={1–31}}

@article{De_Jong_Starr_10, 
title={Almost proper GIT-stacks and discriminant avoidance}, 
shorthand = {dJS10}, 
volume={15}, 
DOI={10.4171/DM/319}, 
journal={Doc. Math. }, 
author={de Jong, Johan and Starr, Jason},
year={2010}, 
pages={957–972}}

@article{Benoist_19,
	author = {Benoist, Olivier},
	doi = {10.1007/s10240-019-00108-7},
	id = {Benoist2019},
	isbn = {1618-1913},
	journal = {Publications math{\'e}matiques de l'IH{\'E}S},
	number = {1},
	pages = {63--110},
	title = {The period-index problem for real surfaces},
	url = {https://doi.org/10.1007/s10240-019-00108-7},
	volume = {130},
	year = {2019},
	bdsk-url-1 = {https://doi.org/10.1007/s10240-019-00108-7}}

@article{Lieblich_11,
url = {https://doi.org/10.1515/crelle.2011.059},
title = {Period and index in the Brauer group of an arithmetic surface},
author = {Max Lieblich},
pages = {1--41},
volume = {2011},
number = {659},
journal = {Journal für die reine und angewandte Mathematik},
doi = {doi:10.1515/crelle.2011.059},
year = {2011},
lastchecked = {2024-04-04}
}

@book{Colliot-Thelene_Skorobagatov_21,
  title={The Brauer--Grothendieck Group},
  author={Colliot-Th{\'e}l{\`e}ne, J.L. and Skorobogatov, A.N.},
  isbn={9783030742485},
  series={Ergebnisse der Mathematik und ihrer Grenzgebiete. 3. Folge / A Series of Modern Surveys in Mathematics},
  url={https://books.google.com/books?id=nQM7EAAAQBAJ},
  year={2021},
  publisher={Springer International Publishing}
}

@inproceedings{De_Jong_05,
  title={A result of Gabber},
  author={de Jong, Aise Johan},
shorthand = {dJ05}, 
  year={2005},
  publisher ={unpublished}
}

@article{Edidin_Hassett_Kresch_Vistoli_01,
 ISSN = {00029327, 10806377},
 abstract = {A natural question is to determine which algebraic stacks are quotient stacks. In this paper we give some partial answers and relate it to the question of whether, for a scheme X, the natural map from the Brauer group (equivalence classes of Azumaya algebras) to the cohomological Brauer group (the torsion subgroup of $H^{2}(X,{\Bbb G}_{m})$) is surjective.},
 author = {Dan Edidin and Brendan Hassett and Andrew Kresch and Angelo Vistoli},
 journal = {American Journal of Mathematics},
 number = {4},
 pages = {761--777},
 publisher = {Johns Hopkins University Press},
 title = {Brauer Groups and Quotient Stacks},
 volume = {123},
 year = {2001}
}

@book{Olsson_16,
  title={Algebraic Spaces and Stacks},
  author={Olsson, M.},
  isbn={9781470427986},
  lccn={2015043394},
  series={Colloquium Publications},
  year={2016},
  publisher={American Mathematical Society}
}

@misc{Hotchkiss_Perry_24,
      title={The period-index conjecture for abelian threefolds and Donaldson-Thomas theory}, 
      author={James Hotchkiss and Alexander Perry},
      year={2024},
      eprint={2405.03315},
      archivePrefix={arXiv},
      primaryClass={math.AG},
      url={https://arxiv.org/abs/2405.03315}, 
}

@misc{Kollar_16,
      title={Severi-Brauer varieties; a geometric treatment}, 
      author={János Kollár},
      year={2016},
      eprint={1606.04368},
      archivePrefix={arXiv},
      primaryClass={math.AG},
      url={https://arxiv.org/abs/1606.04368}, 
}

@article{Lieblich_07,
author = {Max Lieblich},
title = {{Moduli of twisted sheaves}},
volume = {138},
journal = {Duke Mathematical Journal},
number = {1},
publisher = {Duke University Press},
pages = {23 -- 118},
year = {2007},
doi = {10.1215/S0012-7094-07-13812-2},
URL = {https://doi.org/10.1215/S0012-7094-07-13812-2}
}

@article{Caldararu_00,
  title={Derived Categories of Twisted Sheaves on Calabi-Yau Manifolds},
  author={Andrei Căldăraru},
  year={2000},
  journal = {Thesis}
}

@article{Narasimhan_Ramanan_69,
 ISSN = {0003486X, 19398980},
 author = {M. S. Narasimhan and S. Ramanan},
 journal = {Annals of Mathematics},
 number = {1},
 pages = {14--51},
 publisher = {[Annals of Mathematics, Trustees of Princeton University on Behalf of the Annals of Mathematics, Mathematics Department, Princeton University]},
 title = {Moduli of Vector Bundles on a Compact Riemann Surface},
 volume = {89},
 year = {1969}
}

@inbook{Alper_Belmans_Bragg_Liang_Tajakka_22, 
place={Cambridge}, 
series={London Mathematical Society Lecture Note Series}, 
title={Projectivity of the moduli space of vector bundles on a curve}, 
booktitle={Stacks Project Expository Collection}, 
publisher={Cambridge University Press}, 
author={Alper, Jarod and Belmans, Pieter and Bragg, Daniel and Liang, Jason and Tajakka, Tuomas}, 
editor={Belmans, Pieter and Ho, Wei and de Jong, Aise Johan}, 
year={2022}, 
pages={90–125}, 
collection={London Mathematical Society Lecture Note Series}}

@misc{Beauville_94,
      title={Vector bundles on curves and generalized theta functions: recent results and open problems}, 
      author={Arnaud Beauville},
      year={1994},
      eprint={alg-geom/9404001},
      archivePrefix={arXiv},
      primaryClass={alg-geom},
      url={https://arxiv.org/abs/alg-geom/9404001}, 
}

@article{Drezet_Narasimhan_89,
author = {J.-M. Drezet and M.S. Narasimhan},
journal = {Inventiones mathematicae},
number = {1},
pages = {53-94},
title = {Groupe de Picard des varietes de modules de fibres semi-stables sur les courbes algebriques.},
volume = {97},
year = {1989},
}

@article{Beauville_Narasimhan_Ramanan_89,
author = {A. Beauville and M.S. Narasimhan and S. Ramanan},
journal = {Journal für die reine und angewandte Mathematik},
keywords = {compact Riemann surface; generic vector bundle; moduli space of semi- stable vector bundles},
pages = {169-179},
title = {Spectral curves and the generalised theta divisor.},
url = {http://eudml.org/doc/153148},
volume = {398},
year = {1989},
}

@misc{Huybrecht_Mattei_25,
      title={The special Brauer group and twisted Picard varieties}, 
      author={Daniel Huybrechts and Dominique Mattei},
      year={2025},
      eprint={2310.04032},
      archivePrefix={arXiv},
      primaryClass={math.AG},
      url={https://arxiv.org/abs/2310.04032}, 
}

@article{Bergh_Schnurer_19,
  title={Decompositions of derived categories of gerbes and of families of Brauer-Severi varieties},
  author={Daniel Bergh and Olaf M. Schnurer},
  journal={Documenta Mathematica},
  year={2019},
}

@article{Desale_Ramanan_76,
author = {U.V. Desale and S. Ramanan},
journal = {Inventiones mathematicae},
pages = {161-186},
title = {Classification of Vector Bundles of Rank 2 on Hyperelliptic Curves},
volume = {38},
year = {1976},
}

@misc{Nironi_09,
      title={Moduli Spaces of Semistable Sheaves on Projective Deligne-Mumford Stacks}, 
      author={Fabio Nironi},
      year={2009},
      eprint={0811.1949},
      archivePrefix={arXiv},
      primaryClass={math.AG},
      url={https://arxiv.org/abs/0811.1949}, 
}

@misc{Iyer_Parimala_22,
      title={Period -Index problem for hyperelliptic curves}, 
      author={J. N. Iyer and R. Parimala},
      year={2022},
      eprint={2201.12780},
      archivePrefix={arXiv},
      primaryClass={math.AG},
      url={https://arxiv.org/abs/2201.12780}, 
}

@book{Hartshorne_77,
  title={Algebraic Geometry},
  author={Hartshorne, R.},
  isbn={9780387902449},
  lccn={lc77001177},
  series={Graduate Texts in Mathematics},
  year={1977},
  publisher={Springer}
}

@article{Knudsen_Mumford_76,
 ISSN = {00255521, 19031807},
 author = {Finn Knudsen and David Mumford},
 journal = {Mathematica Scandinavica},
 number = {1},
 pages = {19--55},
 publisher = {Mathematica Scandinavica},
 title = {The projectivity of the moduli space of stable curves I: preliminaries on "det" and "Div"},
 volume = {39},
 year = {1976}
}

@article{Beauville_88,
author = {Beauville, Arnaud},
journal = {Bulletin de la Société Mathématique de France},
keywords = {moduli space of semi-stable ; Picard group; determinant bundle; theta functions on the Jacobian},
language = {fre},
number = {4},
pages = {431-448},
publisher = {Société mathématique de France},
title = {Fibrés de rang 2 sur une courbe, fibré déterminant et fonctions thêta},
volume = {116},
year = {1988},
}

@article{Newstead_68,
title = {Stable bundles of rank 2 and odd degree over a curve of genus 2},
journal = {Topology},
volume = {7},
number = {3},
pages = {205-215},
year = {1968},
issn = {0040-9383},
doi = {https://doi.org/10.1016/0040-9383(68)90001-3},
author = {P.E. Newstead}
}

@article{Yoshioka_06,
  title={Moduli spaces of twisted sheaves on a projective variety},
  author={Yoshioka, Kota},
  journal={Moduli spaces and arithmetic geometry},
  volume={45},
  pages={1--30},
  year={2006},
  publisher={Math. Soc. Japan}
}

@article{Alper_13,
     author = {Alper, Jarod},
     title = {Good moduli spaces for {Artin} stacks},
     journal = {Annales de l'Institut Fourier},
     pages = {2349--2402},
     publisher = {Association des Annales de l{\textquoteright}institut Fourier},
     volume = {63},
     number = {6},
     year = {2013},
     doi = {10.5802/aif.2833},
     zbl = {06325437},
     mrnumber = {3237451},
     language = {en},
     url = {https://www.numdam.org/articles/10.5802/aif.2833/}
}

@article{Alper_Halpern-Leinstner_Heinloth_13,
     author = {Alper, Jarod and Halpern-Leistner, Daniel and Heinloth, Jochen},
     title = {Existence of moduli spaces for algebraic stacks},
     journal = {Inventiones mathematicae},
     pages = {949--1038},
     volume = {234},
     number = {3},
     year = {2023},
     doi = {10.1007/s00222-023-01214-4},
     url = {https://doi.org/10.1007/s00222-023-01214-4}
}

@InProceedings{Quillen_73,
author="Quillen, Daniel",
editor="Bass, H.",
title="Higher algebraic K-theory: I",
booktitle="Higher K-Theories",
year="1973",
publisher="Springer Berlin Heidelberg",
address="Berlin, Heidelberg",
pages="85--147",
isbn="978-3-540-37767-2"
}

@article{Heinloth_17,
author = {Heinloth, Jochen},
year = {2017},
month = {12},
pages = {},
title = {Hilbert-Mumford stability on algebraic stacks and applications to $\mathcal{G}$-bundles on curves},
volume = {Volume 1},
journal = {Épijournal de Géométrie Algébrique},
doi = {10.46298/epiga.2018.volume1.2062}
}

@article{Weissmann_Zhang_25,
author = {Weißmann, Dario and Zhang, Xucheng},
year = {2025},
month = {03},
pages = {262-298},
title = {A stacky approach to identifying the semistable locus of bundles},
journal = {Algebraic Geometry},
doi = {10.14231/AG-2025-008}
}

@unknown{Herrero_Weissmann_Zhang_25,
author = {Herrero, Andres and Weissmann, Dario and Zhang, Xucheng},
year = {2025},
month = {04},
pages = {},
title = {There are no exotic compact moduli of sheaves on a curve},
doi = {10.48550/arXiv.2504.11155}
}

@article{Langton_75,
 ISSN = {0003486X, 19398980},
 author = {Stacy G. Langton},
 journal = {Annals of Mathematics},
 number = {1},
 pages = {88--110},
 publisher = {[Annals of Mathematics, Trustees of Princeton University on Behalf of the Annals of Mathematics, Mathematics Department, Princeton University]},
 title = {Valuative Criteria for Families of Vector Bundles on Algebraic Varieties},
 volume = {101},
 year = {1975}
}

@article{Creutz_Viray_23,
author = {Creutz, Brendan and Viray, Bianca},
year = {2023},
month = {08},
pages = {1411-1452},
title = {Quadratic points on intersections of two quadrics},
volume = {17},
journal = {Algebra \& Number Theory},
doi = {10.2140/ant.2023.17.1411}
}

@misc{Huybrecht_25,
      title={The period-index problem for hyperk\"ahler manifolds}, 
      author={Daniel Huybrechts},
      year={2025},
      eprint={2411.17604},
      archivePrefix={arXiv},
      primaryClass={math.AG},
      url={https://arxiv.org/abs/2411.17604}, 
}

@article{Smith_06,
author = {Smith, Benjamin},
year = {2006},
month = {12},
pages = {},
title = {Explicit endomorphisms and Correspondences},
volume = {74},
journal = {Bulletin of The Australian Mathematical Society},
doi = {10.1017/S0004972700040521}
}

@book{Mumford_74,
  title={Abelian Varieties},
  author={Mumford, D.},
  series={Studies in mathematics},
  year={1974},
  publisher={Oxford University Press}
}

@misc{stacks-project,
  author       = {The {Stacks project authors}},
shorthand = {SP25},
  title        = {The Stacks project},
  howpublished = {\url{https://stacks.math.columbia.edu}},
  year         = {2025},
}

@article{Antieau_Auel_Ingalls_Krashen_Lieblich_19,
author = {Antieau, Benjamin and Auel, Asher and Ingalls, Colin and Krashen, Daniel and Lieblich, Max},
year = {2019},
month = {05},
pages = {},
title = {Period-index bounds for arithmetic threefolds},
volume = {216},
journal = {Inventiones mathematicae},
doi = {10.1007/s00222-019-00860-x}
}
\end{document}